\newfont{\bb}{msbm10 at 11pt}
\newfont{\bbsmall}{msbm8 at 8pt}
\def\rth{\mathbb{R}^3}
\def\R{\mathbb{R}}
\def\B{\mathbb{B}}
\def\N{\mathbb{N}}
\def\Hip{\mathbb{H}}
\def\D{\mathbb{D}}
\def\esf{\mathbb{S}}
\def\cL{\mathcal{L}}
\renewcommand{\S}{\Sigma}
\newcommand{\ben}{\begin{enumerate}}
\newcommand{\bit}{\begin{itemize}}
\newcommand{\een}{\end{enumerate}}
\newcommand{\eit}{\end{itemize}}
\newcommand{\wh}{\widehat}
\newcommand{\ov}{\overline}
\newcommand{\Int}{\mbox{Int}}
\newcommand{\wt}{\widetilde}
\def\a{{\alpha}}
\def\lc{{\cal L}}
\def\t{{\theta}}
\def\g{{\gamma}}
\def\G{{\Gamma}}
\def\l{{\lambda}}
\def\de{{\delta}}
\def\be{{\beta}}
\def\ve{{\varepsilon}}
\newcommand{\cS}{{\cal S}}
\newcommand{\cC}{{\cal C}}
\def\centerbmp#1#2#3{\vskip#2\relax\centerline{\hbox to#1{\special
    {bmp:#3 x=#1, y=#2}\hfil}}}
\newtheorem{theorem}{Theorem}[section]
\newtheorem{lemma}[theorem]{Lemma}
\newtheorem{proposition}[theorem]{Proposition}
\newtheorem{remark}[theorem]{Remark}
\newtheorem{corollary}[theorem]{Corollary}
\newtheorem{definition}[theorem]{Definition}
\newtheorem{conjecture}[theorem]{Conjecture}
\newtheorem{assertion}[theorem]{Assertion}
\newtheorem{claim}[theorem]{Claim}
\newtheorem{example}[theorem]{Example}
\newenvironment{proof}{\smallskip\noindent{\it Proof.}\hskip \labelsep}
{\hfill\penalty10000\raisebox{-.09em}{$\Box$}\par\medskip}
\begin{document}

\begin{title}
{Structure theorems for singular minimal laminations}
\end{title}
\vskip .5in

\begin{author}
{William H. Meeks III\thanks{This material is based upon work for
the NSF under Award No. DMS - 1309236. Any opinions, findings, and
conclusions or recommendations expressed in this publication are
those of the authors and do not necessarily reflect the views of the
NSF.} \and Joaqu\'\i n P\' erez \and Antonio Ros\thanks{The second and third authors
were supported in part by the MINECO/FEDER grant no. MTM2014-52368-P.}, }
\end{author}

\maketitle
\begin{abstract}
We apply the local removable singularity theorem for
minimal laminations~\cite{mpr10} and the local picture theorem on
the scale of topology~\cite{mpr14} to obtain two descriptive results
for certain possibly {\it singular minimal laminations} of $\R^3$.
These two global structure theorems will be applied
in~\cite{mpr8} to obtain bounds on the index
and the number of ends of complete, embedded minimal surfaces of
fixed genus and finite topology in $\R^3$, and in~\cite{mpr9} to prove that a complete, embedded
minimal surface in $\R^3$ with finite genus and a countable number
of ends is proper.
\vspace{.17cm}

\noindent{\it Mathematics Subject Classification:} Primary 53A10,
   Secondary 49Q05, 53C42

\noindent{\it Key words and phrases:} Minimal surface,
   stability,  curvature estimates, local picture,
minimal lamination, removable singularity,
minimal parking garage structure, injectivity radius, locally
simply connected.
\end{abstract}

\section{Introduction.}
The analysis of singularities of embedded minimal surfaces and more generally of
minimal laminations in three-manifolds is a transcendental
open problem in minimal surface theory. Theory developed by
Colding and Minicozzi~\cite{cm21,cm22,cm24,cm23,cm35,cm25}
and subsequent applications by
Meeks and Rosenberg~\cite{mr8,mr13} and Meeks, P\'erez and Ros~\cite{mpr3,mpr4,mpr6}
 demonstrate the importance of the analysis of singularities
of minimal laminations. Removable singularity theorems in~\cite{mpr10} have been instrumental
in obtaining classification results~\cite{mpr21} for CMC foliations of $\R^3$ and $\esf ^3$ with a countable
 set of singularities, in studying dynamical properties of
the space of properly embedded minimal surfaces in $\rth$~\cite{mpr20},
and in deriving local
pictures on the extrinsic geometry of an embedded minimal surface around points of arbitrarily
small injective radius~\cite{mpr14}.

In this paper we will improve the understanding of singularities of minimal laminations
in $\R^3$ with two new results on the global structure of these objects. In the first result,
Theorem~\ref{tttt}, we describe the possible limits (after extracting a subsequence) of a
sequence of embedded minimal surfaces with {\it locally positive injectivity radius}\footnote{See
Definition~\ref{deflpir} for this notion.} in the complement
of a countable closed set of $\R^3$. The second result, Theorem~\ref{global}, describes the
structure of a singular minimal lamination of $\R^3$ whose singular set is countable.
Both results depend on the local theory of embedded minimal surfaces
and minimal laminations developed in~\cite{mpr14,mpr20,mpr10}, and on the previously
mentioned work of Colding and Minicozzi.
For the definition and the general theory of
minimal laminations, see for instance~\cite{mpe2,mpr19,mpr18,mpr10,mr8,mr13}.

We next give a
formal definition of a singular lamination and of the set of
singularities associated to a leaf of a singular lamination.  Given
an open set $A\subset \R^3$ and a subset $B\subset A$, we will denote by
$\overline{B}^A$ the closure of $B$ in $A$. In the case $A=\rth$, we
simply denote $\overline{B}^{\rth}$ by $\overline{B}$.

\begin{definition}
\label{defsinglamin}
{\rm A {\it singular lamination} of an open set
$A\subset\R^3$ with {\it singular set} ${\cal S}\subset A$ is the
closure $\overline{\lc}^A$ of a lamination $\lc$ of $A - {\cal S}$,
such that for each point $p \in {\cal S}$, then  $p \in
\overline{\lc}^A$, and in every open neighborhood $U_p\subset A$ of
$p$, $\,\overline{\cal L}^A \cap U_p$ fails to have an induced
lamination structure in $U_p$. It then follows that ${\cal S}$ is
closed in $A$. The singular lamination $\overline{\cal L}^A$ is said
to be {\it minimal} if the leaves of the related lamination ${\cal
L}$ of $A-{\cal S}$ are minimal surfaces.

For a leaf $L$ of $\lc$, we call a point $p \in \overline{L}^A \cap
{\cal S}$ a {\em singular leaf point of $L$} if there exists an open set $V
\subset A$ containing $p$ such that $L \cap V$ is closed in $V - {\cal
S}$. We let ${\cal S}_L$ denote the {\em set of singular leaf points
of L.} Finally, we define
\begin{equation}
\label{eq:leaf}
\overline{\lc}^A (L) = L \cup {\cal S}_L
\end{equation}
to be the {\em leaf of $\overline{\lc}^A$ associated to the leaf $L$
of $\lc$}. In the case $A=\R^3$, we simply denote $\overline{\cal L}^A(L)$
by $\overline{\cal L}(L)$.
In particular, the leaves of $\overline{\lc}^A$ are of one of the following two types.
\begin{itemize}
\item If for a given leaf $L$ in ${\cal L}$ we have $\overline{L}^A\cap {\cal
S}=\mbox{\rm \O}$, then $L$ a leaf of $\overline{\cal L}^A$.

\item If for a given leaf $L$ in ${\cal L}$  we have $\overline{L}^A\cap {\cal
S}\neq \mbox{\rm \O}$, then  $\overline{\lc}^A(L)$ is a leaf of
$\overline{\lc}^A$.
\end{itemize}
Note that since ${\cal L}$ is a lamination of $A-{\cal S}$, then
$\overline{\cal L}^A= {\cal L}\cup 
{\cal S}$. Hence, the closure $\overline{\cal L}$ of ${\cal L}$
when considered to be a subset
of $\R^3$ is the set $\overline{\cal L}={\cal L}\cup
{\cal S}\cup
(\partial A \cap \overline{\cal L})$.
}
\end{definition}

In contrast to the behavior of (regular) laminations, it is possible
for distinct leaves of a singular lamination 
to intersect. In Section~\ref{sec12} we will give an example that illustrates this phenomenon.

\begin{definition}
{\rm
With the notation in Definition~\ref{defsinglamin},
a leaf $\overline{\cal L}{^A}(L)=
L\cup {\cal S}_L$
of $\overline{\cal L}^A$ is said to be a {\it limit leaf}
of $\overline{\cal L}^A$ if the related leaf $L\in {\cal L}$ is a limit leaf of ${\cal L}$
(i.e., there exists a point $p\in L$ that is a limit in $A$
of a sequence of points $p_n\in L_n$, where $L_n$ is a leaf of
$^{}{\cal L}$ for all $n$, and if $L_n=L$ after passing to
a subsequence, then the sequence
$p_n$ does not converge to $p$ in the intrinsic topology of $L$).
We will denote by Lim$(\overline{\cal L}^A)$ the set of limit leaves of
$\overline{\cal L}^A$.
}
\end{definition}

Throughout the paper, $\B (p,R)$ will denote the open Euclidean ball
of radius $R>0$ centered at a point $p\in \R^3$, $\B (R)=\B
(\vec{0},R)$, $\esf^2(p,R)=\partial\B(p,R)$ and
$\esf^2(R)=\esf^2(\vec{0}, R)$. For a surface $\Sigma \subset \R^3$,
$K_{\Sigma }$ will denote its Gaussian curvature function.

\begin{definition}
\label{deflpir}
{\rm
Let  $\{M_n\}_n$ be a sequence of surfaces (possibly with boundary)
in an open set $A \subset \R^3$. We will say that $\{M_n\}_n$ has
{\em locally positive injectivity radius in $A$,} if for every
$q\in A$, there exists $\ve _q>0$ and $n_q\in \N $ such that for
$n>n_q$, the restricted functions $(I_{M{_n}})|_{\B (q,\ve_q) \cap M_n}$ are
uniformly bounded away from zero, where $I_{M_{n}}$ is the injectivity
radius function of $M_n$.}
\end{definition}

Note that if the surfaces $M_n$ have boundary and $\{ M_n\} _n$ has locally
positive injectivity radius in $A$, then for any $p\in A$
there exists $\varepsilon _p>0$ and $n_p\in \N$ such that $\partial
M_n\cap \B (p,\ve _p)=\mbox{\rm \O}$ for  $n>n_p$, i.e., points in the boundary
of $M_n$ must eventually diverge in space or accumulate to points in the complement of $A$.

By Proposition 1.1 in Colding and Minicozzi~\cite{cm35}, the
property that a sequence of embedded minimal surfaces $\{M_n\}_n$
has locally positive
injectivity radius in an open set $A$ is equivalent to the property
that $\{M_n\}_n$ is {\it locally simply connected in $A$,} in the sense
that around any point $q\in A$, we can find $\de _q>0$ such that
$\B (q,\de_q)\subset A$ and for $n$ sufficiently large,  $\B (q,\de
_q)$ intersects $M_n$ in components that are disks with boundaries
in $\esf^2(q,\de _q)$.

\begin{theorem}
\label{tttt}
Suppose $W$ is a countable closed  subset of $\R^3$ and
$\{M_n\}_n$ is a sequence of embedded minimal surfaces (possibly
with boundary) in $A=\R^3-W$, that has locally positive injectivity
radius in $A$.  Then, there exist a closed subset ${\cal S}^A\subset
A$,  a minimal lamination ${\cal L}$ of $A-{\cal S}^A$ and a subset
$S({\cal L})\subset {\cal L}$ (in particular, $S({\cal
L})\cap {\cal S}^A =\O)$ such that after replacing by a subsequence,
$\{M_n\}_n$ converges $C^\a$, for all $\a \in (0,1)$, on compact subsets of $A-(S({\cal L})\cup
{\cal S}^A)$ to ${\cal L}$ (here $S(\cal L)$ is the singular
set of convergence\footnote{$S({\cal L})$ is the set of points $x\in {\cal L}$ such
that $\sup _{n\in \N}|K_{M_n\cap \B(x,\ve )}|$ is not bounded for any $\ve >0$.}
of $\{ M_n\} _n$ to ${\cal L}$), and the closure of
${\cal L}$ in $A$ has the structure of a possibly singular minimal
lamination of $A$ with singular set ${\cal S}^A$:
\[
\overline{\lc}^A={\cal L}\cup
{\cal S}^A.
\]
 Furthermore, the closure
$\overline{\lc}$ in $\R^3$ of $\lc$ has the structure
of a possibly singular minimal lamination of $\R^3$, with the
singular set ${\cal S}$ of $\overline{\lc}$ satisfying
${\cal S}^A\subset {\cal S}\subset {\cal S}^A\cup
(W\cap \overline{\cal L})$, and:
\begin{description}
\item[{\it 1.}]
\label{12.2.1}
The set ${\cal P}$ of planar leaves in
$\overline{\lc}$ forms a closed subset of $\R^3$.
\item[{\it 2.}]
\label{12.2.2} The set $\mbox{\rm Lim}(\overline{\cal L})$ of limit
leaves of $\overline{\lc}$ forms a closed set in $\R^3$ and satisfies
$\mbox{\rm Lim}(\overline{\cal L}) \subset {\cal P}$. Furthermore, if
$L=\overline{\cal L}(L_1)=L_1\cup {\cal S}_{L_1}$ is a leaf of $\overline{\cal L}$
(here $L_1$ is the related leaf of the regular lamination associated to $\ov{\cL}$,
see (\ref{eq:leaf})) and $A\cap {\cal S}_{L_1}\neq \mbox{\rm \O}$, then $L$
is a limit leaf of $\overline{\cal L}$. In particular, every singular leaf point
of a non-flat leaf of $\overline{\cal L}$ belongs to $W$.
\item[{\it 3.}]
\label{12.2.5} If $P$ is a plane in ${\cal P} - \mbox{\rm
Lim}(\overline{\cal L})$, then there exists $\delta >0$ such that
$P({\delta}) \cap \overline{\lc}=P$, where $P(\de )$ is the
$\delta$-neighborhood of $P$. In particular, ${\cal S}\cap [{ \cal
P}-\mbox{\rm Lim}(\overline{\cal L})]=\O$.
\item[{\it 4.}]
\label{12.2.3}
For each point $q\in {\cal S}^A\cup S(\lc)$, there passes
a plane $P_q\in \mbox{\rm Lim}(\overline{\cal L})$. Furthermore, $P_q$
intersects ${\cal S}^A\cup S(\lc) \cup W$ in a closed countable set.
\item[{\it 5.}]
\label{5tttt} Through each point of $p\in W\cap \overline{\cal L}$ satisfying one of the
conditions 5.1, 5.2 below, there passes a planar leaf $P_p$ in ${\cal P}$.
\begin{enumerate}[5.1.]
\item
\label{12.2.5A}
For all $k\in \N$, there exists $\ve_k\in (0,\frac1k)$ and an open subset $\Omega _k$ of $\B (p,\ve_k)$
such that $W\cap \B (p,\ve_k)\subset \Omega _k\subset \ov{\Omega}_k\subset \B (p,\ve_k)$
and the area of
$M_n\cap [\B (p,\ve_k)-\overline{\Omega }_k]$
 diverges to infinity as $n\to \infty $ (in this case, 
 the convergence of the $M_n$ to $P_p$ has infinite multiplicity).
\item
The convergence of the $M_n$ to some leaf of $\lc$
having $p$ in its closure is of finite multiplicity greater than one.
\end{enumerate}

\item[{\it 6.}]
\label{12.2.6} Suppose that there exists a leaf $L=L_1\cup{\cal S}_{L_1}$ of $
\overline{\lc}$ that is not contained in ${\cal P}$,
where $L_1$ is the related leaf of the regular lamination
${\cal L}_1:=\overline{\cal L}-{\cal S}$ of $\R^3-{\cal S}$ and
${\cal S}_{L_1}$ is the set of singular leaf points of $L_1$.
Then, $L\cap({\cal S}^A \cup S(\lc)) = \mbox{\rm \O}$ (note that $L$ might
contain singular points which necessarily belong to $W$), the convergence of
portions of the $M_n$ to $L_1$ is of multiplicity one, and one of the
following two possibilities holds:
\begin{enumerate}[6.1.]
\item
$L$ is  proper\footnote{As leaves of $\overline{\cal L}$
may have singularities, properness of such a leaf $L=L_1\cup {\cal S}_{L_1}$
just means that $L$ is a closed set of $\R^3$, or equivalently, ${\cal S}_{L_1}$ is closed in $\R^3$ and
$L_1$ is a proper surface in the complement of ${\cal S}_{L_1}$.} in $\R^3$, ${\cal S}={\cal S}_{L_1}\subset W$
and $L$ is the unique leaf of $\overline{\cal L}$.
\item
$L$ is not proper in $\R^3$ and ${\cal P}\neq \mbox{\rm \O}$.
In this case, $\overline{L}$
has the structure of a possibly singular minimal lamination of $\R^3$ with a countable
set of singularities, there exists a subcollection ${\cal P}(L)\subset
{\cal P}$ consisting of one or two planes such that $\overline{L}=L
\cup {\cal P}(L)$, $L$ is proper in a component $C(L)$ of $\R^3-{\cal P}(L)$ and
$C(L)\cap \overline{\cal L}=L$.
Furthermore:\ben[a.]
\item Every open $\ve$-neighborhood $P(\ve)$ of a plane $P\in {\cal
P}(L)$ intersects $L_1$ in a connected surface with unbounded Gaussian curvature.
\item   If some  $\ve$-neighborhood $P(\ve)$ of a plane $P\in {\cal
P}(L)$ intersects $L_1$ in a surface with finite genus, then  $P(\ve)$  is
disjoint from the singular set of $\ov{L}$.
\item  $L_1$ has infinite genus.
\een
\end{enumerate}
In particular, $\overline{\lc}$ is the disjoint union of its leaves,
regardless of which case 6.1 or 6.2 occurs (if case 6.2 occurs,
then each leaf of $\overline{\lc}$ is either a plane or a minimal
surface possibly with singularities in $W$, that is proper\footnotemark[2]
in an open halfspace or slab of $\R^3$).

\item[{\it 7.}]
\label{12.2.7}
Suppose that the surfaces $M_n$ have uniformly
bounded genus and ${\cal S} \cup S({\lc}) \neq \mbox{\rm \O}$. Then:
\begin{enumerate}[7.1.]
\item
\label{12.2.7A}
$\overline{\lc}={\cal P}$ and so, ${\cal S}=\mbox{\rm \O}$.
\item
\label{12.2.7B}
$\overline{\lc}$ contains a foliation ${\cal F}$ of an
open slab of $\R^3$ by planes and $S(\lc)\cap {\cal F}$
consists of one or two straight line segments orthogonal to the
planes in ${\cal F}$, where each line segment intersects every plane in
${\cal F}$. Furthermore, if there are 2 different line segments in
$S(\lc)\cap {\cal F}$, then in the related limit minimal parking garage structure
of the slab, the two multivalued graphs occurring inside the surfaces
$M_n$ along $S(\lc)\cap {\cal F}$ are oppositely handed.
\item
\label{12.2.7C}
If the $M_n$ are compact with
boundary, then $\overline{\lc}$ is a foliation of $\R^3$ by planes
and $\overline{S({\lc})}$ consists of one or two complete lines orthogonal to the planes in this foliation.
\end{enumerate}
\end{description}
\end{theorem}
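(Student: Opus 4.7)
The plan is to produce the candidate lamination by a diagonal compactness argument and then verify the seven structural conclusions roughly in their stated order, leaning throughout on the local picture theorem on the scale of topology~\cite{mpr14}, the local removable singularity theorem for minimal laminations~\cite{mpr10}, and the Colding--Minicozzi machinery for embedded minimal disks. First I would set $S(\cL)$ to be the points of $A$ at which $\sup_n |K_{M_n\cap \B(x,\varepsilon)}|$ is unbounded for every $\varepsilon>0$; since $\{M_n\}_n$ is locally simply connected in $A$ (by Proposition~1.1 of~\cite{cm35}), standard curvature estimates give, after passing to a subsequence, smooth $C^\alpha$ convergence on compact subsets of $A-S(\cL)$ to a minimal lamination of $A-S(\cL)$. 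I then take $\cL$ to be this lamination and let $S^A$ be the set of points of $A$ where its closure in $A$ fails to have a lamination structure; the local removable singularity theorem~\cite{mpr10} identifies $S^A$ with the genuine obstruction set, and the countability of $W$ together with closure operations in $\R^3$ yields $S^A\subset S\subset S^A\cup(W\cap\overline\cL)$.

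For conclusions 1--4, I use that limit leaves of a minimal lamination of an open subset of $\R^3$ are stable, hence by Bernstein-type results they are flat; this gives $\mathrm{Lim}(\overline\cL)\subset\cP$, and $\cP$ is closed by the maximum principle plus the $C^\alpha$ convergence. An isolated plane $P\in\cP-\mathrm{Lim}(\overline\cL)$ has an embedded one-sided neighborhood disjoint from the rest of $\overline\cL$, proving 3 and ruling out $\cS$ on such a plane. A singular leaf point of a non-flat leaf lying in $A$ would force, via the removable singularity theorem applied on a small ball, the leaf to extend smoothly and hence to be non-singular at that point; thus such singular leaf points must lie in $W$, which together with the limit-leaf stability proves 2. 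For 4, at each $q\in S^A\cup S(\cL)$ the local picture theorem on the scale of topology~\cite{mpr14} produces a blow-up limit that is a minimal parking garage or helicoid-type configuration, whose horizontal limit plane supplies a limit leaf $P_q\in\mathrm{Lim}(\overline\cL)$ through $q$. Countability of $P_q\cap(S^A\cup S(\cL)\cup W)$ comes from local finiteness of the singular set of convergence on a planar leaf together with the countability of $W$.

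Conclusion 5 requires more care. In case 5.1, the hypothesized area divergence on shrinking punctured balls around $p$, combined with the monotonicity formula, forces a limit stable leaf through $p$, which by 2 must be planar, and the infinite multiplicity follows from the area blow-up; the removable singularity theorem~\cite{mpr10} is invoked to see that the limit extends across $p$. In case 5.2, finite multiplicity greater than one on a limit leaf forces the existence, via positive Jacobi fields constructed from pairs of sheets, of a stable leaf through $p$, which again must be planar. For conclusion 6, assume $L=L_1\cup\cS_{L_1}$ is non-planar; then $L$ cannot meet $S^A\cup S(\cL)$ by 4, and the convergence is of multiplicity one (otherwise 5.2 would force $L$ planar). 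The proper/non-proper dichotomy is then standard: if $L$ is proper in $\R^3$ the classical halfspace theorem and the fact that every other leaf of $\overline\cL$ would be in its closure forces uniqueness, giving 6.1; if not, $\overline L - L$ must consist of limit leaves of $\overline\cL$, which by 2 are planes, and by the halfspace theorem there are at most two such planes bounding a slab or halfspace containing $L$. Items 6.a and 6.c follow from the Colding--Minicozzi one-sided curvature estimate (curvature blow-up near an accumulated plane produces topology), and 6.b from the removable singularity theorem~\cite{mpr10} applied on a neighborhood of finite genus.

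For conclusion 7, the uniform genus bound together with $\cS\cup S(\cL)\neq\O$ rules out non-planar leaves (otherwise 6.c would give infinite genus), so $\overline\cL=\cP$; the local picture theorem then realizes $\overline\cL$ near $S(\cL)$ as a limit minimal parking garage foliating an open slab, and the parking garage structure theorem says the singular set of convergence consists of one or two vertical line segments across the slab, with oppositely handed multivalued graphs in the two-column case. If the $M_n$ are compact with boundary, the locally positive injectivity radius hypothesis forces $\partial M_n$ to diverge in $\R^3$, so the slab foliation extends to all of $\R^3$ and the line segments extend to complete lines, giving 7.3. The hardest step I expect to be conclusion 6, specifically the simultaneous control of properness, multiplicity, and the location of singular leaf points under only the countability hypothesis on $W$; this is where the interplay between the removable singularity theorem, the one-sided curvature estimate, and halfspace-type rigidity must be managed most delicately.
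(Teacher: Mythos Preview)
Your outline has the right architecture and names the correct toolkit, but there are two genuine gaps.

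\textbf{Item 2 (limit leaves are planes).} You write that limit leaves are stable ``hence by Bernstein-type results they are flat.'' Stability alone is not enough: the limit leaf $L_1$ is a leaf of the regular lamination of $\R^3-\cS$, and $\cS$ need not be countable (it contains $\cS^A$, which is only known to be closed). So $L_1$ is not a priori complete, nor complete outside a countable set, and neither the classical Bernstein theorem nor Corollary~\ref{corrs} applies directly. The paper handles this by enlarging $L_A$ to a surface $\wt L_A$ (adjoining the removable punctures $D(q,*)$ coming from the Colding--Minicozzi local picture at points of $\cS^A$), and then proving a nontrivial claim that $\wt L_A$ is complete outside $W$; only then does Corollary~7.2 of~\cite{mpr10} yield flatness. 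Your sketch skips this step entirely.

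\textbf{Items 6.b and 6.c (infinite genus).} You write that 6.c follows from the one-sided curvature estimate (``curvature blow-up near an accumulated plane produces topology'') and 6.b from the removable singularity theorem ``applied on a neighborhood of finite genus.'' Neither of these works. Unbounded Gaussian curvature does not by itself force infinite genus---there are genus-zero surfaces with unbounded curvature accumulating on a plane---and the removable singularity theorem says nothing about genus. In the paper this is by far the longest part of the proof (Assertion~\ref{ass3.8}): one argues by contradiction, assumes $L_1(\ve)$ has finite genus near a singularity of $\overline L$ in $P$, uses Baire to find an isolated singularity, and then splits into two cases according to whether $I_{L_1}/|\cdot|$ is bounded away from zero. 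In case (E2-A) the local picture theorem on the scale of topology~\cite{mpr14} produces a catenoid, Riemann example, or two-column parking garage at small scales, and a delicate flux/parabolicity argument (building area-minimizing barriers, invoking Theorem~3.1 of~\cite{ckmr1}) gives a contradiction. In case (E2-B) one rescales and applies the already-proved parts of Theorem~\ref{tttt} to the blow-ups, proving (Claims~\ref{claim3.9} and~\ref{claim3.11}) that \emph{every} blow-up is the foliation of the upper halfspace with a single vertical singular axis; a Gauss--Bonnet/diameter argument then contradicts the Colding--Minicozzi spiraling obstruction (Corollary~1.2 of~\cite{cm26}). None of this is captured by your two clauses. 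You are right that item~6 is the hardest step, but the difficulty lies precisely in 6.b/6.c, not in the proper/non-proper dichotomy.

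Two smaller points: for item~4 you invoke the local picture theorem on the scale of topology, but what is actually used there is the Colding--Minicozzi local lamination description (D)--(D2) for locally simply connected sequences; the local picture theorem on the scale of topology enters only later, in case (E2-A) of item~6. And throughout items~3, 5, and 6 the paper repeatedly uses Proposition~\ref{propos1} (two leaves of a singular lamination with countable singular set cannot share a singular leaf point, proved via a Meeks--Yau barrier construction); this lemma is what makes the halfspace-type arguments and the disjointness of leaves go through in the singular setting, and it is absent from your sketch.
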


In item~7.2 of Theorem~\ref{tttt} we mentioned the ``related
limit minimal parking garage structure of the slab''; we refer the reader
to our paper~\cite{mpr14} for
the notion of limit minimal parking garage structure of $\rth$
(see Colding and Minicozzi~\cite{cm25} for a related discussion).
Limit minimal parking garage structures in~\cite{mpr14} are foliations of $\R^3$ by planes,
that appear as the limit outside a discrete set of lines orthogonal to the planes,
of certain sequences of embedded minimal surfaces that are uniformly
locally simply connected in $\R^3$.
The fact that the sequence $\{ M_n\} _n$
in Theorem~\ref{tttt} is only locally simply connected outside $W$ is what might
produce a foliated slab rather than the whole $\R^3$.
In spite of this problem that arises from
$W$, we feel that our language here appropriately describes the
behavior of the limit configuration, since if ${\cal F}$ is a union of planar leaves of
$\overline{\lc}$ that forms an open slab and ${\cal F}\cap S({\cal
L})\neq \mbox{\rm \O}$, then ${\cal F} \cap {\cal S} = \mbox{\O}$ and
for $n$ large, $M_n \cap {\cal F}$ has the appearance of a parking
garage surface on large compact domains of this open slab, away
from $W$.  In Example~\ref{expg} below we will exhibit
a parking garage structure of the upper halfspace of $\R^3$.

Regarding applications, Theorem~\ref{tttt} will be crucial in the proof of the following results:
\begin{enumerate}[(I)]
\item In Theorem~\ref{global} below we will
describe the structure of any singular minimal lamination $\overline{\lc}={\cal L}\cup
{\cal S}$ of $\R^3$ with countable singular set ${\cal S}$.
Roughly  speaking, either $\overline{\lc}$ consists of a single leaf which is
a properly embedded minimal surface (${\cal S}=\mbox{\O }$ in this case), or
 $\overline{\lc}$ consists of a closed family ${\cal P}$ of parallel planes
that contains all limit leaves of $\overline{\lc}$, together with non-flat
leaves in $\R^3-{\cal P}$, each of which has infinite genus,
unbounded Gaussian curvature and is properly embedded in an open slab or halfspace bounded by
one of two planes in ${\cal P}$, and limits to these planes (in particular, non-flat leaves are
not proper in $\R^3$).

\item
In~\cite{mpr8} we will apply
Theorem~\ref{tttt} to prove that for each $g\in \N \cup \{ 0\} $,
there exists a bound on the number of ends
of a complete, embedded minimal surface in $\R^3$ with finite
topology and genus at most $g$. This topological boundedness result
implies that the stability index of a complete, embedded minimal
surface of finite index in $\R^3$ has an upper bound that depends only on its
finite genus.

\item We will use Theorem~\ref{tttt} in~\cite{mpr9}
to show that a connected, complete, embedded minimal surface in
$\R^3$ with an infinite number of ends, finite genus and compact
(possibly empty) boundary, is proper if and
only if it has a countable number of limit ends, if and only if it
has one or two limit ends, and when the boundary of the proper surface is
empty, then it has exactly two limit ends (limit ends are the limit
points in the space of ends endowed with its natural topology).
Both (II) and (III) were announced a long time ago, but we found some 
problems in the original proof that have finally been resolved by applications
of results in the present paper. 
\end{enumerate}

Besides the above applications, Theorems~\ref{tttt} and~\ref{global}
provide geometrical insight for possibly resolving the
following fundamental conjecture, at least when the set ${\cal S}$ is countable.

\begin{conjecture}[Fundamental Singularity
   Conjecture]
\label{conjlamination} \mbox{}\newline Suppose ${\cal S} \subset
\R^3$ is a closed set whose one-dimensional Hausdorff measure is
zero. If $\lc$ is a minimal lamination of $\R^3 - {\cal S}$, then
the closure $\overline{\cal L}$ has the structure of a minimal
lamination of~$\R^3$.
\end{conjecture}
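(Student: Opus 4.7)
The plan is to reduce the conjecture to a purely local question: it suffices to show that every point $p\in {\cal S}$ admits an open neighborhood $U_p$ in $\R^3$ such that $\overline{\cal L}\cap U_p$ carries an induced lamination structure. I would work inside a small ball $\B(p,r)$ and split into two cases according to whether the Gaussian curvatures $K_{\cal L}$ of the leaves remain bounded near $p$ or not.

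First, suppose $\sup_{\B(p,r)\cap {\cal L}}|K_{\cal L}|<\infty$ for some $r>0$. In this situation the local removable singularity theorem of~\cite{mpr10} applies directly: its hypothesis is essentially that ${\cal S}$ have zero one-dimensional Hausdorff measure together with a local curvature bound on the leaves, so $\overline{\cal L}\cap \B(p,r)$ is already a regular minimal lamination of $\B(p,r)$. This reduces the problem to points $p\in {\cal S}$ at which there is a sequence $q_n$ in leaves of ${\cal L}$ with $q_n\to p$ and $|K_{\cal L}(q_n)|\to\infty$.

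For such a $p$ I would rescale by $\lambda_n:=\sqrt{|K_{\cal L}(q_n)|}$ and apply the local picture theorem on the scale of topology from~\cite{mpr14} to the rescaled leaves $\lambda_n({\cal L}-q_n)$, exploiting that the rescaled singular sets $\lambda_n({\cal S}-q_n)$ are closed and remain of zero $H^1$-measure (Hausdorff $1$-measure scales linearly). Along a subsequence this should produce a blow-up singular minimal lamination whose singular set is again $H^1$-null, and Theorem~\ref{tttt} applied to a diagonal sequence of leaves approximating this picture would then yield a singular minimal lamination of $\R^3$ whose structure is tightly constrained by Theorem~\ref{global}: either a single properly embedded non-flat minimal surface, or a closed family of parallel planes containing all limit leaves together with non-flat leaves properly embedded in slabs. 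Both alternatives are incompatible with the normalization $|K|=1$ at the rescaled basepoint together with the blow-down back to the germ at $p$, delivering the desired contradiction.

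The main obstacle is the passage from countable ${\cal S}$ (to which Theorem~\ref{global} applies verbatim) to general ${\cal S}$ of zero one-dimensional Hausdorff measure, because a blow-up limit of an uncountable $H^1$-null set may a priori accumulate a nontrivial singular set on which neither the local removable singularity theorem nor Theorem~\ref{global} can be applied without further work. A plausible route around this is to combine a fine covering argument for ${\cal S}$ (using that $H^1({\cal S})=0$ gives arbitrarily thin coverings by balls of small total diameter) with the monotonicity formula to control the areas of the $M_n$ in tubular neighborhoods of ${\cal S}$, thereby showing that the blow-up singular set remains Hausdorff $1$-null inside each limit leaf, after which a second application of the local removable singularity theorem of~\cite{mpr10} on the limit lamination closes the loop. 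Carrying out this capacity-style bootstrap is in my view the heart of the difficulty, and is precisely why the Fundamental Singularity Conjecture remains open in its full generality despite the strong structural results provided by this paper.
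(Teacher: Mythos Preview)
The statement you are trying to prove is a \emph{conjecture}, not a theorem: the paper does not contain a proof of it. The authors explicitly present Conjecture~\ref{conjlamination} as open and remark that Theorems~\ref{tttt} and~\ref{global} merely ``provide geometrical insight for possibly resolving'' it, at least in the countable case. So there is no ``paper's own proof'' to compare against, and indeed your final paragraph correctly acknowledges that the conjecture remains open.

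That said, your sketch has genuine gaps even as a program. In your bounded-curvature case you invoke the local removable singularity theorem of~\cite{mpr10} as if its hypothesis were ``${\cal S}$ has zero $H^1$-measure plus a curvature bound''; but the version quoted here (Theorem~\ref{tt2}) removes a \emph{single point}, and its extension in Corollary~\ref{corrs} removes a closed \emph{countable} set and only for stable leaves. There is no statement in this paper (nor, to my knowledge, in~\cite{mpr10}) that handles a general $H^1$-null closed set, so this step is precisely the conjecture restated in the easy case. Likewise, in the unbounded-curvature case you appeal to Theorems~\ref{tttt} and~\ref{global}, both of which require the exceptional set $W$ (respectively the singular set ${\cal S}$) to be countable; you cannot apply them to an $H^1$-null set without first proving something new. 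Your closing paragraph identifies exactly this obstruction, so what you have written is an honest outline of where the difficulty lies rather than a proof.
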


Since the union of a catenoid with a plane passing through its waist
circle is a singular minimal lamination of $\R^3$ whose singular set
is the intersecting circle, the above conjecture represents the
strongest possible removable singularity conjecture.

The paper is organized as follows. In Section~\ref{sec12} we
give examples of singular minimal laminations and
obtain some results to be used in the proof of the main
Theorem~\ref{tttt}; these auxiliary results are based on the
local removable singularity theorem~\cite{mpr10} and the stable limit leaf
theorem for the limit leaves of a minimal lamination~\cite{mpr19,mpr18}.
In Section~\ref{sec4} we prove Theorem~\ref{tttt}.
Section~\ref{secproofthm1.3} contains the statement
and proof of the application (I) (Theorem~\ref{global}) listed above.
In the final Section~\ref{sec7} we will describe the subsequential limit of a sequence $\{ M_n\} _n$ of compact
embedded minimal surfaces of genus at most $g\in \N \cup \{ 0\} $, with boundaries $\partial M_n$
diverging in $\R^3$, provided that the $M_n$ contain disks that converge
$C^2$ to a nonflat minimal disk: a subsequence converges
smoothly on compact subsets of $\R^3$ with multiplicity one to a connected
nonflat minimal surface of genus at most $g$ which is properly embedded
in $\R^3$, has bounded Gaussian curvature, and either it has finite total
curvature, or is a helicoid with handles or a two-limit-ended surface.

\section{Preliminaries.}
\label{sec12}
We start by giving an example of a singular minimal lamination whose leaves intersect.
We will use the notation introduced in Definition~\ref{defsinglamin}.
\begin{example}
\label{example2.1}
{\rm
The union of two transversal planes $\Pi _1,\Pi _2\subset \R^3$
is a singular lamination $\overline{\lc}$ of $A=\R^3$ with
singular set ${\cal S}$ being the line $\Pi _1\cap \Pi _2$.
In this example, Definition~\ref{defsinglamin} yields a related
lamination $\lc$ of $\R^3-{\cal S}$ with four leaves that are open
halfplanes in $\Pi _i-(\Pi _1\cap \Pi _2)$, $i=1,2$,
\and $\overline{\lc}$ has four leaves that are the
associated closed halfplanes that intersect along ${\cal S}$; thus,
$\overline{\lc}$ {\em is not} the disjoint union of its leaves:
every point in ${\cal S}$ is a singular leaf point of each of the
four leaves of ${\cal L}$.
}
\end{example}

In our second example, the leaves of the singular minimal lamination will not
intersect.
\begin{example}
\label{example1.3}
{\rm
Colding and Minicozzi~\cite{cm28} constructed a singular minimal lamination
$\overline{\lc}_1$ of the open unit ball $A=\B (1)\subset \R^3$ with
singular set ${\cal S}_1$ being the origin $\{ \vec{0}\} $; the
related (regular) lamination ${\cal L}_1$ of $\B (1)-\{ \vec{0}\} $
consists of three leaves, which are the punctured unit disk $\D -\{
\vec{0}\} =\{ (x_1,x_2,0)\ | \ 0<x_1^2+x_2^2<1\} $ and two
nonproper disks $L^+\subset \{ x_3>0\} $ and $L^-\subset
\{ x_3<0\} $ that spiral to
$\D -\{ \vec{0}\} $ from opposite sides, see Figure~\ref{fig1}.
In this case, $\vec{0}$ is a singular leaf point of
$\D -\{ \vec{0}\} $ (hence $\overline{{\cal L}_1}^A(\D -\{ \vec{0}\}
)$ equals the unit disk $\D $), but $\vec{0}$ is not a singular leaf
point of either $L^+$ or $L^-$ because $L^+\cap V$ fails to be
closed in $V-{\cal S}_1$ for any open set $V\subset \B (1)$
containing $\vec{0}$. Thus, $\overline{{\cal L}_1}^A(L^+)=L^+$ and
analogously $\overline{{\cal L}_1}^A(L^-)=L^-$. Hence,
$\overline{\lc}_1$ {\em is} the disjoint union of its leaves in this
case.
}
\end{example}
\begin{figure}
\begin{center}
\includegraphics[width=8.3cm]{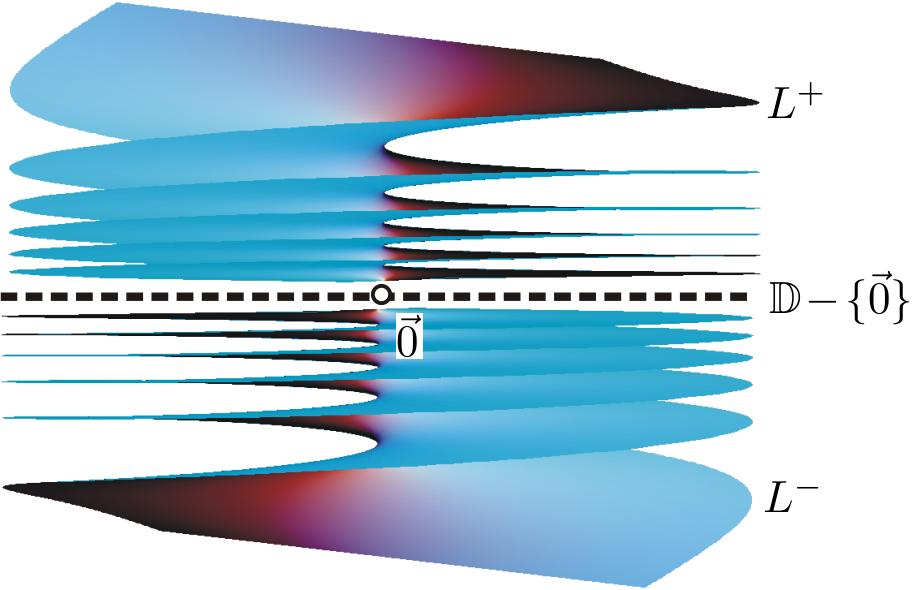}
\caption{The origin is a singular leaf point of the horizontal disk
passing through it, but not of the two nonproper spiraling leaves
$L^+,L^-$.}
 \label{fig1}
\end{center}
\end{figure}

Example~\ref{example1.3} is produced as a limit of a sequence of embedded minimal
surfaces. We will use this sequence to produce an example of a
parking garage structure of the upper halfspace of $\R^3$,
as announced right after the statement of Theorem~\ref{tttt}.

\begin{example}
\label{expg}
{\rm
Colding and Minicozzi~\cite{cm28} proved the existence of a sequence
$\{ D_n\} _n$ of compact minimal disks contained in the closed unit ball
$\overline{\B }(1)$ of $\R^3$, with $\partial D_n\subset \esf^2(1)$,
such that $\{ \Int (D_n)\} _n$ converges as $n\to \infty $ to the singular minimal
lamination $\overline{{\cal L}_1}=\lc_1\cup{\cal S}_1$ of ${\B }(1)$
that appears in Example~\ref{example1.3}.
By the local removable singularity theorem for minimal
laminations (see Theorem~1.1 in~\cite{mpr10} or see
Theorem~\ref{tt2} below), the Gaussian curvature function $K_{{\cal L}_1}$ of
${\cal L}_1$ satisfies that $|K_{{\cal L}_1}|R^2$ is unbounded in
arbitrarily small neighborhoods of $\vec{0}$, where
$R=\sqrt{x_1^2+x_2^2+x_3^2}$. Defining $\l _n=|p_n|^{-1/2}$,
where $p_n$ are points in $D_n$ such that $p_n\to \vec{0}$ and
$|K_{D_n}|(p_n)|p_n|^2\to \infty $, then the Gaussian curvature of the
homothetically expanded
disks $\l _nD_n$ blows up around $\vec{0}$. By Theorem~0.1 in Colding
and Minicozzi~\cite{cm23}, after passing to a subsequence, the $\l
_nD_n$ converge to a foliation ${\cal F}$ of $\R^3$ by planes (which
can be proved to be horizontal due to the properties of $D_n$
in~\cite{cm28}), with singular set of convergence $S({\cal F})$
being a transverse Lipschitz curve to the planes in the foliation,
which in fact is the $x_3$-axis by the $C^{1,1}$-regularity theorem of
Meeks~\cite{me25}). It then follows that $M_n=\l _nL^+\subset \{
x_3>0\} $ is a nonproper, embedded minimal disk, and the sequence
$\{ M_n\} _n$ has locally positive injectivity radius in $\R^3-\{
\vec{0}\} $ and converges in $\R^3$ minus the nonnegative $x_3$-axis
to the minimal lamination ${\cal L}$ of
$\R^3-\{ \vec{0}\}$ by all horizontal planes with positive
heights together with the $(x_1,x_2)$-plane
punctured at $ \vec{0}$, whose singular set of convergence $S({\cal L})$ is the
positive $x_3$-axis. In this case, $W=\{ \vec{0}\} $ and
$\overline{\cal L}$ is the foliation of the closed upper halfspace
of $\R^3$ by horizontal planes; in particular ${\cal S}=\mbox{\rm \O}$.
}
\end{example}

Conjecture~\ref{conjlamination} stated in the Introduction has a
global nature, because there exist interesting minimal laminations
of the open unit ball in $\R^3$ punctured at the origin that do not
extend across the origin, see Figure~\ref{fig1} and also see
Examples I and II in Section~2 in~\cite{mpr10}. In Example III of
Section~2 of~\cite{mpr10} we described a rotationally invariant
global minimal lamination of hyperbolic three-space $\Hip^3$, which
has a similar unique isolated singularity. The existence of this
global singular minimal lamination of $\Hip^3$ demonstrates that the
validity of Conjecture~\ref{conjlamination} must depend on the metric
properties of $\R^3$. However, in \cite{mpr21} and \cite{mpr10}, we obtained a
remarkable local removable singularity result, valid in any
Riemannian three-manifold~$N$ for certain possibly singular
laminations all whose leaves have the same constant mean curvature.
Since we will apply this theorem and a related corollary repeatedly
in the minimal case, we give their complete statements below in this
minimal case.

Given a three-manifold $N$ and a point $p\in N$, we will denote by
$B_N(p,r)$ the metric ball of center $p$ and radius $r>0$.

\begin{theorem} [Local Removable Singularity Theorem~\cite{mpr10}]
\label{tt2}
A minimal lamination $\lc$ of a punctured ball $B_N(p,r)-\{ p\} $
in a Riemannian three-manifold $N$ extends to a minimal lamination
of $B_N(p,r)$ if and only if there exists a positive constant $c$
such that $|\sigma _{\lc} |\, d < c$ in some subball centered at $p$,
where $| \sigma _{\lc}|$ is the norm of the second fundamental form of
the leaves of $\lc$ and $d$ is the distance function in $N$ to $p$.
\end{theorem}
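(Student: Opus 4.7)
The forward direction is immediate. If $\mathcal{L}$ extends to a minimal lamination $\overline{\mathcal{L}}$ of $B_N(p,r)$, then either $p$ is not in the closure of any leaf, so the inequality holds vacuously on a small ball about $p$; or $p$ lies on a smooth leaf of $\overline{\mathcal{L}}$, and the smooth lamination structure together with compactness of the transverse leaf space in a small neighborhood yields a uniform bound $|\sigma_{\overline{\mathcal{L}}}|\le C$ on some subball $B_N(p,r_1)$, whence $|\sigma_{\mathcal{L}}|\,d\le Cr_1$ there.

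For the converse, the strategy is a contradiction argument combining a scaling blow-up with the stable limit leaf theorem of~\cite{mpr19,mpr18}. Assume $|\sigma_{\mathcal{L}}|\,d<c$ on $B_N(p,r_0)\setminus\{p\}$ but $\mathcal{L}$ does not extend. If $|\sigma_{\mathcal{L}}|$ were bounded on some punctured ball about $p$, the classical bounded-curvature removable singularity theorem for isolated singularities of minimal surfaces, applied leaf-by-leaf within the local product structure of the lamination, would produce the extension; so $|\sigma_{\mathcal{L}}|$ must be unbounded near $p$. Select points $q_n\to p$ with $|\sigma_{\mathcal{L}}|(q_n)\,d(q_n,p)\ge c_0>0$ and set $\lambda_n=d(q_n,p)\to 0$. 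Working in geodesic normal coordinates centered at $p$, rescale by $1/\lambda_n$: the rescaled laminations $\widetilde{\mathcal{L}}_n$ live on punctured Euclidean balls of radii tending to infinity, the rescaled ambient metrics converge smoothly to the flat metric on $\R^3$, and the bound $|\widetilde\sigma|\,|x|<c$ is preserved. By standard compactness for minimal laminations with locally uniform curvature bounds on compact subsets of $\R^3\setminus\{0\}$, a subsequence converges smoothly to a minimal lamination $\mathcal{L}_\infty$ of $\R^3\setminus\{0\}$ satisfying $|\sigma_{\mathcal{L}_\infty}|\,|x|\le c$ and carrying a marked point $q_\infty$ on the unit Euclidean sphere where $|\sigma_{\mathcal{L}_\infty}|(q_\infty)\ge c_0$.

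The crux, and the expected main obstacle, is to rule out the existence of such a nontrivial $\mathcal{L}_\infty$. By the stable limit leaf theorem, every limit leaf of $\mathcal{L}_\infty$ is stable; combined with the quadratic decay $|K|\le (c/|x|)^2$, stability forces any such limit leaf to have finite total curvature and to be flat, hence a plane (through the origin, by the decay near $0$). Choosing the scales $\lambda_n$ in geometric progression (which the hypothesis allows by a slight adjustment of the $q_n$) gives $\mathcal{L}_\infty$ invariant under a homothety $x\mapsto \mu x$ for some $\mu\in(0,1)$; varying $\mu$ in further blow-ups yields invariance under a dense set of homotheties, hence under the whole homothety group. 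Scale invariance plus planarity of limit leaves plus the local product structure of a lamination then force \emph{every} leaf of $\mathcal{L}_\infty$ to be a plane through the origin, so $|\sigma_{\mathcal{L}_\infty}|\equiv 0$, contradicting $|\sigma_{\mathcal{L}_\infty}|(q_\infty)\ge c_0>0$. Consequently $|\sigma_{\mathcal{L}}|$ is bounded near $p$ after all, and the classical bounded-curvature extension completes the proof. The hard part is exactly this rigidity step: the pointwise bound $|\sigma|\,|x|\le c$ alone would a priori permit catenoidal or helicoidal leaves, and cooperating the stable limit leaf theorem with the scale invariance of the blow-up is precisely where the three-dimensional geometry carries the analysis beyond a purely analytic removable singularity statement.
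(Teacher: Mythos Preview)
The paper does not prove this theorem. Theorem~\ref{tt2} is stated in Section~\ref{sec12} with the attribution ``Local Removable Singularity Theorem~\cite{mpr10}'' and is quoted from that external reference precisely because the present paper applies it repeatedly; no proof appears here. There is therefore nothing in this paper to compare your argument against.

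As for the proposal on its own merits: the blow-up framework is the right idea, and the forward direction is fine, but the converse has genuine gaps. The most serious one is the homothety-invariance step. Choosing the scales $\lambda_n$ in geometric progression does \emph{not} force the subsequential limit $\mathcal{L}_\infty$ to be invariant under a fixed homothety: the limit depends on which subsequence you pass to, and nothing prevents different subsequences from yielding different limits, so the argument that ``varying $\mu$ in further blow-ups yields invariance under a dense set of homotheties'' is circular. Second, even if you had a scale-invariant $\mathcal{L}_\infty$ whose limit leaves are planes through the origin, you have not ruled out nonflat \emph{non}-limit leaves: for instance, a catenoid with waist circle on the unit sphere satisfies $|\sigma|\,|x|\le c$ in $\R^3\setminus\{0\}$ and is its own (trivial) lamination with no limit leaves at all, so the implication ``limit leaves planar $\Rightarrow$ all leaves planar'' fails without further input. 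Finally, the point-selection step is underspecified: from $|\sigma_{\mathcal{L}}|$ unbounded and $|\sigma_{\mathcal{L}}|\,d<c$ you cannot immediately pick $q_n$ with $|\sigma_{\mathcal{L}}|(q_n)\,d(q_n,p)\ge c_0>0$; one needs an almost-maximal point argument to guarantee both nontrivial curvature at the marked point and locally uniform curvature bounds on the rescaled laminations away from the origin. The actual proof in~\cite{mpr10} handles the rigidity of $\mathcal{L}_\infty$ by a different mechanism than homothety invariance.
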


The following result is a consequence of Theorem~\ref{tt2}; see
Corollary~7.1 in~\cite{mpr10} for a proof.

\begin{corollary}
\label{corrs}
Suppose that $N$ is a (not necessarily complete) Riemannian
three-manifold. If $W\subset N$ is a closed countable subset and
${\cal L}$ is a minimal lamination of $N-W$, then the closure of any
collection of its stable leaves extends across $W$ to a minimal
lamination of $N$ consisting of stable minimal surfaces. In
particular,
 \ben
\item The closure $\overline{\mbox{\rm Stab}(\lc)}$ in $N$ of the collection of stable
leaves of $\lc$ is a minimal lamination of $N$ whose leaves are
stable minimal surfaces.
\item  The closure $\overline{\mbox{\rm Lim}(\lc)}$ in $N$ of the sublamination
$\mbox{\rm Lim}(\lc)$ of limit leaves of $\lc$ is a sublamination
 of $ \overline{\mbox{\rm Stab}(\lc)}$.
\item If ${\cal L}$ is a minimal foliation of $N-W$, then ${\cal L}$
extends across $W$ to a minimal foliation of~$N$.
\een
\end{corollary}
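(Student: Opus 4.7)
The plan is to prove the main assertion — that for any subcollection $\lc' \subset \mbox{Stab}(\lc)$ of stable leaves the closure $\overline{\lc'}$ in $N$ is a minimal lamination of $N$ with stable leaves — and then derive the three numbered items as consequences. The two ingredients I would combine are the local removable singularity theorem (Theorem~\ref{tt2}) and the stable limit leaf theorem from~\cite{mpr19,mpr18}.

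First, the restriction $\overline{\lc'}|_{N-W}$ is automatically a minimal lamination of $N-W$ whose leaves are either original leaves of $\lc'$ (stable by hypothesis) or limits of such, and the limits are themselves stable by the stable limit leaf theorem. Stability in turn yields a Schoen-type curvature estimate of the form $|\sigma_{\lc'}|(q)\, d_N(q,W) \leq c$ in a small neighborhood of each point of $W$, where $c$ depends only on the local ambient geometry. This is exactly the hypothesis of Theorem~\ref{tt2} when the singular set contains only one point, so at any \emph{isolated} singular point one can extend the lamination across immediately.

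The remaining task is to promote this one-point removability to the whole countable closed set $W$. I would do this by transfinite induction on the Cantor--Bendixson rank of $W$: since $W$ is countable and closed, the derived-set filtration $W \supset W^{(1)} \supset W^{(2)} \supset \dots$ terminates at the empty set at some countable ordinal. At each successor stage, a point that is isolated in the residual singular set sits in a small metric ball containing no other singular point; Theorem~\ref{tt2} combined with the curvature bound extends the lamination across it, and the newly added leaf is a limit of stable leaves, hence stable by the stable limit leaf theorem. At limit stages, compatibility is automatic because each point of $W$ is isolated in its own residual set at some strictly lower level. After all countably many ranks are exhausted, one obtains a minimal lamination of $N$ extending $\lc'$ whose new leaves are stable. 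The main obstacle I expect is precisely the bookkeeping for this transfinite induction, so that the curvature bound survives each stage; this is why I would carry along the full collection $\mbox{Stab}(\lc)$ throughout, rather than trying to extract or extend individual candidate leaves one at a time.

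With the main claim in hand, the three items follow quickly. Part~1 is obtained by taking $\lc' = \mbox{Stab}(\lc)$. Part~2 follows because the stable limit leaf theorem gives $\mbox{Lim}(\lc) \subset \mbox{Stab}(\lc)$, so $\overline{\mbox{Lim}(\lc)}$ is a sublamination of $\overline{\mbox{Stab}(\lc)}$. For part~3, every leaf of a foliation is a limit leaf and hence stable, so the main claim applied to $\lc' = \lc$ produces an extended minimal lamination $\overline{\lc}$ of $N$; any open gap in this extension would be an open subset of $W$, contradicting countability of $W$, so the extension is in fact a foliation of $N$.
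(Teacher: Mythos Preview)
The paper does not actually prove Corollary~\ref{corrs}; immediately before its statement it says ``see Corollary~7.1 in~\cite{mpr10} for a proof,'' so there is no in-paper argument to compare against. Your outline is the standard one and is essentially correct: stability gives Schoen-type curvature estimates $|\sigma_{\lc'}|\, d_N(\cdot,W)\leq c$, which feeds directly into Theorem~\ref{tt2} at each isolated singular point, and transfinite induction on the Cantor--Bendixson rank of the closed countable set $W$ propagates the extension through all of $W$; the stable limit leaf theorem guarantees that the stability hypothesis (and hence the curvature bound) is preserved at every stage. Your derivations of items~1--3 from the main claim are also correct. One small point worth making explicit in a full write-up: the Schoen estimate is intrinsic, so you should remark that an extrinsic ball of radius $\rho$ about $q\in \lc'$ that misses the current singular set forces the leaf through $q$ to contain a stable disk of intrinsic radius at least $\rho$, from which the needed bound $|\sigma|\cdot d_N\leq c$ follows.
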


Theorems~\ref{tttt}
and~\ref{global} deal with the structure of
certain possibly singular minimal laminations of
$\R^3$.
In both results, the singular laminations can be expressed as a disjoint
union of its possibly singular  minimal leaves (see the last
statement of item~6 of Theorem~\ref{tttt} and of item~5 of Theorem~\ref{global}).
The key result for proving this disjointness
property of leaves is the next proposition; it gives a condition
under which two different leaves of a singular minimal lamination
cannot share a singular leaf point.

\begin{proposition}
\label{propos1} Let $\overline{\cal L}^A$ be a singular minimal lamination
of an open set $A\subset \R^3$, with countable singular set ${\cal S}$ and
related (regular) lamination ${\cal L}$ of $A-{\cal S}$.
Then, any singular point is a singular leaf point
of at most one leaf of $\overline{\cal L}^A$.
\end{proposition}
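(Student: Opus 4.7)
I would argue by contradiction: suppose a point $p\in{\cal S}$ is a singular leaf point of two distinct leaves $L_1, L_2$ of $\cL$. By Definition~\ref{defsinglamin}, pick open neighborhoods $V_1, V_2\subset A$ of $p$ such that $L_i\cap V_i$ is closed in $V_i-{\cal S}$ for $i=1,2$, and set $V=V_1\cap V_2$. Then $L_1\cap V$ and $L_2\cap V$ are disjoint closed subsets of $V-{\cal S}$, each having $p$ in its $V$-closure, and neither accumulates on the other inside $V-{\cal S}$ (for this would violate closedness in $V-{\cal S}$). Consequently the two-leaf family $\widetilde{\cL}:=\{L_1\cap V,\,L_2\cap V\}$ forms a minimal lamination of $V-T$, where $T:=\overline{L_1\cup L_2}^{\,V}\cap {\cal S}$ is a countable subset of ${\cal S}$ containing $p$.

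The plan is to extend $\widetilde{\cL}$ across $p$ to a regular minimal lamination of some metric ball $B_N(p,r)\subset V$ via the Local Removable Singularity Theorem~\ref{tt2}, and then to derive a contradiction from the local product structure at $p$. The key step is verifying the curvature bound $|\sigma_{\widetilde{\cL}}|\cdot d(\cdot,p)<c$ on some subball around $p$. Suppose instead that $|\sigma_{\widetilde{\cL}}|(x_n)\,d(x_n,p)\to\infty$ along a sequence $x_n\to p$, say with $x_n\in L_1$. Then by the local picture analysis developed in~\cite{mpr14} together with Colding-Minicozzi theory, $L_1$ exhibits a nontrivial Colding-Minicozzi local picture at $p$, and its accumulation set in a neighborhood of $p$ contains a positive-dimensional portion of a minimal surface $\Sigma$ passing through $p$. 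Since $\Sigma$ is uncountable while ${\cal S}$ is countable, $\Sigma$ cannot lie entirely in ${\cal S}$; so $L_1$ accumulates on a leaf of $\cL$ distinct from itself inside $V-{\cal S}$, contradicting the hypothesis that $L_1\cap V$ is closed in $V-{\cal S}$. The same reasoning applies to $L_2$, so the desired curvature bound holds. Applying Theorem~\ref{tt2}, iterated across the remaining countably many singularities in $T\cap B_N(p,r)$ from outer to inner, yields a regular minimal lamination $\widetilde{\cL}^*$ of $B_N(p,r)$ that extends $\widetilde{\cL}$.

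Being regular, $\widetilde{\cL}^*$ admits a local product structure at $p$: in suitable coordinates $B_N(p,r)\cong D^2\times K$ for a closed $K\subset\R$, the leaves of $\widetilde{\cL}^*$ near $p$ are of the form $D^2\times\{t\}$, and $p$ corresponds to $(0,t_0)$ for some $t_0\in K$. Any sequence in $L_i$ converging to $p$ has constant transverse coordinate along the single leaf $L_i$, and convergence to $p$ forces that coordinate to equal $t_0$; hence $L_i$ coincides near $p$ with the leaf $D^2\times\{t_0\}$ through $p$. This gives $L_1=L_2$ in a neighborhood of $p$, contradicting the distinctness of $L_1$ and $L_2$ as leaves of $\cL$. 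The main obstacle in the proof is the curvature bound in the second paragraph, which is precisely where the countability of ${\cal S}$ is used in an essential way: it is what prevents a (necessarily uncountable) limit minimal surface from being swallowed by the singular set.
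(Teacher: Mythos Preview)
Your argument has a genuine gap at the curvature bound step, which is the heart of your approach. You claim that if $|\sigma_{\widetilde{\cL}}|(x_n)\,d(x_n,p)\to\infty$ along $x_n\in L_1$, then Colding--Minicozzi theory and the local picture analysis of~\cite{mpr14} force $L_1$ to accumulate on a minimal surface $\Sigma$ through $p$. But Colding--Minicozzi theory requires topological control---local simple connectivity, bounded genus, or a positive lower bound on injectivity radius---on the surfaces in question, and you have none of that for the single leaf $L_1$ of an arbitrary minimal lamination. Nothing in the hypotheses prevents $L_1$ from having, say, infinite genus accumulating at $p$, in which case scale-invariant curvature can blow up without producing any limiting minimal disk on which $L_1$ accumulates. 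The results in~\cite{mpr14} likewise presuppose injectivity-radius or finite-topology hypotheses that are absent here. So the implication ``curvature blowup $\Rightarrow$ accumulation on a limit surface'' is unjustified. A secondary issue is your extension ``from outer to inner'' across the countable set $T$: Theorem~\ref{tt2} removes a single isolated singularity, and iterating it over a countable closed set (which may have no isolated points near $p$) requires an argument you have not supplied; Corollary~\ref{corrs} does this for \emph{stable} leaves, but $L_1,L_2$ are not known to be stable.

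The paper's proof avoids both problems by never attempting to extend $L_1$ or $L_2$ themselves. Instead it uses $\partial L_1(\ve)$ and $\partial L_2(\ve)$ as Plateau boundary data and $L_1,L_2$ as barriers (Meeks--Yau) to produce area-minimizing, hence \emph{stable}, surfaces $\Sigma_1,\Sigma_2$ sandwiched between $L_1(\ve)$ and $L_2(\ve)$, each with $p$ in its closure. Stability is exactly what makes Corollary~\ref{corrs} applicable, so $\Sigma_1,\Sigma_2$ extend smoothly across the countable set ${\cal S}$; they then touch at $p$, contradicting the interior maximum principle. The moral is that the removable-singularity machinery is available for stable surfaces, and the trick is to manufacture stable surfaces via a barrier/Plateau construction rather than to control the original (possibly wild) leaves directly.
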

\begin{proof}
Reasoning by contradiction, suppose that $p\in {\cal S}$ is a singular
leaf point of two different leaves $\overline{\cal L}^A(L_1)$, $\overline{\cal L}^A(L_2)$ of
$\overline{\cal L}^A$, associated to leaves $L_1,L_2$ of ${\cal L}$
(with the notation in (\ref{eq:leaf})).
By definition, $p\in \overline{L_1}^A\cap \overline{L_2}^A$ and
there exists a ball $\B (p,2\ve )\subset A$  such that $L_i\cap \B
(p,2\ve )$ is closed in $\B (p,2\ve )-{\cal S}$, $i=1,2$. Since ${\cal S}$
is countable, we may assume that the sphere $\esf^2(p,\ve )$
is disjoint from~${\cal S}$ and intersects transversely
$L_1\cup L_2$. Next define $L_i(\ve )=L_i\cap \overline{\B
}(p,\ve )$, $i=1,2$. Then $L_1(\ve ),L_2(\ve )$ are disjoint,
properly embedded minimal surfaces in $\overline{\B }(p,\ve )-{\cal S}$.
We will obtain a contradiction after replacing each $L_i(\ve )$
by a component of it having $p$ in its closure (we will
use the same notation $L_i(\ve )$ for this component); hence we
will assume from now on that $L_i(\ve )$ is connected, $i=1,2$.
Since ${\cal S}$ is countable and closed, $\overline{\B}(p,\ve )-
{\cal S}$ is a simply connected three-manifold with boundary.
Hence, $\overline{L_1(\ve )}$
separates $\overline{\B }(p,\ve )$ into two
connected components, and the same
holds for $\overline{L_2(\ve )}$. Let $N$ be the closure of the
component of $\overline{\B }(p,\ve )-(\overline{L_1(\ve )}\cup
\overline{L_2(\ve )})$ that contains both $\overline{L_1(\ve )},
\overline{L_1(\ve )}$ in its boundary.

Using $\partial N$ as a barrier for solving Plateau problems in $N$,
then from a compact exhaustion of $\overline{L_1(\ve
)}-{\cal S}$, we produce a properly  embedded, area-minimizing varifold
$\Sigma _1\subset N-{\cal S}$ with $\partial \Sigma _1=\partial
L_1(\ve )$ and  $p\in \overline{\Sigma}_1$ and that separates
$\overline{L_1(\ve )}$ from
$\overline{L_2(\ve )}$
(see Meeks and
Yau~\cite{my2} for similar construction and a description of
this barrier type construction). By regularity properties of
area-minimizing varifolds, $\overline{\Sigma}_1$ is regular except
possibly at points in $\overline{\Sigma}_1\cap{\cal S} $.  Now
consider $\overline{\Sigma}_1-{\cal S}$ to lie in $\overline{\B
}(p,\ve )-{\cal S}$ and so, $\overline{\Sigma}_1-{\cal S}$
represents a minimal lamination of $\B(p,\ve)-{\cal S}$ with stable
leaves. Since ${\cal S}$ is closed and countable, Corollary~\ref{corrs}
implies that $\Sigma _1$ extends smoothly
across ${\cal S}\cap \B (p,\ve )$. Exchanging $\overline{L_1(\ve )}$
by $\overline{\Sigma} _1$ and reasoning analogously, we find an
embedded, area-minimizing surface $\Sigma _2$ between
$\overline{\Sigma} _1$ and $\overline{L_2(\ve )}$, with $\partial
\Sigma _2=\partial L_2(\ve )$, such that $p\in\overline{\Sigma} _2$
and $\overline{\Sigma}_2 $ is smooth. Clearly $\overline{\Sigma}
_1,\overline{\Sigma}_2 $ contradict the interior maximum principle
at $p$, which  proves the proposition.
\end{proof}
Using similar arguments, we can extend Proposition~\ref{propos1} to
the case of a general Riemannian three-manifold (for the proof to
work and using the same notation as above, we also need the part of
the boundary of $N$ coming from the boundary of $\B (p,\ve )$ to have
positive mean curvature, which can be assumed by choosing $\ve $ small enough). The
following result is a consequence of this
generalization.
\begin{corollary}
 \label{corol2.3}
 Let $\overline{B}_N(p,R)$ be a compact ball
centered at a point $p$ in a Riemannian three-manifold $N$, with
radius $R>0$. Suppose $M_1,M_2\subset \overline{B}_N(p,R)-\{ p\} $
are two disjoint, properly embedded minimal surfaces with boundaries
$\partial M_i\subset \partial B_N(p,R)$, $i=1,2$. Then, at most
one of these surfaces is noncompact, and in this case the noncompact component has just
one end. Furthermore, if $M$ is a  properly embedded, smooth minimal
surface of finite genus in $\overline{B}_N(p,R)-\{ p\} $ with
$\partial M\subset \partial B_N(p,R)$, then its closure
$\overline{M}$ is a smooth, compact, embedded minimal surface in $\overline{B}_N(p,R)$.
\end{corollary}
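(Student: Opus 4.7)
The plan is to derive all three statements from Proposition~\ref{propos1} (in the Riemannian generalization indicated immediately before the corollary), together with the local removable singularity theorem (Theorem~\ref{tt2}) and the stable extension result (Corollary~\ref{corrs}).

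For the first assertion, I would argue by contradiction, assuming both $M_1$ and $M_2$ are noncompact. Since each $\partial M_i\subset\partial B_N(p,R)$ is compact and $M_i$ is proper in $\overline{B}_N(p,R)-\{p\}$, noncompactness forces $p\in\overline{M_1}\cap\overline{M_2}$. Restricted to the open ball, $M_1\cup M_2$ is a regular minimal lamination $\mathcal{L}$ of $B_N(p,R)-\{p\}$ whose closure in $B_N(p,R)$ fails to admit a lamination structure near $p$, so $\mathcal{L}\cup\{p\}$ is a singular minimal lamination with singular set $\{p\}$. For any sufficiently small open ball $V$ around $p$, properness gives that $M_i\cap V$ is closed in $V-\{p\}$, so $p$ is a singular leaf point of the leaf corresponding to each $M_i$; this contradicts the Riemannian generalization of Proposition~\ref{propos1}, whose mean-curvature hypothesis on $\partial V$ is automatic for small geodesic balls.

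For the one-end assertion, suppose the unique noncompact surface $M$ has at least two ends. I would choose a compact $K\subset M$ with $\partial M\subset K$ such that $M-K$ has two unbounded components $E_1,E_2$; by properness of $M$ in $\overline{B}_N(p,R)-\{p\}$, each $E_i$ accumulates only at $p$. Pick $r>0$ so small that $\overline{B}_N(p,r)\cap K=\varnothing$, and let $E_i'$ be a connected component of $E_i\cap\overline{B}_N(p,r)$ whose closure contains $p$. Then $E_1'$ and $E_2'$ are disjoint, noncompact, properly embedded minimal surfaces in $\overline{B}_N(p,r)-\{p\}$ with boundaries on $\partial B_N(p,r)$, contradicting the first assertion applied inside $\overline{B}_N(p,r)$. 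For the smooth-extension assertion, if $p\notin\overline{M}$ then $\overline{M}=M$ is already compact and smooth; otherwise $M$ is noncompact and, by the previous two assertions combined with finite genus, its unique end at $p$ is topologically an annulus. By Theorem~\ref{tt2} it suffices to bound $|\sigma_M|\,d$ in a neighborhood of $p$, where $d$ denotes distance to $p$. My plan is to sandwich $M$ near $p$ by stable minimal surfaces: for small $r$, $E_r=M\cap B_N(p,r)$ together with $\{p\}$ is a topological disk separating $B_N(p,r)$ into two $3$-balls $U,V$, and in each of $\overline{U},\overline{V}$ I would perform the compact-exhaustion / Plateau construction from the proof of Proposition~\ref{propos1} to produce stable area-minimizing surfaces $\Sigma_U,\Sigma_V$ whose closures contain $p$. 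Corollary~\ref{corrs} applied with $W=\{p\}$ then shows each extends smoothly across $p$; curvature estimates for stable minimal surfaces, transferred via a one-sided comparison between $M$ and each of $\Sigma_U,\Sigma_V$, give the required bound on $|\sigma_M|\,d$, and Theorem~\ref{tt2} concludes the smooth extension of $\overline{M}$ at $p$.

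The main obstacle will be forcing the Plateau minimizers to accumulate at $p$ rather than collapsing to a spherical cap on $\partial B_N(p,r)$, since $\partial E_r$ bounds a disk on the sphere and naive area minimization would retract there. The remedy is exactly the nested exhaustion device used in the proof of Proposition~\ref{propos1}, in which $\Sigma_1$ is forced to satisfy $p\in\overline{\Sigma}_1$ by minimizing sequentially relative to the boundary of a compact exhaustion of $\overline{L_1(\varepsilon)}-\mathcal{S}$; this should carry over verbatim, with $E_r$ playing the role of $\overline{L_1(\varepsilon)}$. Once the stable accumulation at $p$ is secured, Corollary~\ref{corrs} and the one-sided curvature comparison with $\Sigma_U,\Sigma_V$ deliver the bound $|\sigma_M|\,d\le C$ on a punctured neighborhood of $p$, completing the argument through Theorem~\ref{tt2}.
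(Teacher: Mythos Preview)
Your arguments for the first two assertions are correct and coincide with the paper's proof. The difficulty is in the smooth--extension assertion, where your plan has a genuine gap.

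The sandwiching construction does not go through. In the proof of Proposition~\ref{propos1} the minimizer $\Sigma_1$ is forced to satisfy $p\in\overline{\Sigma}_1$ because it lives in the region $N$ between $L_1(\ve)$ and $L_2(\ve)$ and \emph{both} of these surfaces accumulate at $p$; the region $N$ pinches down to $p$, and the separating property of $\Sigma_1$ traps it there. In your setup there is only the single barrier $E_r$: the region $\overline{U}$ on one side of $E_r\cup\{p\}$ is a topological $3$--ball that does not pinch at $p$, so an area minimizer in $\overline{U}$ with boundary $\partial E_r$ may simply be a compact surface (for instance a spherical cap) that avoids $p$ entirely. Exhausting $E_r$ by compact annuli whose inner boundary circles shrink to $p$ does not help: those inner boundaries can be capped off by tiny disks whose area tends to zero, and in the limit the minimizer need not accumulate at $p$. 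Without a second surface approaching $p$ from inside $U$, nothing forces $p\in\overline{\Sigma_U}$.

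Even if $\Sigma_U,\Sigma_V$ could be produced, the step ``curvature estimates for stable minimal surfaces, transferred via a one-sided comparison, give $|\sigma_M|\,d\le C$'' is not available: the Colding--Minicozzi one-sided curvature estimates require the sandwiched surface to be a \emph{disk}, and fail for annuli (thin catenoidal necks in a slab can have arbitrarily large curvature). A correct conclusion from the sandwich would instead be that the smooth extensions of $\Sigma_U$ and $\Sigma_V$ touch at $p$ and hence coincide there by the maximum principle, forcing $M$ onto a single smooth sheet --- but this still presupposes the unproven construction above.

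The paper takes an entirely different route for this part: after reducing to the case of a single annular end at $p$, it invokes Harvey--Lawson to obtain finite area, Allard's monotonicity formula to obtain at most quadratic extrinsic area growth, the conformal change $g_1=d^{-2}g$ to produce a complete annulus of linear area growth and hence (Grigor'yan) conformally a punctured disk, and finally extends the associated conformal harmonic map across the puncture (Gr\"uter) to a branched minimal immersion whose branch point is ruled out by embeddedness.
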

\begin{proof}
Let $M_1,M_2$ be surfaces as in the statement of the corollary. If
both surfaces are noncompact, then $\overline{M_1}\cup
\overline{M_2}$ forms a singular minimal lamination ${\cal L}$ of
$\overline{B}_N(p,R)$ with $p$ as the only singular point of ${\cal
L}$. By definition, $p$ is a singular leaf point of both $M_1$ and
$M_2$, which contradicts Proposition~\ref{propos1} in the general
Riemannian manifold setting (note that it suffices to find a contradiction
in a sufficiently small ball centered at $p$, hence we can assume
convexity for the boundary of this smaller ball).
By applying the same argument in a
smaller ball centered at $p$, we deduce directly that if $M_1$ is
not compact, then $M_1$ has only one end.

Finally, consider a properly embedded, smooth minimal surface
$M$ of finite genus in $\overline{B}_N(p,R)-\{ p\} $
with $\partial M\subset \partial B_N(p,R)$, and suppose that
$M$ is noncompact. We may assume, by passing to a
smaller $R>0$ and using the arguments in the previous
paragraph, that $M$ has just one end and that
$M$ is an annulus. We can also assume that
the exponential map $\exp _p$ yields $\R^3$-coordinates on
$B_N(p,r)$ centered at $p \equiv \vec{0}$, for $r>0$
small enough. Since $M$ is a locally rectifiable 2-dimensional
varifold with bounded (actually zero) mean curvature,
Theorem~3.1 in Harvey and Lawson~\cite{hl1} implies that
$M$ has finite area. Under this finiteness condition, Allard
proved (\cite{al1}, Section 6.5) the existence of minimal
limit tangent cones of $M$ in $\R^3$ at the origin after
homothetic rescaling of coordinates. By Corollary~5.1(3)
of~\cite{al1}, $M$ satisfies a monotonicity formula for the
extrinsic area (even in this Riemannian setting,
see Remark~4.4 in~\cite{al1}), valid for surfaces with
bounded mean curvature. In the present setting that $M$
has mean curvature zero, Allard's monotonicity formula
implies that with respect to the metric $g_M$ on $M$ induced by
the ambient metric $g$ on $N$, $(M,g_M)$ has at most quadratic extrinsic area growth,
in the sense that
\[
r\in (0,R)\mapsto r^{-2}\mbox{Area}\left(
M\cap \overline{B}_N(p,r),g_M\right)\quad \mbox{is bounded.}
\]
Now consider the ambient conformal change of metric
$g_1=\frac{1}{d^2}g$, where $d$ denotes the distance function in $(N,g)$ to $p$.
Then, $(M,g_1|_M)\subset \left( \overline{B}_N(p,R)-\{ p\} ,g_1
\right) $ is a complete annulus with linear area growth and compact boundary.
Such a surface is conformally a
punctured disk $\D ^*$ (see  Grigor'yan~\cite{gri1}). Thus, the
related conformal harmonic map of  $\D ^*$ extends to a harmonic map
on the whole disk $\D $, that gives rise to a conformal, branched
minimal immersion defined on $\D$ (see e.g., Gr\"{u}ter~\cite{gr1}).
Since $M$ is embedded near $p$, then $p$
cannot be a branch point; hence $M$ extends across $p$ to a smooth,
compact, embedded minimal surface. This finishes the proof of the
corollary.
\end{proof}

\section{The proof of Theorem~\ref{tttt}.}
\label{sec4}
Suppose $W$ is a closed countable  subset of $\R^3$ and
$\{M_n\}_n$ is a sequence of embedded minimal surfaces (possibly
with boundary) in $A=\R^3-W$, such that $\{ M_n\}_n$ has locally positive injectivity
radius in $A$.
We will first produce the possibly singular limit lamination
$\overline{\cal L}^A$ that appears in Theorem~\ref{tttt}.
If the $M_n$ have uniformly locally bounded
curvature in~$A$,
then it is a standard fact that a subsequence of the $M_n$ converges
to a minimal lamination $\lc$ of $A$ with empty singular set and
empty singular set of convergence (see for instance the arguments in
the proof of Lemma~1.1 in Meeks and Rosenberg~\cite{mr8}). In this case,
$\overline{\cal L}^A={\cal L}$ and ${\cal S}^A=\mbox{\rm \O}$. Otherwise,
there exists a point $p \in A$ such that, after replacing by a subsequence,
the supremum of the absolute Gaussian curvature of $M_n \cap \B (p, 1/k)$
diverges to $\infty $ as $n\to \infty $, for any $k$ fixed. Since $A$ is
open, we can assume $\B (p,1/k)\subset A$ for $k$ large and thus,
Proposition~1.1 in~\cite{cm35} (see also Theorem~13 in~\cite{mr13})
implies that the sequence of surfaces $\{ M_n\cap \B (p,1/k)\} _n$ is locally
simply connected in $\B (p,1/k)$. We will next describe both
the limit object of the surfaces $M_n\cap \B (p,1/k)$ as $n\to \infty $
and the surfaces themselves for $n$ large; this description relies on
Colding-Minicozzi theory and is adapted
from a similar description in~\cite{mpr14}; we have include it here as well
for the sake of completeness.

\begin{enumerate}[(D)]
\item For $k$ and $n$ large,  $M_n\cap \overline{\B }(p,1/k)$ consists
of compact disks with boundaries in
$\esf ^2(p,1/k)$.
By Theorem~5.8 in~\cite{cm22}, after a rotation of $\R^3$ and extracting a subsequence,
each of the disks $M_n\cap \overline{\B }(p, 1/k)$ contains a
$2$-valued minimal graph\footnote{In polar coordinates $(\rho,\theta )$ on $\R^2-\{ 0\} $
with $\rho > 0$ and $\t \in \R$, a $k$-valued graph on an annulus of inner radius $r>0$ and outer radius $R>r$,
is a single-valued graph of a function $u(\rho ,\t )$ defined over $\{ (\rho ,\t ) \ | \ r\leq \rho \leq R,\ |\t |\leq k\pi \} $,
$k$ being a positive integer.}
 defined on an annulus $\{ (x_1,x_2,0)\ | \ r_n^2\leq x_1^2+x_2^2\leq R^2\} $
with inner radius $r_n\searrow 0$, for certain $R\in (r_n,1/k)$ small but fixed.
By the one-sided curvature estimates and other
results in~\cite{cm23}, for some $k_0$
sufficiently large, a subsequence of the surfaces $\{  M_n\cap \B (p,
1/k_0) \}_n$ (denoted with the same indexes $n$)
converges to a possibly singular minimal lamination
$\overline{\lc_p}$ of $\B (p,1/k_0)$ with singular set
${\cal S}_p \subset \B (p, 1/k_0)$,
related (regular) minimal lamination ${\cal L}_p \subset \B (p,
1/k_0) -{\cal S}_p$ and singular set of convergence $S(\cL_p)\subset \cL_p$.
Moreover, $\cL_p$ contains a limit leaf with $p$ in its closure, that
is either a stable minimal disk $D(p)$ (if $p\in S(\cL _p)$) or a stable
 punctured minimal
 disk $D(p,*)$ (if $p\in \cS_p$), and in this last case $D(p,*)$
 extends smoothly across $p$ to a stable minimal disk $D(p)$
 that is a leaf of $\overline{\lc_p}$; this is Lemma II.2.3 in~\cite{cm25}.
In fact, $D(p)$ appears as a limit of
the previously mentioned 2-valued minimal graphs inside the $M_n$,
that collapse into it. In both cases,
the boundary of $D(p)$ is contained in $\esf ^2(p,1/k_0)$
and $D(p) \cap {\cal S}_p\subseteq \{ p\}$.
By Corollary~I.1.9 in~\cite{cm23}, there is a solid
double cone\footnote{A {\it solid double cone} in $\R^3$ is a set that after a
rotation and a translation, can be written as $\{ (x_1,x_2,x_3)\ | \ x_1^2+x_2^2
\leq \de ^{-2}x_3^2\} $ for some $\de >0$. A solid double cone in a ball is the intersection
of a solid double cone with a ball centered at its vertex.}
 ${\cal C}_p\subset \B (p, 1/k_0)$ with vertex at
$p$ and axis orthogonal to the tangent plane $T_pD(p)$, that
intersects $D(p)$ only at the point $p$ and such that the
complement of ${\cal C}_p$ in $\B (p,1/k_0)$ does not
intersect ${\cal S}_p$. Also, Colding-Minicozzi theory implies that
for $n$ large, $M_n\cap \B (p, 1/k_0)$ has the appearance
outside ${\cal C}_p$ of two highly-sheeted multivalued graphs over
$D(p,*)$, see Figure~\ref{fig2cone}.
\begin{figure}
\begin{center}
\includegraphics[width=10cm]{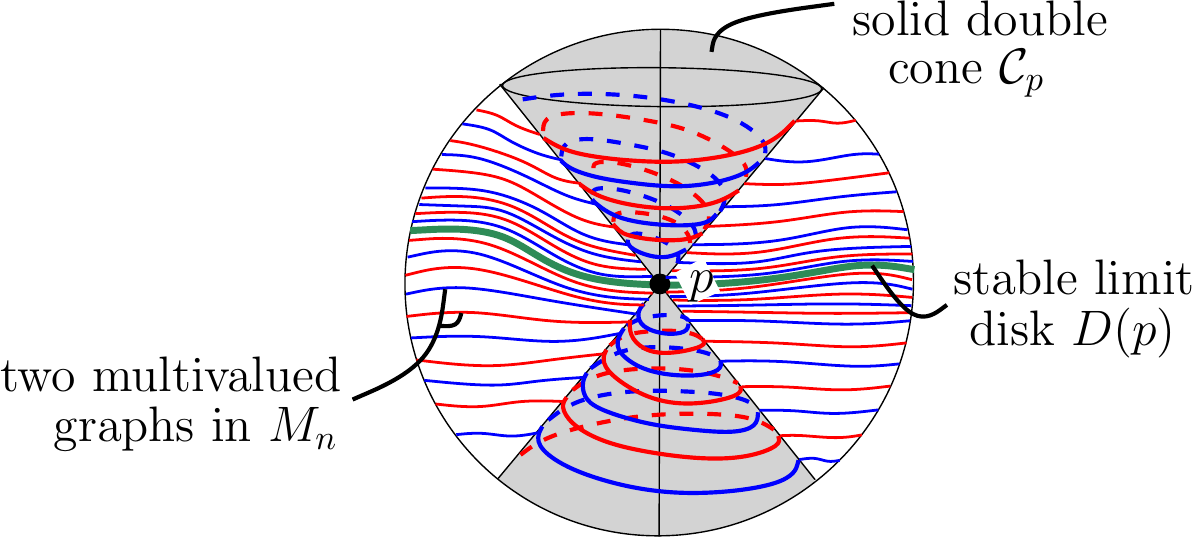}
\caption{The local picture of disk-type portions of $M_n$ around an
isolated point $p\in \cS$. The stable minimal punctured disk
$D(p,*)$ appears in the limit lamination ${\cal L}_p$, and extends smoothly
through $p$ to a stable minimal disk $D(p)$ that is orthogonal at $p$ to the axis of
the solid double cone ${\cal C}_p$.}
\label{fig2cone}
\end{center}
\end{figure}
Furthermore:
\end{enumerate}

\begin{enumerate}[(D1)]
\item If $p \in S(\cL_p)$ (in particular, $\overline{{\cal L}_p}=\cL_p$ admits a local lamination
structure around $p$), then after possibly choosing a larger $k_0$,
there exists a neighborhood of $p$
 in $\overline{\B}(p,1/k_0)$ that is foliated by compact disks in ${\cal L}_p$,
 and $S(\cL_p)$ intersects this family of disks transversely in a connected
Lipschitz arc. This case corresponds to case~(P) described in Section~II.2 of~\cite{cm25}.
In fact, the Lipschitz curve $S(\cL_p)$ around $p$ is a $C^{1,1}$-curve orthogonal to the
local foliation~(Meeks~\cite{me25, me30}), see Figure~\ref{CM} left.

\item If $p\in \cS_p$, then
after possibly passing to a larger $k_0$, a subsequence of the
surfaces $\{ M_n\cap \B (p,1/k_0)\}_n$
(denoted with the same indexes $n$) converges $C^\a$, $\a\in (0,1)$,
on compact subsets of $\B (p,1/k_0)-[\cS_p\cup S(\cL_p)]$ to
the (regular) lamination $\cL_p-S(\cL_p)$.
\end{enumerate}

To continue with the local description of case (D2), it is worth distinguishing
two subcases:
\begin{figure}
\begin{center}
\includegraphics[width=14cm]{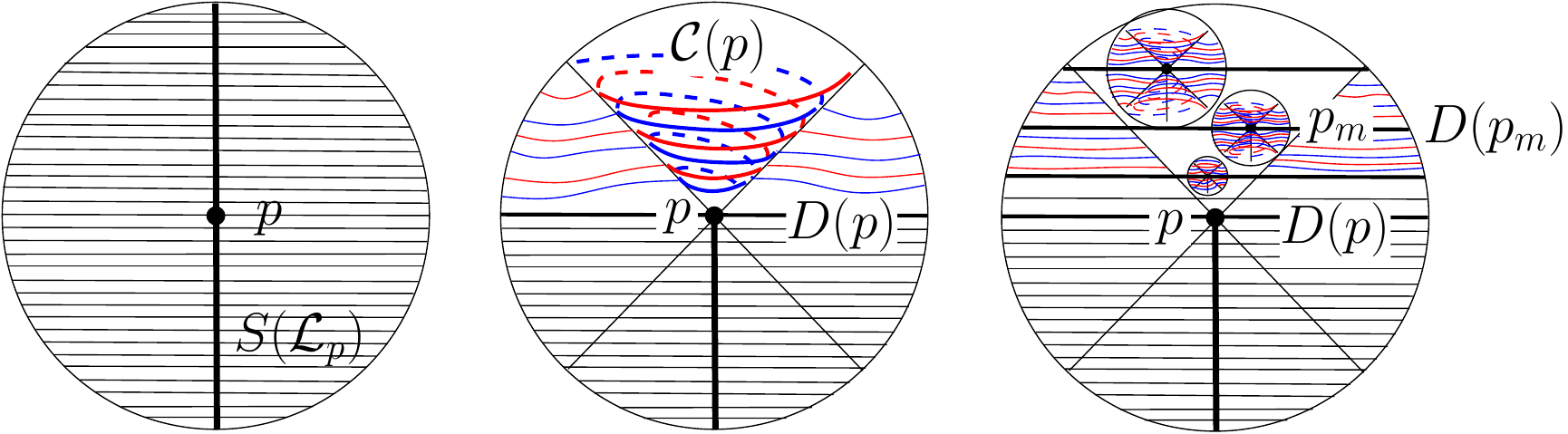}
\caption{Left: Case (D1), in a neighborhood of a point $p\in S({\cal L}_p)$.
Center: Case (D2-A) for an isolated point $p\in {\cal S}_p$. In the picture,
$p$ is the end point of an arc contained in $S({\cal L}_p)$, although $D(p,*)$ could
also be the limit of two pairs of multivalued graphical leaves, one pair on each side.
Right: Case (D2-B) for a nonisolated point $p\in {\cal S}_p$.}
\label{CM}
\end{center}
\end{figure}
\begin{description}
\item[{\rm (D2-A)}] If $p$ is an isolated point in $\cS_p$, then the limit leaf $D(p,*)$
of $\cL_p$ is either the limit of two pairs of multivalued graphical leaves in $\cL_p$ (one
pair  on each side of $D(p,*)$),
or $D(p,*)$ is the limit on one side of just one pair of multivalued graphical leaves
in $\cL_p$; in this last case, $p$ is the end point of an open arc $\G \subset
S(\cL_p)\cap \cC_p$, and a neighborhood of $p$ in the closure of the component
of $\B(p,1/k_0)-D(p,*)$ that contains $\G $ is entirely foliated by disk leaves of $\cL_p$,
see Figure~\ref{CM} center.

\item[{\rm (D2-B)}] $p$ is not isolated as a point in $\cS_p$. In this case, $p$ is the limit
of a sequence $\{ p_m\} _m\subset \cS_p\cap \cC_p$. In particular, $D(p)$ is the limit
of the related sequence of stable minimal disks $D(p_m)$, and $D(p,*)$ is the limit of a sequence of
pairs of multivalued graphical leaves of $\cL_p\cap [\B (p,1/k_0)-(\cC_p\cup \{ D(p_m)\} _m]$.
Note that these singular points $p_m$ might be isolated or not in $\cS_p$, see Figure~\ref{CM} right.
\end{description}

A standard diagonal argument implies, after extracting a
subsequence, that the sequence $\{M_n \}_n$ converges to a possibly
singular minimal lamination $\overline{\cal L}^A={\cal
L}\cup {\cal S}^A$ of $A$, with related (regular)
lamination ${\cal L}$ of $A-{\cal S}^A$, singular set ${\cal
S}^A\subset A$ and with singular set of convergence $S({\cal
L})\subset A-{\cal S}^A$ of the $M_n$ to ${\cal L}$. Furthermore,
in a neighborhood of every point $p
\in {\cal S}^A\cup S({\cal L})$, $\overline{\cal L}^A$ has the
appearance of the possibly singular minimal lamination $\overline{\lc_p}$
described above. Note that when $p\in S({\cal L})$,
since $S({\cal L})\subset {\cal L}$, then $\overline{{\cal L}_p}$ is a
regular minimal lamination that must be a foliation near $p$.

Next we describe the structure of the closure $\overline{\cal L}$ of
$\cal L$ in $\rth$. $\overline{\cal L}$ can be
written as
\begin{equation}
\label{eq:laminA}
 \overline{\cal L}=\overline{\cal
L}^A\cup  (W\cap \overline{\cal L}) =({\cal
L}\cup {\cal S}^A)\cup  (W\cap
\overline{\cal L})^{\mbox{\footnotesize lam}} \cup
(W\cap \overline{\cal L})^{\mbox{\footnotesize sing}},
\end{equation}
(all unions in (\ref{eq:laminA}) are disjoint), where
\[
\begin{array}{l}
(W\cap \overline{\cal L})^{\mbox{\footnotesize sing}}=\{ p\in W\cap
\overline{\cal L}\ \mid \ \overline{\cal L} \mbox{ does not admit
locally a lamination structure around }p\}
\\
\rule{0cm}{.4cm}(W\cap \overline{\cal L})^{\mbox{\footnotesize lam}}
=(W\cap \overline{\cal L})-
(W\cap \overline{\cal L})^{\mbox{\footnotesize sing}}.
\end{array}
\]
\par
\vspace{-.2cm}
\noindent
Consider the  set
\begin{equation}
\label{eq:S}
{\cal S}={\cal S}^A\cup  (W\cap \overline{\cal
L})^{\mbox{\footnotesize sing}},
\end{equation}
that is closed in $\rth$. If we define
\begin{equation}
\label{eq:reglam}
{\cal L}_1={\cal L}\cup (W\cap
\overline{\cal L})^{\mbox{\footnotesize lam}},
\end{equation}
 then ${\cal L}_1$
can be endowed naturally with a structure of a (regular) minimal
lamination of the open set $\R^3-{\cal S}$. Thus, the decomposition
(\ref{eq:laminA}) gives that $\overline{\cal L}$ is a possibly
singular lamination of $\R^3$, with singular set ${\cal S}$ and
related (regular) lamination ${\cal L}_1$, and so, to finish this
section in remains to prove items~1, \ldots , 7 in the statement of
Theorem~\ref{tttt}.

\begin{lemma}
\label{lemma4.1}
Items~1, 2 of Theorem~\ref{tttt} hold.
\end{lemma}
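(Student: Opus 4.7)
The plan is to prove Item~2 first and then use its rigidity conclusion to deduce Item~1. The two engines will be the stable limit leaf theorem and Corollary~\ref{corrs}: the former says that every limit leaf of the regular lamination $\cL_1$ of $\R^3-{\cal S}$ is stable; the latter, applied to the stable sublamination of $\cL_1$ (noting that ${\cal S}$ is countable and closed), produces a minimal lamination of $\R^3$ whose leaves are complete stable embedded minimal surfaces and hence planes by the classical theorem of do Carmo--Peng and Fischer-Colbrie--Schoen.

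For the first part of Item~2, if $L=L_1\cup{\cal S}_{L_1}$ is a limit leaf of $\ov{\cL}$, then $L_1$ is stable and extends across ${\cal S}_{L_1}$ to a plane $P$ by the engine above; the identification of singular leaf points of $L_1$ with $P\cap{\cal S}$ then gives $L=P\in{\cal P}$. A routine check shows $\mbox{Lim}(\ov{\cL})=\ov{\mbox{Lim}(\cL_1)}$, which is closed in $\R^3$ by Corollary~\ref{corrs}(2). For the second part, any $p\in A\cap{\cal S}_{L_1}$ lies in ${\cal S}^A$, and the local description (D2) around $p$ furnishes a stable punctured minimal disk $D(p,*)$ in $\overline{\cL_p}$ that is a limit leaf of $\cL_p$ and has $p$ as its singular leaf point. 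Since Proposition~\ref{propos1} ensures $p$ is a singular leaf point of at most one leaf of $\ov{\cL}^A$, the leaf $L$ must locally agree with the one containing $D(p,*)$, so $L_1$ is a limit leaf of $\cL_1$ and hence $L$ is a limit leaf of $\ov{\cL}$. The ``in particular'' statement is then immediate: a non-flat leaf with a singular leaf point in $A$ would be a planar limit leaf, a contradiction.

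For Item~1, I will establish ${\cal P}=\ov{\mbox{Stab}(\cL_1)}$, which is closed by Corollary~\ref{corrs}(1). The inclusion $\subset$ is trivial, every plane being stable. For the reverse, any leaf $Q$ of $\ov{\mbox{Stab}(\cL_1)}$ is a plane approached by stable leaves $S_n$ of $\cL_1$. A leaf $L''$ of $\cL_1$ meeting $Q$ (necessarily at a point in $\R^3-{\cal S}$, since $L''\subset\R^3-{\cal S}$) must be tangent to $Q$ at that intersection---transversality would force $L''\cap S_n\neq\mbox{\rm \O}$ for large $n$, contradicting that the leaves of $\cL_1$ are pairwise disjoint---and the strong maximum principle then gives $L''\subset Q$. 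Since $Q-{\cal S}$ is connected in the plane $Q$ (as ${\cal S}\cap Q$ is countable), it is a single leaf of $\cL_1$ whose associated leaf in $\ov{\cL}$ is $Q$. The main obstacle I foresee is the preliminary countability of ${\cal S}={\cal S}^A\cup(W\cap\ov{\cL})^{\mbox{sing}}$ needed to apply Corollary~\ref{corrs}: the $W$-contribution is countable by hypothesis, while the countability of ${\cal S}^A$ must be extracted from the local picture (D), which locally confines ${\cal S}^A$ to the axes of the solid double cones $\cC_p$.
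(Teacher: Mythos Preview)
Your proposal hinges on the countability of ${\cal S}={\cal S}^A\cup (W\cap\ov{\cL})^{\mbox{\footnotesize sing}}$, and you correctly flag this as the main obstacle---but your suggested fix does not work. The local picture (D) confines ${\cal S}^A$ near a point $p$ to the solid double cone ${\cal C}_p$, not to its axis; and even had it been confined to a one-dimensional set, a closed subset of a line can be uncountable (a Cantor set, say). Description~(D2-B) explicitly allows singular points of ${\cal S}^A$ to accumulate, and nothing in the Colding--Minicozzi theory forces ${\cal S}^A$ to be countable. Since Corollary~\ref{corrs} requires a \emph{countable} closed set $W$ to extend the stable sublamination, your application of it to $\cL_1$ on $\R^3-{\cal S}$ is not justified. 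This breaks both your proof of the first sentence of Item~2 and your identification ${\cal P}=\ov{\mbox{Stab}(\cL_1)}$ for Item~1.

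The paper sidesteps this difficulty. For Item~1 it gives the one-line observation that limits of planes are planes (no countability needed). For the first sentence of Item~2 it distinguishes two cases: if the $M_n$ have locally bounded curvature in $A$ then ${\cal S}^A=\mbox{\rm \O}$ and your argument via Corollary~\ref{corrs} goes through verbatim (this is case~(C1)); otherwise (case~(C2)) one enlarges the leaf $L_A$ by adjoining those points $q\in{\cal S}^A$ for which $D(q,*)\subset L_A$, proves the resulting smooth surface $\wt{L}_A$ is complete \emph{outside the countable set $W$} (the nontrivial step), and then invokes Corollary~7.2 of~\cite{mpr10} rather than Corollary~\ref{corrs}. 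Your argument for the second sentence of Item~2 via Proposition~\ref{propos1} is essentially correct and close to the paper's; the paper argues slightly more directly that $L_1\cap\ov{\B}(p,\ve)=D(p,*)$ is itself a limit leaf of the local lamination, without invoking uniqueness of singular leaf points.
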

\begin{proof}
Item 1 holds since the
limit of a convergent sequence of planes is a plane.
Next we show that the first sentence in item~2 holds.

Consider a limit leaf
$\overline{\cal L}(L_1)=L_1\cup {\cal S}_{L_1}$
of $\overline{\cal L}$, where $L_1$ is its related leaf of
the regular lamination ${\cal L}_1$ and ${\cal S}_{L_1}$
is the set of singular leaf points of $L_1$. Thus,
$L_1$ is a limit leaf of ${\cal L}_1$.
As ${\cal L}_1$ is a regular lamination of $\R^3-{\cal S}$,
then the stable limit leaf theorem~\cite{mpr19,mpr18} applies
in this case and gives that the two-sided cover of $L_1$ is stable.
Since the set $\mbox{\rm Lim}({\cal L}_1)$ of limit leaves
of ${\cal L}_1$ forms a sublamination
(closeness of $\mbox{\rm Lim}({\cal L}_1)$
follows essentially from taking double limits),
then
the first sentence of item~2 of Theorem~\ref{tttt} reduces to checking that
$\overline{\cal L}(L_1)$ is a plane. To
do this, we will distinguish two cases.
\begin{enumerate}[(C1)]
\item If the $M_n$ have uniformly
locally bounded Gaussian curvature in $A$, then $\overline{\cal L}^A$ is a
(regular) minimal lamination of $A$, i.e., $\overline{\cal L}^A={\cal L}$
and ${\cal S}^A=\mbox{\rm \O}$. Hence, ${\cal S}\subset W$ and thus,
${\cal S}$ is a closed countable set of $\R^3$.
Applying Corollary~\ref{corrs} we deduce that $L_1$ extends across $\overline{L_1}\cap W$
and its two-sided cover is a stable minimal surface. Since
such an extension is clearly complete, it follows that the extension
of $L_1$ across $\overline{L_1}\cap W$ is a plane. But this extension
coincides with $\overline{\cal L}(L_1)$ and we are done in this case.

\item Suppose now that the $M_n$ do not have uniformly locally
bounded Gaussian curvature in~$A$. By construction, we can decompose
$L_1=L_A\cup [L_1\cap
(W\cap \overline{\cal L})^{\mbox{\footnotesize lam}}]$,
where $L_A$ is a leaf of the (regular) minimal lamination
${\cal L}=\overline{\cal L}^A-{\cal S}^A$ of $A-{\cal S}^A$.
Note that the two-sided cover of $L_A$ is stable,
since the same holds for $L_1$ by the stable limit leaf theorem~\cite{mpr19,mpr18}.

 Consider the union $\wt{L}_A$ of $L_A$
with all points $q\in {\cal S}^A$ such that the related punctured disk
$D(q,*)$ defined in (D) above is contained in $L_A$. Clearly, $\wt{L}_A$ is a (smooth)
minimal surface and the two-sided
cover of $\wt{L}_A$ is stable. We claim that $\wt{L}_A$ is complete outside $W$ in the sense that
every divergent  arc $\a \colon [0,1)\to \wt{L}_A$ of finite length has its limiting end point in $W$.

Arguing by contradiction, suppose that there exists a divergent arc
$\a \colon [0,l)\to \wt{L}_A$ of length $l$ such that
\[
q:=\lim _{t\to l^-}\a (t)\in \overline{\wt{L}_A}-W=\overline{L_A}^A=\overline{L_A}\cap A.
\]
Therefore, there exists $\de >0$ such that $\a (t)\in \overline{\B }(q,\ve )$ for every
$t\in [l-\de ,l)$, where $\overline{\B }(q,\ve )$ is the closed ball that appears in
description (D)
(with $\ve =1/k_0$),
and $\ve >0$ is taken sufficiently small so that $\overline{\B }(q,\ve )\subset A$.
Note that by construction, $\a (t)\notin D(q,*)$ for every $t\in [l-\de ,l)$.
As $D(q)$ separates $\overline{\B }(q,\ve )$, then $\a ([l-\de ,l))$ is contained in one of the
two halfballs of $\B (q,\ve )-D(q)$,
say in the upper ``halfball'' $\B ^+$
(we can choose orthogonal coordinates in $\R^3$ centered at $q$ so that $T_qD(q)$ is the $(x_1,x_2)$-plane).
In particular, there cannot exist a sequence $\{ q_m\} _m\subset \cS ^A$ converging to $q$
in $\B ^+$, because otherwise $q_m$ produces via (D2-B)
a related disk $D(q_m)$
that is proper in $\overline{\B ^+}$, such that the sequence $\{ D(q_m)\} _m$
converges to $D(q)$ as $m\to \infty$; as $\a (l-\de )$ lies
above one of these disks $D(q_k)$ for $k$ sufficiently large, then $\a ([l-\de ,l))$ lies
entirely above $D(q_k)$, which contradicts that $\g $ limits to $q$.
Therefore,  after possibly choosing a smaller $\ve $, we can assume that there are no
points of ${\cal S}^A$ in $\B ^+$ other than $q$.
Now consider the lamination $\cL'$ of $\B (q,\ve )-\{ q\} $ given
by $D(q,*)$ together with the closure of $L_A\cap \overline{\B ^+}$ in
$\B (q,\ve )-\{ q\} $. As the leaves of $\cL '$ are
all stable (if $L_A$ is two-sided; otherwise
we pass to a two-sided cover), then Corollary~\ref{corrs} implies that $\cL'$ extends
smoothly across $q$, which is clearly impossible. This contradiction proves our claim
that $\wt{L}_A$ is complete outside $W$.

Applying Corollary~7.2 in~\cite{mpr10} to $\wt{L}_A$, we deduce that closure of
$\wt{L}_A$ in $\R^3$ is a plane, which finishes the proof of the first
sentence in item~2 of Theorem~\ref{tttt}.
\end{enumerate}

As for the second sentence in item~2 of Theorem~\ref{tttt},
take a leaf $L=\overline{\cal L}(L_1)=L_1\cup {\cal S}_{L_1}$ of $\overline{\cal L}$
and suppose that $p$ is a point in $A\cap {\cal S}_{L_1}$. As $\{ M_n\} _n$ is
locally simply connected outside $W$, then the description
in (D)-(D1)-(D2) above implies that for $\ve >0$
sufficiently small, $L_1\cap \overline{\B }(p,\ve )$ equals the punctured disk $D(p,*)$ that
appears in this description, since $p$ is a singular leaf point of $L_1$, also see
Example~\ref{example1.3}-(B) in the Introduction. In particular,
$L_1\cap \overline{\B }(p,\ve )$ is a limit leaf of the local lamination in $\B (p,\ve )$ minus
a certain solid cone centered at $p$. As $L$ is connected, we
conclude that $L$ is a limit leaf of $\overline{L}$. In this situation, the  first sentence
in item~2 of Theorem~\ref{tttt} implies that $L\in {\cal P}$. This finishes the proof of
Lemma~\ref{lemma4.1}.
\end{proof}

Next we prove item~4
of Theorem~\ref{tttt}, since we shall made use of it in the proof of item~3
of the same theorem.

\begin{lemma}
Item 4 of Theorem~\ref{tttt} holds.
\end{lemma}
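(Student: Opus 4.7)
The plan is to use the local description (D), (D1), (D2) of the behavior of the sequence $\{M_n\}_n$ near any $q\in {\cal S}^A\cup S({\cal L})$, combined with item~2 already established in Lemma~\ref{lemma4.1}. First I would produce the plane $P_q$: the local picture gives a stable minimal disk $D(q)$ through $q$ that is a limit leaf of the local lamination $\overline{{\cal L}_q}$, obtained directly in case (D1) or, in case (D2), as the smooth extension across $q$ of the stable punctured disk $D(q,*)$. Since $D(q)\subset \overline{{\cal L}}$ and the leaves of the regular lamination ${\cal L}_1$ are disjoint, $D(q)$ extends to a unique leaf $L$ of $\overline{{\cal L}}$, which must itself be a limit leaf because it is locally accumulated on at least one side by graphical leaves of ${\cal L}_q$. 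The first sentence of item~2 then forces $L$ to be a plane, which I call $P_q$.

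Next I would prove countability. Since $W$ is countable by hypothesis, only $P_q\cap ({\cal S}^A\cup S({\cal L}))$ requires attention, and I would show this set is discrete in $P_q$. Given $x$ in this set, applying the previous step to $x$ yields a plane $P_x\in \mbox{Lim}(\overline{{\cal L}})$ through $x$; as both $P_q$ and $P_x$ are leaves of the regular lamination ${\cal L}_1$ sharing the point $x$, they must coincide. To see that $x$ is isolated in $P_q\cap ({\cal S}^A\cup S({\cal L}))$, I would separate two subcases. If $x\in S({\cal L})$, case (D1) gives a foliation of a neighborhood of $x$ by disk leaves (one of which is a local piece of $P_q$), and the $C^{1,1}$-regularity theorem of Meeks~\cite{me25,me30} forces $S({\cal L})$ near $x$ to be a single $C^{1,1}$ arc orthogonal to this foliation, meeting $P_q$ only at $x$. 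If $x\in {\cal S}^A$, then cases (D2-A) and (D2-B) locate every other nearby singular point inside the solid double cone ${\cal C}_x$ with axis orthogonal to $T_xD(x)=T_xP_q$; since ${\cal C}_x-\{x\}$ is disjoint from $P_q$, no other point of ${\cal S}^A$ accumulates at $x$ along $P_q$. This isolation argument yields discreteness, hence countability.

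Closedness of $P_q\cap ({\cal S}^A\cup S({\cal L})\cup W)$ follows from basic topological observations: $W$ is closed in $\R^3$, ${\cal S}^A$ is closed in $A=\R^3-W$ by the definition of singular lamination, and $S({\cal L})$ is closed in ${\cal L}=A-{\cal S}^A$ because curvature blow-up persists under limits. Standard gluing along these nested closures gives that ${\cal S}^A\cup S({\cal L})\cup W$ is closed in $\R^3$, so its intersection with the closed plane $P_q$ is closed. The main obstacle I anticipate is case (D2-B), where $x\in {\cal S}^A$ is an accumulation point of other singular points $p_m\in {\cal S}^A$: the key technical point is to exploit the solid double cone geometry carefully to guarantee that these $p_m$ lie strictly off $P_q$ for all large $m$, thereby ruling out any nonisolated behavior along $P_q$.
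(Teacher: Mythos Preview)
Your proposal is correct and follows the same route as the paper: use the local description (D)--(D2) together with item~2 to produce the plane $P_q$, then exploit the cone geometry at each point of $P_q\cap({\cal S}^A\cup S({\cal L}))$ to see that this set is discrete in $P_q-W$, hence countable once the countable set $P_q\cap W$ is adjoined. Your write-up is more explicit than the paper's terse ``by the same local picture''; the only imprecision is that when $x\in{\cal S}^A$ the identification $P_x=P_q$ should be justified via Proposition~\ref{propos1} (a singular point is a singular leaf point of at most one leaf) rather than via ``leaves of the regular lamination ${\cal L}_1$ sharing the point $x$'', since $x$ is then a singular point and not in ${\cal L}_1$.
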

\begin{proof}
The local picture
of $\overline{\lc}^A$ described in (D)-(D1)-(D2)
implies that through each point $q\in {\cal
S}^A\cup S({\cal L})$ there passes a limit leaf of $\overline{\cal
L}$ that, by item~{2} of Theorem~\ref{tttt}, must be a plane $P_q\in
\mbox{\rm Lim}(\overline{\cal L})$. Next we will prove that $P_q\cap
({\cal S}^A\cup S(\lc) \cup W)$ is a closed countable set. By the same local picture,
we have that $P_q\cap ({\cal S}^A\cup S(\lc))$ is a discrete
subset of $P_q-W$, that is clearly closed in the intrinsic topology
of $P_q-W$. Thus the limit points of
$P_q\cap ({\cal S}^A\cup S(\lc))$ lie in the closed countable
set $P_q\cap W$. It then follows that  $P_q\cap ({\cal S}^A\cup
S(\lc) \cup W)$ is a closed countable set of $\R^3$, and
the lemma follows.
\end{proof}

\begin{lemma}
\label{lemma3.4}
Item 3 of Theorem~\ref{tttt} holds.
\end{lemma}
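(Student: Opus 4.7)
The plan is to prove Item~3 by contradiction: assuming no $\delta>0$ satisfies $P(\delta)\cap \overline{\cal L}=P$, pick a sequence $\{q_n\}\subset \overline{\cal L}-P$ with $\mbox{dist}(q_n,P)\to 0$, and derive a contradiction. First, I would combine items~2 and~4 (already proved) with the hypothesis $P\in {\cal P}-\mbox{Lim}(\overline{\cal L})$ to extract structural information about $P$: if $q\in P\cap (\cS^A\cup S({\cal L}))$, then item~4 provides a plane $P_q\in \mbox{Lim}(\overline{\cal L})$ through $q$, and the local picture~(D) around such $q$ forces the leaf $P$ of $\overline{\cal L}$ through $q$ to coincide locally with $P_q$, so $P=P_q\in \mbox{Lim}(\overline{\cal L})$, a contradiction. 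Hence $P\cap (\cS^A\cup S({\cal L}))=\mbox{\rm \O}$. Similarly, the second sentence of item~2 forces $A\cap \cS_{L_1}=\mbox{\rm \O}$, i.e.\ $\cS_{L_1}\subset W$, so $P\cap \cS\subset P\cap W$.

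Next I would split according to the behavior of the orthogonal projections $\pi_n\in P$ of $q_n$ onto $P$. In the \emph{bounded case}, after a subsequence $\pi_n\to \pi\in P$ and so $q_n\to\pi$, there are two subcases. If $\pi\notin \cS$, then $\pi$ is a regular point of the lamination ${\cal L}_1=\overline{\cal L}-\cS$, and the local product structure of ${\cal L}_1$ near $\pi$ forces the leaves through the $q_n$ to be distinct plates accumulating intrinsically onto $L_1$; thus $L_1\in \mbox{Lim}({\cal L}_1)$, i.e.\ $P\in \mbox{Lim}(\overline{\cal L})$, contradicting the hypothesis. If $\pi\in \cS$, then $\pi\in \cS_{L_1}\subset W$; I choose $r>0$ small enough that $\cS\cap \partial\B(\pi,r)=\mbox{\rm \O}$, and work in the punctured ball $\overline{\B}(\pi,r)-\{\pi\}$. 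The component $\Sigma_1$ of $P\cap (\overline{\B}(\pi,r)-\{\pi\})$ with $\pi\in \overline{\Sigma}_1$ is a properly embedded noncompact minimal surface; on the other hand, the leaves of $\overline{\cal L}-P$ containing the $q_n$ yield, after passing to a subsequence and extracting the relevant connected components, a disjoint properly embedded noncompact minimal surface $\Sigma_2$ with $\pi\in \overline{\Sigma}_2$; two such surfaces contradict Corollary~\ref{corol2.3}.

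The \emph{unbounded case} $|\pi_n|\to \infty$ is the main obstacle. I would handle it using the Colding-Minicozzi one-sided curvature estimate (Theorem~0.2 in~\cite{cm23}): embeddedness of the leaf $L_n$ through $q_n$ combined with $\mbox{dist}(q_n,P)\to 0$ forces neighborhoods of $q_n$ in $L_n$ to be small-gradient graphs over $P$; translating $\pi_n$ to the origin and extracting a convergent subsequence, these graphs produce in the limit a minimal lamination of a full neighborhood of the origin containing the translated plane as a limit leaf, which when transferred back shows that $P$ is a limit leaf of $\overline{\cal L}$, a contradiction. Finally, the second assertion $\cS\cap [{\cal P}-\mbox{Lim}(\overline{\cal L})]=\mbox{\rm \O}$ follows immediately: if $p\in P\cap \cS$, then $\B(p,\delta)\subset P(\delta)$ gives $\overline{\cal L}\cap \B(p,\delta)=P\cap \B(p,\delta)$, a flat disk admitting a trivial lamination structure, which contradicts $p$ being a singular point of $\overline{\cal L}$.
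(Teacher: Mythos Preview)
Your preliminary reductions are sound, and the bounded case is essentially correct, but the unbounded case $|\pi_n|\to\infty$ has a genuine gap. The Colding--Minicozzi one-sided curvature estimate does give local graphical structure for the leaf through $q_n$ near $q_n$, and translating $\pi_n$ to the origin may well produce a limit lamination in which the translated plane is a limit leaf. But this conclusion is a statement about the \emph{limit of translates}, not about $\overline{\cal L}$ itself; there is no mechanism to ``transfer back'' the limit-leaf property. Being a limit leaf of $\overline{\cal L}$ requires a sequence of points on other leaves converging to a \emph{fixed} point of $P$, and when $|\pi_n|\to\infty$ the $q_n$ diverge in $\R^3$, so they witness nothing of the sort. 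A concrete obstruction: a single proper leaf $L'\neq P$ that is asymptotic to $P$ at infinity (but whose closure in $\R^3$ is just $L'$) would produce exactly such a sequence $q_n$ without making $P$ a limit leaf.

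The paper closes this gap with a global argument that you are missing. First, since $\mbox{Lim}(\overline{\cal L})$ is a closed set of planes and $P\notin\mbox{Lim}(\overline{\cal L})$, one can choose $\delta>0$ so that the whole slab $P(2\delta)$ is disjoint from $\mbox{Lim}(\overline{\cal L})$, and hence (by item~4) disjoint from ${\cal S}^A\cup S({\cal L})$. Any leaf $L'\neq P$ meeting $P(\delta)$ is then proper as a set in $P(\delta)$, and by Proposition~\ref{propos1} together with the maximum principle it is disjoint from $P$. At this point the proof of the halfspace theorem with catenoid barriers (adapted via Proposition~\ref{propos1} to handle the countable singular set in $W$) rules out the existence of $L'$. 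This is precisely the device that handles the ``escaping to infinity'' scenario your argument cannot reach.
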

\begin{proof}
Suppose that
$P$ is a plane in ${\cal P} -\mbox{\rm Lim}(\overline{\cal L})$. Since
$\mbox{\rm Lim}(\overline{\cal L})$ is a closed set of planes, we can choose
$\de >0$ such that the $2\delta$-neighborhood of $P$ is disjoint
from  $\mbox{\rm Lim}(\overline{\cal L})$. By item~{4}
of Theorem~\ref{tttt}, $S(\lc)\cup {\cal S}^A$ is at a positive distance at
least $2\de $ from $P$.

If the $\delta$-neighborhood $P(\delta)$ of $P$ intersects $\overline{\lc}$
in a portion of some leaf $L'$ of $\overline{\lc}$
different from $P$, then $L'\cap P(\de )$, while it
may have singularities in $W$, is proper as a set in $P(\delta)$:
properness of the smooth surface $L'\cap [P(\de )-W]$ is clear (as $P(2\de )$
is disjoint from Lim$(\overline{\cal L})$); hence $L'\cap P(\de )$ only intersects
$W$ in singular leaf points.

We now check that $L'$ is disjoint from $P$.
Arguing by contradiction, suppose that $L'$ and $P$ intersect. Note that
every such intersection point $q$ must lie in $W$
by the maximum principle, and that $q$ is a singular leaf point of both
$L'$ and $P$. This is impossible by Proposition~\ref{propos1}
since $W$ is countable. Therefore, $L'$ does not intersect $P$.
In this setting, we can use
the proof of the halfspace theorem (Hoffman and Meeks~\cite{hm10})
with catenoid barriers (adapted to this situation with
countably many singularities via Proposition~\ref{propos1})
to obtain a contradiction to the existence of $L'$.
Hence, $P(\delta) \cap \overline{\lc} =
P$, which proves the lemma.
\end{proof}

\begin{lemma}
\label{lemma3.3}
Item 5 of Theorem~\ref{tttt} holds.
\end{lemma}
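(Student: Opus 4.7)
The plan is to show in both subcases that some leaf of the limit lamination near $p$ has a two-sided stable cover, to extend it smoothly across the countable set ${\cal S}$ via Corollary~\ref{corrs}, and to conclude that the extension is a plane by the standard flatness result for complete stable minimal surfaces in $\R^3$ (Corollary~7.2 of~\cite{mpr10}). This plane will contain $p$ and will be the desired planar leaf $P_p\in {\cal P}$.

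I would first handle case 5.2. Assume $L$ is a leaf of ${\cal L}$ with $p\in \overline{L}$ and with convergence $M_n\to L$ of finite multiplicity $m>1$. Locally over a two-sided neighborhood in $L$, for $n$ large $M_n$ consists of $m$ ordered graphs; the normalized differences of successive graphs yield, in the limit, positive Jacobi functions on the two-sided cover of $L$, so this cover is stable. Let $L_1\supset L$ denote the related leaf of the regular lamination ${\cal L}_1$ of $\R^3-{\cal S}$; the points of $L_1\setminus L$ lie in $(W\cap \overline{\cal L})^{\mbox{\footnotesize lam}}$ and are smooth extension points, so $L_1$ also has stable two-sided cover. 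Applying Corollary~\ref{corrs} to $L_1$ with $N=\R^3$ and the closed countable set ${\cal S}$, the closure $\overline{L_1}$ extends across ${\cal S}\cap \overline{L_1}$ to a complete stable minimal surface, which by Corollary~7.2 of~\cite{mpr10} is a plane $P_p\in {\cal P}$. Since $p\in \overline{L}\subset P_p$, this is the desired planar leaf through $p$.

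For case 5.1, fix $k\in \N$ and consider the precompact open set $A_k:=\B(p,\ve_k)-\overline{\Omega}_k\subset A$, on which $\{M_n\}$ is locally simply connected with uniform injectivity radius bounds on compact subsets. Since $\mbox{Area}(M_n\cap A_k)\to \infty$ while the monotonicity formula gives a uniform positive lower bound for the extrinsic area density of each $M_n$ at its points in any fixed compact subset of $A_k$, the limit lamination $\overline{\cal L}^A$ restricted to $A_k$ must contain a leaf $L_k$ onto which the $M_n$ accumulate with infinite multiplicity. The same Jacobi-function argument as above gives that the two-sided cover of $L_k$ is stable, and Corollary~\ref{corrs} together with Corollary~7.2 of~\cite{mpr10} extend $L_k$ to a plane $P_k\in {\cal P}$ with $\mbox{dist}(p,P_k)\le \ve_k$. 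Passing to a subsequence so that the unit normals of the $P_k$ converge in $\esf^2$, the $P_k$ converge to a plane $P_p$ through $p$; by Item~1 of Theorem~\ref{tttt}, ${\cal P}$ is closed, so $P_p\in {\cal P}$. Since the area-divergence hypothesis holds in arbitrarily small neighborhoods of $p$, the convergence of the $M_n$ to $P_p$ has infinite multiplicity, as claimed.

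The main technical obstacle I expect is in case 5.1, namely extracting a single leaf of infinite multiplicity from the sole hypothesis of area divergence. This requires combining the monotonicity formula (to rule out area ``escaping'' to zero-density points), the uniform local simple connectivity of $\{M_n\}$ in $A$, and the explicit local picture (D)-(D1)-(D2) of $\overline{\cal L}^A$ around points of ${\cal S}^A\cup S({\cal L})$ developed earlier. Once a stable limit leaf is in hand, the remaining steps---forming the Jacobi function, extending across ${\cal S}$ by Corollary~\ref{corrs}, and invoking the flatness of complete stable minimal surfaces via Corollary~7.2 of~\cite{mpr10}---are a routine application of tools already assembled in the paper.
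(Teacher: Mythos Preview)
Your overall strategy---produce a stable leaf through $p$ via positive Jacobi functions, extend it across singularities, and conclude flatness---is the same as the paper's, but there is a genuine gap in the extension step.

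You apply Corollary~\ref{corrs} with $N=\R^3$ and ``the closed countable set ${\cal S}$'', but ${\cal S}={\cal S}^A\cup (W\cap\overline{\cal L})^{\mbox{\footnotesize sing}}$ is \emph{not} known to be countable: only $W$ is. The leaf $L_1$ of ${\cal L}_1$ lives in $\R^3-{\cal S}$ and could, a priori, be incomplete at points of ${\cal S}^A$; Corollary~\ref{corrs} does not apply across such points. The same objection applies to your planes $P_k$ in case~5.1. The paper handles this by arguing by contradiction: if no plane of ${\cal P}$ passes through $p$, then by items~1 and~4 some ball $\overline{\B}(p,R)$ is disjoint from ${\cal S}^A\cup S({\cal L})$, so in that ball the only singularities are in $W$ and the leaves of $\overline{\cal L}\cap\overline{\B}(p,R)$ are compact and isolated from one another (via Proposition~\ref{propos1}). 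The Jacobi-function argument then gives stability of $L_p-W$ and, by Corollary~\ref{corrs}, smoothness of $L_p$ across $W$. But to promote this to a \emph{global} plane one still needs to show the full leaf $L_1$ is stable and, crucially, stays at positive distance from every point of ${\cal S}^A$; the paper does this using the local picture (D)--(D2) together with Theorem~\ref{tt2}. Only then is $L_1$ complete outside the countable set $W$, and Corollary~7.2 of~\cite{mpr10} yields the contradiction.

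A secondary issue: in case~5.2 you form the graph differences over $L$ as though $M_n\to L$ were smooth everywhere, but this holds only away from $S({\cal L})$; the paper's contradiction setup disposes of this by first ensuring $\overline{\B}(p,R)\cap({\cal S}^A\cup S({\cal L}))=\mbox{\O}$. In case~5.1 your extraction of a leaf of infinite multiplicity from area divergence alone is, as you note, not justified; the paper instead uses the same local isolation of $L_p$ (no limit leaves in $\overline{\B}(p,R)$ under the contradiction hypothesis) so that \emph{either} 5.1 or 5.2 already forces two disjoint graphical sheets of $M_n$ over any compact piece of $L_p-W$, treating both cases uniformly.
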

\begin{proof}
Suppose now that $p \in W\cap \overline{\cal L}$ satisfies one of
the conditions of items~5.1, 5.2 in Theorem~\ref{tttt}. First note
that if $p$ lies in the closure of ${\cal S}^A\cup S({\cal L})$,
then there passes a plane in ${\cal P}$ through $p$ by items~1
and~4 of Theorem~\ref{tttt} and we have the conclusion of item~5
in this case. Otherwise, we find an $R>0$ such that the closed
ball $\overline{\B }(p,R)$ does not intersect ${\cal S}^A\cup
S({\cal L})$. In particular, $\overline{\cal L}\cap [\overline{\B }
(p,R)-W]={\cal L}\cap [\overline{\B }(p,R)-W]$ and the surfaces
$M_n$ converge to ${\cal L}$ on compact subsets of $\overline{\B }(p,R)-W$.
Arguing by contradiction, suppose no plane in ${\cal P}$
passes through $p$. Since ${\cal P}$ is closed in $\R^3$, then we can assume
no plane in ${\cal P}$ intersects $\overline{\B }(p,R)$, and hence
item~2 of Theorem~\ref{tttt} implies that each leaf of $\overline{\cal L}\cap
[\overline{\B }(p,R)-W]$ is proper in $\overline{\B }(p,R)-W$. By
Proposition~\ref{propos1}, the leaves of $\overline{\cal L}\cap
\overline{\B }(p,R)$ are compact in $\overline{\B }(p,R)$ and
pairwise disjoint.

Let $L_p$ be the leaf of $\overline{\cal L}\cap
\overline{\B }(p,R)$ that passes through $p$ ($p$ is a singular leaf point of the
regular part of $L_p$, which in turn is contained in $L_p-W$). Note that the
distance between $L_p$ and the other leaves of $[\overline{\cal
L}\cap \overline{\B }(p,R)]-L_p$ is positive, as follows also from
Proposition~\ref{propos1} together with the fact that $L_p$ is not a
limit leaf.
If 5.1 or 5.2 holds, then given a compact disk $D\subset L_p - W$ and
$\ve\in (0,R)$, there exists an integer $n_0=n_0(D,\ve)$
such that for $n\geq n_0$, there exist
two pairwise disjoint disks $D_1^n$, $D_2^n$ in $M_n$ such that these disks are
normal graphs over $D$ with graphing functions $f_1^n,f_2^n$, respectively,
each having norms less than $\ve$.
\begin{figure}
\begin{center}
\includegraphics[width=9.cm]{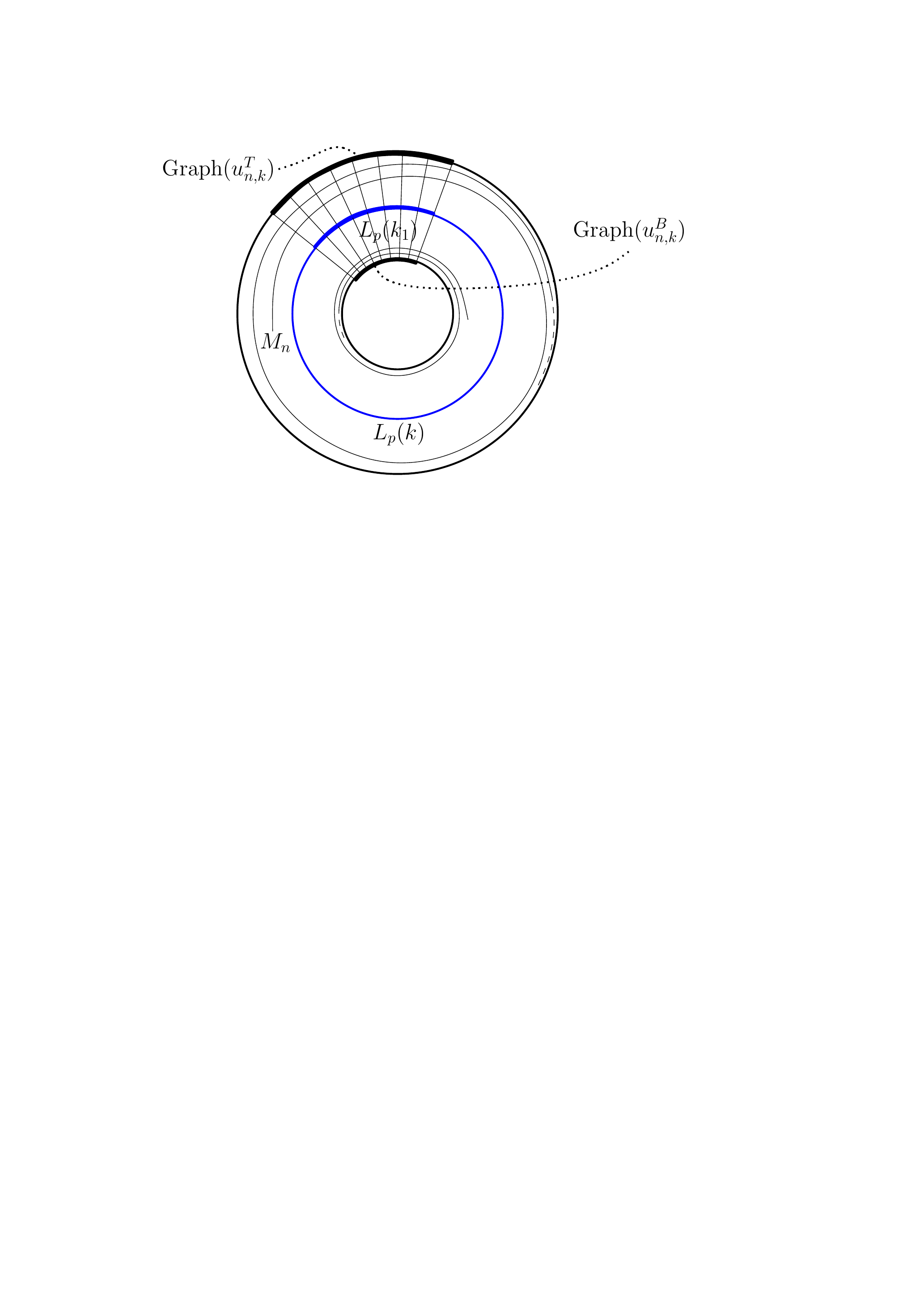}
\caption{Schematic representation of the construction of a positive Jacobi function on
$L_p(k)-\partial L_p(k)$ (we have simplified the figure
by taking one dimension less); here $k_1<k$,
the components of $M_n\cap U(k_1)$
are normal graphs over $L_p(k_1)-\partial L_p(k_1)$
but the components of $M_n\cap U(k)$ are infinitely valued
graphs
over $L_p(k)-\partial L_p(k)$, created by holonomy. In any case, the closure
$M_n\cap U(k)$ in $U(k)$ contains a ``top'' graphical leaf Graph$(u_{n,k}^T)$
and a ``bottom'' one Graph$(u_{n,k}^B)$ that are disjoint.
}
 \label{fig4}
\end{center}
\end{figure}
Observe that if we replace the disk $D$ by a compact subdomain in $L_p-W$, then
the graphing functions $f_1^n,f_2^n$ might fail to be univalent.
More precisely, consider a smooth\footnote{Smoothness of the compact exhaustion
can be assumed as $\esf ^2(p,R)$ can be supposed to be transverse to $L_p$.
Note that the topological boundary $\partial L_p$ is nonempty and contained in $\esf ^2(p,R)$.}
compact exhaustion of $L_p -W$
\[
L_p(1)\subset L_p(2)\subset \ldots \subset L_p(k) \subset \ldots \qquad \mathrm{with}
\quad \partial L_p \subset \partial L_p(1).
\]
Fix $k\in \N$ large and consider the $r(k)$-normal open regular neighborhood
\[
U(k)=\{ x+tN(x)\ | \ x\in L_p(k)-\partial L_p(k), \
|t|<r(k)\} ,
\]
where $N$ stands for the unit normal vector to $L_p$, and $r(k)\in (0,1/k]$ is to be defined.
For $k$ sufficiently large, $U(k)$ is embedded in $\R^3$ for some $r(k)\in (0,1/k]$. For
$n\geq k$ sufficiently large, each component of $M_n\cap U(k)$ is either a normal graph
or an infinitely valued graph
over $L_p(k)-\partial L_p(k)$. In both cases, the validity of 5.1 or 5.2 implies that
the closure of  $M_n\cap U(k)$ in $U(k)$ contains two distinct
leaves that are normal graphs over $L_p(k)-\partial L_p(k)$ with graphing functions
$u_{n,k}^T,u_{n,k}^B\colon L_p(k)-\partial L_p(k) \to [-r(k),r(k)] $; see
Figure~\ref{fig4}.
In order to obtain this description we are using that the
surfaces $M_n\cap \B(p,R)$ have locally bounded
Gaussian curvature in $ \B(p,R)-W$, $ \B(p,R)-W$ is
simply connected and so $L_p-W$ is a two-sided minimal surface,
and the fact that the leaf $L_p$ is a positive distance
from the other leaves of $\overline{\cL}\cap \B(p,R)$.
If we fix a point $p_0\in L_p(k)-\partial L_p(k)$, then
a subsequence of the positive functions
\[
f_{n,k}=\frac{1}{(u_{n,k}^T-u_{n,k}^B)(p_0)}(u_{n,k}^T-u_{n,k}^B)
\]
converges as $n\to \infty $ to a positive Jacobi function $f$ on $L_p(k)-\partial L_p(k)$.
This proves that $L_p(k)-\partial L_p(k)$ is stable for every $k$, which gives that
$L_p-W$ is also stable. By Corollary~\ref{corrs} we deduce that
$L_p-W$ extends across $W\cap \ov{\B} (p,R)$ to a smooth compact minimal surface that is $L_p$.

Let $L=L_1\cap {\cal S}_{L_1}$ denote the leaf of $\overline{\lc}$ that contains $L_p$,
where $L_1$ is the (smooth) leaf of the regular part ${\cal L}_1$ of ${\cal L}$
defined in (\ref{eq:reglam}), and ${\cal S}_{L_1}$ is the set of singular leaf points of $L_1$.
As no plane in ${\cal P}$ passes through $p$, then $L_1$ is not flat and so,
$L_1$ is not a limit leaf of ${\cal L}_1$  by item~2 of Theorem~\ref{tttt}.
Let Lim$(L_1)$ be the set of limit points of $L_1$. We claim that through
every point $q\in \mbox{Lim}(L_1)\cap A$ there passes a plane that is contained
in $\overline{L}$: If $q\in {\cal S}^A$, this follows from item~4 of Theorem~\ref{tttt};
if on the contrary $q\in A-{\cal S}^A$, then the leaf $L_2$ of ${\cal L}_1$
that passes through $q$ is a limit leaf of ${\cal L}_1$, and thus, $\overline{L_2}$
is a plane by item~2 of Theorem~\ref{tttt}. Now our claim holds.
As through every point of Lim$(L_1)\cap A$ there passes a plane in $\overline{L}$, then
a connectedness argument shows that $L_1$ is proper in $\Delta -W$,
where $\Delta \subset \R^3$ is
either an open halfspace or an open slab. As $\Delta -W$ is simply connected and $L_1$ is
properly embedded in $\Delta -W$, then $L_1$ is orientable.
 Now consider a compact subdomain $\Omega \subset L_1$. As $L_1$ is not a limit leaf
 of ${\cal L}_1$, then $\Omega $ is at a positive distance from any leaf of ${\cal L}_1$
 different from $L_1$. In particular, $\Omega $ admits an normal open neighborhood
 that is disjoint from any other leaf of ${\cal L}_1$. In this setting,
 we can repeat the argument in the previous paragraph to construct
 a positive Jacobi function on $\Omega $,
 which proves that $\Omega $ is stable. As $\Omega $ is any compact subdomain in $L_1$,
 then we conclude that $L_1$ is stable as well.

We next prove that $L_1$ stays at a positive distance from every point $p_1\in {\cal S}^A$:
again arguing by contradiction, if this property fails to hold for a point $p_1\in {\cal S}^A$, then
portions of $L_1$ enter in every ball $\B (p_1,\ve )$ of arbitrarily small radius.
In this setting, the local description in (D)-(D1)-(D2) for a sufficiently
small ball $\B (p_1,\ve )$ implies that either $L_1$ contains
 the punctured disk $D(p_1,*)$ that appears in (D), or $L_1\cap
 \B (p_1,\ve )$
 contains two multivalued graphs $\Sigma $ that spiral together
infinitely many times into $D(p_1,*)$
 at one side of $D(p_1,*)$. The first possibility cannot
 occur as $L_1$ is not a limit leaf of ${\cal L}_1$;
 the second possibility cannot occur either, by
 Theorem~\ref{tt2} applied to the lamination
 $\Sigma \cup D(p_1,*)$ of $\B (p_1,\ve )-\{ p_1\} $,
 because $\Sigma $ is stable. This contradiction
 shows that $L_1$ stays at a positive distance from every point $p_1\in {\cal S}^A$.

 Finally, as $L_1$ is a leaf of the lamination ${\cal L}_1$ of $\R^3-{\cal S}$
 (the singular set ${\cal S}$ was defined in (\ref{eq:S})) and
  $L_1$ stays at a positive distance from every point $p_1$ of ${\cal S}^A$, then we deduce
  that $L_1$ is complete outside $W$. As $L_1$ is stable, then Corollary~\ref{corrs} implies
  that $L_1$ extends across $W$ to a complete stable minimal surface in $\R^3$, hence
  a plane passing through $p$, which is absurd. Now the proof of the lemma is complete.
\end{proof}

\begin{proposition}
\label{propos3.5}
Item~6 of Theorem~\ref{tttt} holds.
\end{proposition}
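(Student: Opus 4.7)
The plan is to first establish the two preliminary claims --- that $L$ avoids ${\cal S}^A\cup S(\lc)$ and that the convergence of the $M_n$ to $L_1$ has multiplicity one --- and then separate into Cases~6.1 and~6.2 according to properness of $L$. Since $L$ is non-planar, item~2 already yields that $L_1$ is not a limit leaf of ${\cal L}_1$ and that $A\cap{\cal S}_{L_1}=\mbox{\rm \O}$, whence ${\cal S}_{L_1}\subset W$. If some $q\in L\cap ({\cal S}^A\cup S(\lc))$ existed, then $q\in L_1$, and item~4 would produce a limit plane $P_q$ through $q$; since both $L_1$ and $P_q$ are leaves of the regular lamination ${\cal L}_1$ meeting at $q$, we would conclude $L_1=P_q$, contradicting $L\notin {\cal P}$. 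For multiplicity one, I will mimic the positive-Jacobi-function construction from Lemma~\ref{lemma3.3}: if the multiplicity were $\geq 2$, then on a compact exhaustion of $L_1$ equipped with a normal tubular neighborhood, two disjoint graphical sheets of $M_n$ over $L_1$ would give, in the limit of their normalized differences, a positive Jacobi function on the two-sided cover of $L_1$, hence stability; because $L_1$ is complete outside $W$ (using $L\cap{\cal S}^A=\mbox{\rm \O}$), Corollary~\ref{corrs} would extend $L_1$ across $W$ to a complete stable minimal surface of $\R^3$, forcing it to be a plane and contradicting $L\notin{\cal P}$.

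In Case~6.1, $L$ is closed in $\R^3$, so ${\cal S}={\cal S}_{L_1}\subset W$. To prove uniqueness of $L$, I will rule out any other leaf $L'\subset\overline{\cal L}$. If $L'\in{\cal P}$, then $L$ is disjoint from $L'$ and lies in a halfspace bounded by $L'$; the halfspace theorem of Hoffman-Meeks~\cite{hm10}, adapted to allow the countable set $W$ of singular leaf points via catenoid barriers and Proposition~\ref{propos1} exactly as in Lemma~\ref{lemma3.4}, forces $L$ to be planar, contradicting $L\notin{\cal P}$. If $L'$ is itself non-planar, I will apply the Case~6.2 analysis below to $L'$: this produces a plane of $\overline{\cal L}$ adjacent to $L'$ and disjoint from $L$, reducing uniqueness to the previous halfspace step.

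In Case~6.2, pick any $p\in\overline{L}-L$; by definition $p$ lies in a leaf $L^\ast\neq L$ to which $L$ accumulates, so $L^\ast\in\mbox{\rm Lim}(\overline{\cal L})$ and, by item~2, $L^\ast$ is a plane, giving ${\cal P}\neq\mbox{\rm \O}$. Let ${\cal P}(L)$ be the collection of such limiting planes; as disjoint leaves of a lamination of $\R^3$ they are pairwise parallel. Connectedness of $L$ and its disjointness from each plane in ${\cal P}(L)$ force $L$ to lie in a single component $C(L)$ of $\R^3-{\cal P}(L)$, whose boundary is one or two parallel planes; each plane in ${\cal P}(L)$ must appear in $\partial C(L)$, yielding $|{\cal P}(L)|\in\{1,2\}$ and $\overline{L}=L\cup{\cal P}(L)$. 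The equality $C(L)\cap\overline{\cal L}=L$ is obtained by ruling out any further leaf $L''\subset C(L)$ exactly as in Case~6.1 (planar case: halfspace theorem; non-planar case: apply Case~6.2 to $L''$ to produce a separating plane). Properness of $L$ in $C(L)$ is then automatic since $\overline{L}\cap C(L)=L$.

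For (a), if $L_1\cap P(\ve)$ had bounded Gaussian curvature, its closure in $P(\ve)$ would be a minimal lamination with $P$ as a limit leaf, and the stable limit leaf theorem~\cite{mpr19,mpr18} together with Corollary~\ref{corrs} would force $L_1$ near $P$ to be planar, contradicting $L\notin{\cal P}$; connectedness of $L_1\cap P(\ve)$ then follows from Proposition~\ref{propos1}, since distinct components would share $P$ as a common singular limit. For (b), a point of the singular set of $\overline{L}$ inside $P(\ve)$ would, through the local pictures (D2-A)--(D2-B), exhibit infinitely many handles accumulating there, contradicting finite genus. For (c), if $L_1$ had finite genus globally, (b) would yield a singularity-free $P(\ve)$, and then Colding-Minicozzi theory (one-sided curvature estimates plus lamination closure) would force $L_1\cap P(\ve)$ to limit smoothly to $P$, contradicting (a). Finally, the disjoint-union statement is immediate from Proposition~\ref{propos1} combined with the pairwise disjointness of leaves of ${\cal L}_1$. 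The main obstacle I anticipate is the uniqueness step in Case~6.1 when the putative second leaf is non-planar, which requires running Case~6.2 recursively and very careful use of the singular halfspace theorem.
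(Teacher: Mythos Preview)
Your overall architecture matches the paper's: Assertion~\ref{ass1} for the two preliminary claims, then the proper/nonproper dichotomy (E1)/(E2), and within (E2) the planes in ${\cal P}(L)$, properness in $C(L)$, and $C(L)\cap\overline{\cal L}=L$. Your arguments for $L\cap({\cal S}^A\cup S(\lc))=\mbox{\O}$, multiplicity one, and the main statement of 6.2 are essentially the paper's, and your treatment of 6.2(a) (unbounded curvature) is close enough.

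The real gap is in 6.2(b) and 6.2(c). For (b), you write that a singularity of $\overline{L}$ in $P(\ve)$ ``would, through the local pictures (D2-A)--(D2-B), exhibit infinitely many handles accumulating there.'' This is wrong on two counts. First, the descriptions (D), (D1), (D2) concern the singular set ${\cal S}^A$ of the limit of the $M_n$, and you have already shown $L\cap{\cal S}^A=\mbox{\O}$; the singularities of $\overline{L}$ in $P(\ve)$ that matter lie in the plane $P$ itself (points of the set ${\cal S}'$ in the paper's notation), where no such Colding--Minicozzi picture is available a priori for $L_1$. Second, even where (D2) does apply, what it produces is a pair of multivalued graphs, not handles. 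The paper's proof of (b) (Assertion~\ref{ass3.8}) is by far the longest and most delicate part of the entire argument: it requires the local picture theorem on the scale of topology~\cite{mpr14}, a flux argument ruling out catenoid/Riemann/parking-garage local models (Case~(E2-A)), and, when the injectivity radius satisfies a scale-invariant lower bound, a blow-up analysis (Case~(E2-B), Claims~\ref{claim3.9} and~\ref{claim3.11}) showing that arbitrary rescalings of $L_1(\ve)$ converge to the foliation of a halfspace by planes with a single column of $S(\lc)$, followed by a handedness/connecting-loop contradiction. None of this machinery appears in your sketch.

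Your argument for (c) is also flawed independently of (b). You say that if $\overline{L}$ were a regular lamination near $P$, then ``Colding--Minicozzi theory would force $L_1\cap P(\ve)$ to limit smoothly to $P$, contradicting (a).'' But a nonflat leaf of a \emph{regular} minimal lamination of $\R^3$ can perfectly well limit to a plane with unbounded Gaussian curvature; this is precisely the content of (a) and does not contradict it. The paper instead deduces (c) from (b) together with Corollary~\ref{corol2.3} (to clear singularities of $\overline{L}$ in $C(L)$) and then invokes Corollary~1 in~\cite{mpr3}, which says that a nonplanar leaf of a minimal lamination of $\R^3$ with more than one leaf must have infinite genus.

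Two smaller points: your connectedness argument for $L_1\cap P(\ve)$ via Proposition~\ref{propos1} is not quite right (distinct components limiting to $P$ do not obviously share a singular leaf point); the paper uses instead an adaptation of Theorem~1.6 in~\cite{mr8}. And for $C(L)\cap\overline{\cal L}=L$, the paper avoids your recursive appeal to Case~6.2 by directly constructing a least-area separating surface between $L$ and a hypothetical $L'$ (Assertion~\ref{ass3.7}) and extending it via Corollary~\ref{corrs} to a plane, which is cleaner.
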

\begin{proof}
 Let $L=L_1\cup {\cal S}_{L_1}$ be a nonplanar leaf of $\overline{\cal L}$,
where $L_1$ is the leaf of the regular part ${\cal L}_1$ of $\overline{\cal L}$
defined in~(\ref{eq:reglam}) and ${\cal S}_{L_1}$ is the set of singular leaf
points of $L_1$. As the argument to prove the proposition is
delicate, we will organize it into four assertions.
\begin{assertion}
\label{ass1}
$L\cap ({\cal S}^A \cup S(\lc)) = \mbox{\rm \O}$ and the convergence of
portions of the $M_n$ to $L_1$ is of multiplicity one.
\end{assertion}
\begin{proof}
If $L$ intersects $S({\cal L})$ at a point $x$, then $L$ is a smooth minimal surface
around~$x$. Since item~4 of Theorem~\ref{tttt} implies that there passes
a plane $P_x\in {\cal P}$ through $x$, we conclude that $L=P_x$, which is impossible.
If $L$ intersects ${\cal S}^A$ at a point $y$, then $y\in L\cap {\cal S}={\cal S}_{L_1}$
where ${\cal S}$ is the singular set of $\overline{\cal L}$ defined in (\ref{eq:S}).
By item~4 of Theorem~\ref{tttt}, there passes
a plane $P_y\in {\cal P}$ through $y$, which implies that both $L_1$, $P_y$
share the singular leaf point $y$. Since $P_y$ intersects ${\cal S}$ in a closed
countable set (again by item~4 of Theorem~\ref{tttt}), then
Proposition~\ref{propos1} leads to a contradiction. Therefore, we have proved that
$L\cap ({\cal S}^A \cup S(\lc)) = \mbox{\rm \O}$. Finally, the property that
the convergence of portions of the $M_n$ to $L_1$ is of multiplicity one
follows from the proof of Lemma~\ref{lemma3.3}. Now Assertion~\ref{ass1} follows.
\end{proof}

To prove that either item~6.1 or 6.2 of Theorem~\ref{tttt}
holds, we will distinguish two cases,
depending on whether or not $L$ is proper as a set in $\R^3$.
\par
\vspace{.2cm}
\noindent (E1) {\sc Suppose that $L$ is proper in $\R^3$.}\newline
Our goal is to show that item~6.1 of Theorem~\ref{tttt} holds.
Since $L$ is proper in $\R^3$, all the points
in ${\cal S}\cap \overline{L}$ are singular leaf points
of $L_1$, in particular $L=\overline{L}$.
In this setting, the proof of the halfspace theorem  that uses catenoid barriers together with
Proposition~\ref{propos1} imply ${\cal P}=\mbox{\rm \O}$.
By item~4 of Theorem~\ref{tttt}, we have ${\cal S}^A\cup S({\cal L})=\mbox{\O }$.
Thus, ${\cal S}\subset W$ by equality (\ref{eq:S}).
To deduce item~6.1 of Theorem~\ref{tttt}, it remains to prove
that $L $ is the unique leaf of $\overline{\cal L}$. Otherwise,
$\overline{\cal L}$ contains a leaf $L'\neq
L$, and $L'$ is not flat since ${\cal P}=\mbox{\rm \O}$. Furthermore,
$L'$ is proper in $\R^3$ (if $L'$ were nonproper then
$\overline{L'}$ would contain a limit leaf that is a plane in ${\cal P}$).
Proposition~\ref{propos1} implies that $L$ and $L'$ do not
intersect.  The existence of the proper,  possibly singular surfaces
$L,L'$ contradicts the proof of the strong halfspace theorem
adapted to this singular setting via Proposition~\ref{propos1}
(see \cite{hm10,msy1}), in which one first constructs a plane between
$L$ and $L'$ and then applies the proof of the halfspace theorem.
This proves that item~6.1 of Theorem~\ref{tttt} holds, as desired.
\par
\vspace{.2cm}
\noindent (E2) {\sc Suppose that $L$ is not proper in $\R^3$.} \newline
In this second case we will demonstrate that item~6.2 of Theorem~\ref{tttt} holds,
which will finish the proof of Proposition~\ref{propos3.5}.
As $L$ is not proper in $\R^3$,
there exists a limit point $q_0$ of $L$, in the sense that
there exists a sequence of points in $L$ that
converges to $q_0$ in $\R^3$ and that is intrinsically
divergent in $L$. Therefore, $L_1$ is also nonproper
in any extrinsic neighborhood of $q_0$, which implies
that $q_0$ is not a singular leaf point of
$L_1$ and thus, $q_0$ is not contained in $L$.

\begin{assertion}
\label{ass3.6}
Through any limit point $q$ of $L$ there passes a
plane $P\in {\cal P}$. Furthermore,
every point in  such a plane $P$ is a limit point of $L$.
\end{assertion}
\begin{proof}
First note that such a limit point $q$ of $L$
cannot lie in $L$, by the discussion in the last paragraph. If $q$ lies in the regular
lamination~${\cal L}_1$, then the leaf $L'_1$ of $\cL_1$ that contains $q$
is a limit leaf of ${\cal L}_1$.
By the arguments in the proof of Lemma~\ref{lemma4.1}, the closure
$\overline{L'_1}$ must be a plane in ${\cal P}$, and the assertion holds in this case.
Then, we may assume $q\in (\overline{L_1}\cap {\cal S})-{\cal S}_{L_1}$.
By Definition~\ref{defsinglamin},
this implies that for every open neighborhood $V$ of $q$ in $\R^3$,
then $L_1\cap V$ fails to be closed in $V-{\cal S}$.
Thus one can find a sequence $\{  V_k\} _k$
of open neighborhoods of $q$ and a sequence of points $x_k\in
\overline{L_1\cap V_k}^{V_k-{\cal S}}-(L_1\cap V_k)$, $k\in \N $.
Without loss of generality, we can assume $V_k\to \{ q\} $ as $k\to
\infty $. Fix $k\in \N$. Since $x_k$ lies in the closure of $L_1\cap
V_k$ relative to $V_k-{\cal S}$, then there exists a sequence $\{
y_k(m)\} _m\in L_1\cap V_k$ with $y_k(m)\to x_k$ as $m\to \infty $.
As $x_k\in (V_k-{\cal S})-(L_1\cap V_k)$, then $x_k\notin L_1$. Thus
$\{ y_k(m)\} _m$ converges to $x_k$ in the topology of $\R^3$ but it
does not converge to $x_k$ in the intrinsic topology of $L_1$
(otherwise $x_k$ would lie in $L_1$ since $x_k\notin {\cal S}$).
This gives that $x_k\in \mbox{Lim}(L_1)$, and our previous arguments
imply that there passes a plane in ${\cal P}$ through $x_k$. Since
this happens for all $k$, $x_k\to q$ as $k\to \infty $ and ${\cal
P}$ is a closed set of planes, then there also passes a plane in
${\cal P}$ through $q$ and the assertion is proved.
\end{proof}

We continue with the proof of item~6.2 of Theorem~\ref{tttt} in case (E2).
Since $L$ is not proper in $\R^3$ and through any limit point of $L$ there
passes a plane in ${\cal P}$, a straightforward connectedness
argument shows that $\overline{L}=L\cup {\cal P}(L)$ with ${\cal
P}(L)$ consisting of one or two planes. In particular, $L$ is
proper in the component $C(L)$ of $\R^3-{\cal P}(L)$ that contains
$L$.

\begin{assertion}
\label{ass3.7}
In the above situation, $C(L)\cap \overline{\cal L}=L$.
\end{assertion}
\begin{proof}
Since $L$ is
connected and nonflat, there are no planar leaves of
$\overline{\lc}$ in $C(L)$. Reasoning by contradiction, suppose that
$L'$ is a nonflat leaf of $\overline{\lc}$ that is different from
$L$ and that intersects $C(L)$. Since $L$ and $L'$ are proper
in $C(L)$, the maximum principle together with Proposition~\ref{propos1}
imply that $L\cap L'=\mbox{\rm \O}$.
Reversing the roles of $L$ and $L'$
one can easily check that ${\cal P}(L) = {\cal P}(L')$ and $C(L) =
C(L')$. As both $L-{\cal S}$ and $L'-{\cal S}$ are properly embedded smooth surfaces in the
simply connected region $C(L)-{\cal S}$ (because ${\cal S}\cap C(L)\subset W$ is countable),
then $L\cup L'$ bounds a closed
region $X$ in $C(L)$; since the two boundary components of $X$ are
good barriers for solving Plateau problems in $X$ (in spite of being
singular by using Proposition~\ref{propos1}), a standard argument
(see Meeks, Simon and Yau~\cite{msy1}) shows that there exists a
properly embedded, least-area surface $\Sigma\subset X$ that
separates $L$ from $L'$ in $X$, and hence separates $L$ from $L'$ in $C(L)$. However, since $X$ is not
necessarily complete (note that every divergent path in $X$ with
finite length must have a limit point in ${\cal S}\cap [L\cup L'\cup
{\cal P}(L)]$), then the surface $\Sigma$ might fail to be complete.
On the other hand, Assertion~\ref{ass1} applied to $L,L'$ implies
that neither of the surfaces $L,L'$ intersects
${\cal S}^A \cup S(\lc)$, because both $L,L'$ are not flat. This implies that
${\cal S}\cap (L\cup L')\subset W$; in particular,
${\cal S}\cap [L\cup L'\cup {\cal P}(L))]$ is closed and countable.
As $\Sigma$, when considered to
be a surface in $\R^3$, is complete outside the closed countable
${\cal S}\cap [L\cup L'\cup {\cal P}(L))]$, then Corollary~\ref{corrs} implies
that $\Sigma$ extends to a complete, stable minimal surface
$\overline{\Sigma}$ in $\R^3$.
Therefore, $\overline{\Sigma}$ is a plane. This is impossible as
${\cal P}(L) = {\cal P}(L')$ but $L$ and $L'$
lie on opposite sides of a plane.
This proves the assertion.
\end{proof}

\begin{assertion}
\label{ass3.88}
Every open $\ve$-neighborhood $P(\ve)$ of a plane $P\in {\cal
P}(L)$ intersects the surface $L_1$ in a connected smooth surface
with unbounded Gaussian curvature.
\end{assertion}
\begin{proof}
After a rotation, we may assume that $P=\{ x_3=0\}
$ and $L$ limits to $P$ from above $P$. Given $\ve
>0 $ small enough so that $\{ 0<x_3\leq \ve \} \subset C(L)$, we consider
the smooth minimal surface
\begin{equation}
\label{eq:L1(ve)}
L_1(\ve )=L_1\cap \{ 0<x_3\leq \ve \} .
\end{equation}
 Note that $L_1(\ve )$ is possibly incomplete
(completeness of $L_1(\ve )$ may fail in the set ${\cal S}\cap \{
0\leq x_3\leq \ve \} $). Since ${\cal S}\subset {\cal S}^A\cup W$,
$W$ is countable and $L\cap {\cal S}^A
=\mbox{\O }$ by item~4 of Theorem~\ref{tttt}, then we may also
choose $\ve $ so that the closure $\overline{L_1(\ve )}$ in $\rth$ of
$L_1(\ve )$ does not have singularities in the plane $\{ x_3=\ve \} $.
In a similar way as in the proof of Assertion~\ref{ass3.7}, applying the proof of
Theorem~1.6 in~\cite{mr8} and using the local extendability of a
stable minimal surface in $\overline{C(L)}$ that is complete
outside a closed countable set  and has its boundary in a
plane in $C(L)$, one sees that $\{ 0\leq x_3\leq \ve \} $ intersects
$L$ in a connected set.

We next prove that the Gaussian curvature of $L_1(\ve )$ is
unbounded. Reasoning by contradiction, assume $L_1(\ve )$ has bounded
Gaussian curvature. In this case, $L_1(\ve )\cup [P-
(W\cap \overline{\cal L})^{\mbox{\footnotesize sing}}]$ is a
relatively closed set of $\{ -1<x_3<\ve \} -
(W\cap \overline{\cal L})^{\mbox{\footnotesize sing}}$ with bounded
second fundamental form, hence $L_1(\ve )\cup [P-
(W\cap \overline{\cal L})^{\mbox{\footnotesize sing}}]$ is a
minimal lamination of $\{ -1<x_3<\ve \} -
(W\cap \overline{\cal L})^{\mbox{\footnotesize sing}}$.
By Theorem~\ref{tt2}, $L_1(\ve ) \cup P$ is a minimal
lamination of $\{ -1<x_3 < \ve \}$.  In this situation with
bounded Gaussian curvature, one can
apply Lemma~1.4 in~\cite{mr8} to deduce that $L_1(\ve )$ is a graph
over its projection to $P$, in particular it is proper in the
closed slab $\{ 0\leq x_3\leq \ve \} $, which contradicts the proof
of the Halfspace Theorem. Hence, $L_1(\ve )$ has unbounded Gaussian
curvature.
\end{proof}

The main statements of item~6.2 and item~6.2(a)  of Theorem~\ref{tttt} are now
proven under the hypothesis
of Case~(E2); it remains to prove that the additional statements 6.2(b) and 6.2(c) hold to complete the
proof of Proposition~\ref{propos3.5}.
This is a technical part of
the proof, where the local picture theorem on the scale of topology~\cite{mpr14} will play a crucial role.

\begin{remark} {\em
In the first item of the next assertion, one can ask  if it is the case that every open $\ve$-neighborhood $P(\ve)$ of $P$
intersects the surface $L_1$ in a connected smooth surface
with infinite genus,  without making the additional assumption that the plane $P\in {\cal
P}(L)$  contains a singularity of $\ov{L}$. The answer to this question is unclear to the authors.
}
\end{remark}

\begin{assertion}
\label{ass3.8}
\ben
 \item If a plane $P\in {\cal
P}(L)$  contains a singularity of $\ov{L}$,
then every open $\ve$-neighborhood $P(\ve)$ of $P$
intersects the surface $L_1$ in a connected smooth surface
with infinite genus.
\item The leaf $L_1$ has infinite genus.
\een
\end{assertion}
{\it Proof.}
We first prove that item~1 of the assertion implies item~2.
Suppose that $L_1$ has finite genus and item~1 holds.  Item~1 implies that  each plane
in ${\cal
P}(L)$ contains no singularities of $\ov{L}$. As $L$ is proper in $C(L)$, then
Corollary~\ref{corol2.3} implies that $\overline{L_1}$
has no singularities in $C(L)$ (to see this,
observe that such a singularity $q$ would belong to $W$, hence
$q$ could be assumed to be isolated in $W$ by Baire's Theorem,
and now Corollary~\ref{corol2.3} applies to give a contradiction).
Hence, $\ov{L}$ is a minimal lamination of $\rth$ whose leaves are the
nonflat surface $L_1$ together with the
nonempty set of planes in ${\cal P}(L)$. The fact that $L_1$ has finite genus
genus contradicts Corollary~1
in~\cite{mpr3}, which states that every nonplanar leaf of a minimal lamination of $\rth$
with  more than one leaf has infinite genus.

We next prove item~1 holds; this will complete the proof of the assertion and the proof
of Proposition~\ref{propos3.5}. Suppose that the $(x_1,x_2)$-plane $P\in {\cal
P}(L)$  contains a singularity of $\ov{L}$.
To finish the proof of Assertion~\ref{ass3.8}, it remains to
demonstrate that for every $\ve >0$,
the surface $L_1(\ve )$ given by (\ref{eq:L1(ve)}) has infinite genus.
If this infinite genus property were
to fail, then we first choose $\ve$ sufficiently small so that
$L_1(\ve )$ has genus zero, keeping the property that
$\overline{L_1(\ve )}$ does not have singularities in the plane
$\{ x_3=\ve \} $. As $L$ is proper in $C(L)$, then
Corollary~\ref{corol2.3} implies that $\overline{L_1(\ve )}$
has no singularities in $\{ 0<x_3\leq\ve \} $.
Thus,  $L_1(\ve )$ is a
smooth, connected minimal surface with genus zero, that is complete
outside a nonempty closed countable set
\begin{equation}
\label{def:S}
{\cal S}'\subset {\cal S}\cap P
\end{equation}
(none of the points in ${\cal S}'$ can be a singular leaf point of $L_1$,
by Proposition~\ref{propos1}),
and the boundary of $L_1(\ve )$ lies in the
plane $\{ x_3=\ve \} $.
Since ${\cal S}'$ is nonempty, closed and countable, Baire's
 Theorem insures that
there exists an isolated point $q\in{\cal S}'$. After a translation
and homothety, assume $q=\vec{0}$ and ${\cal S}'\cap\B(2\delta)=\{
\vec{0}\} $ for some positive $\delta<\frac{\ve}{2}$.

Let $I_{L_1}$ be the injectivity radius function of $L_1$. We will find
the desired contradiction by discussing the cases (E2-A), (E2-B) below,
depending on whether or not $(I_{L_1})/|\cdot|$ is bounded away from zero
 in $L_1\cap \B(\delta) $
($|.|$ denotes distance to the origin in $\R^3$).
\par
\vspace{.2cm}
 \noindent
(E2-A) {\sc Suppose that $(I_{L_1})/|\cdot|$ is
not bounded away from zero in $L_1\cap \B(\delta) $.}
\newline
We will use the local picture theorem on the scale of topology
together with a flux argument to discard this case.
By Theorem~1.1 in~\cite{mpr14} (see also Remark~4.31 in the same paper),
there exists a sequence of points $\{ p_n\}_n\subset L_1$
called {\it points of almost-minimal injectivity radius,} such
that the following properties hold:
\begin{enumerate}[(F1)]
\item $p_n\to \vec{0}$ and $\frac{I_{L_1}(p_n)}{|p_n|}\to 0$ as $n \to \infty$.
\item For all $n\in \N$, there exists $\ve _n\in (0,|p_n|/2)$
such that the closure $L(n)$ of the component of $L_1(\ve )\cap \B(p_n,
\ve_n)=L\cap \B(p_n,\ve_n)$ that contains $p_n$ is compact
and has its boundary in $\esf ^2(p_n,\ve _n)$.
\item Defining $\lambda _n=1/I_{L_1}(p_n)\in \R^+$, then:
\begin{enumerate}[(F3.1)]
\item The injectivity radius function of $L_1$ restricted to $L(n)$,
denoted by $I_{L(n)}$, satisfies $\lambda _nI_{L(n)}\geq
1-\frac{1}{n}$ on $L(n)$.
\item $\l _n\, \ve _n\to \infty $ as $n\to \infty $.
\item The sequence of surfaces $\{ \widehat{L}(n):= \l _n[L(n)-p_n]\} _n$
converges as $n\to \infty $ to either a nonsimply connected, properly
embedded minimal surface $\widehat{L}(\infty )\subset \R^3$ of genus zero or
to a minimal parking garage structure in $\R^3$ with two oppositely oriented columns.
By the classification of genus zero properly embedded minimal
surfaces in $\R^3$~\cite{col1,lor1,mpr6}, the surface
$\widehat{L}(\infty )$ is either a catenoid
or a Riemann minimal example if it occurs.
\end{enumerate}
\end{enumerate}

We first consider the case where the limit object of
$\{ \widehat{L}(n)\}_n$ is a catenoid~$\widehat{L}(\infty )$.
Let $\Pi $ be the plane in $\rth$ that
intersects $\widehat{L}(\infty )$ orthogonally along its waist
circle $\Gamma$. Let $\Gamma_n\subset \widehat{L}(n)\cap \Pi $ be
nearby simple closed planar curves in $\widehat{L}(n)$ for $n$ large
and let $\g_n=p_n+\frac{1}{\l _n}\G_n$ be the related simple
closed planar curves in $L(n)$. In particular, the sequence of simple
closed curves $\gamma_n \subset L(n)$ near $p_n$ have lengths converging
to zero as $n\to \infty $, and when viewed to be sets, these curves converge
to the origin $\vec{0}$. Note that by the convex hull property, the curves
$\g_n$ are not homologous to zero in $L_1(\ve)$.

As $L_1(\ve )$ has genus zero and the $\g_n$
are not homologous to zero, then $\g _n$ separates $L_1(\ve )$
into two subdomains. Since $\{ 0<x_3\leq \ve \} $ is simply connected and
$L_1(\ve )$ is properly embedded in $\{ 0<x_3\leq \ve \} $, then $L_1(\ve )$ separates
$\{ 0<x_3\leq \ve \} $ into two components. Let $X_n$
be the closure of a component of $\{ 0<x_3\leq \ve \} -L_1(\ve )$ in which
$\gamma_n$ fails to bound a disk; after extracting a subsequence, we
can assume that $X=X_n$ does not depend upon $n$. Our previous
arguments using $L_1(\ve )$ as a barrier (see e.g., the proof of Assertion~\ref{ass3.7})
imply that $\gamma_n$ is contained in the
boundary of a connected, area-minimizing, noncompact, orientable, properly
embedded minimal surface $\Sigma_n\subset X$
(possibly incomplete, as $X$ might not be complete) with the remainder
of its boundary contained in $\partial L_1(\ve)$.
As $L_1(\ve )$ has no singularities in
$\{ 0<x_3\leq \ve \} $, then $\Sigma_n$ is a surface
with boundary in $\partial L_1(\ve)\cup\g_n$
and it is complete in $\R^3$ outside of the closed countable set
$\{ x_3=0\} \cap {\cal S}$.
Since $\Sigma_n$ is stable, then
Corollary~\ref{corrs} insures that $\Sigma _n$ extends to a complete,
orientable, stable minimal surface
$\overline{\Sigma}_n\subset \{ 0\leq x_3\leq \ve \} )$
with boundary in $\partial L_1(\ve)\cup\g_n$.
By the maximum principle, $\overline{\Sigma}_n$ is
disjoint from $P=\{ x_3=0\} $ and so
 $\overline{\Sigma}_n= \S_n$. By curvature estimates
 for stable minimal surfaces, the second
 fundamental form of $\S_n$ is bounded in a fixed
 sized neighborhood $V_n(P)$ of $P$ (size depending on $n$),
 which implies that  $\S_n$ is proper in $\rth$ (properness of $\Sigma _n$
 follows from an application of Lemma~1.4 in~\cite{mr8}
 to each component of the intersection of $\Sigma _n$ with a sufficiently small
 fixed size neighborhood of $P$ contained in $V_n(P)$).
 Therefore, $\S_n$ is a parabolic surface
 by Theorem~3.1 in \cite{ckmr1}.

Now  fix a point $p_0\in \partial L_1(\ve)\cap \{ x_3=\ve \} $. The curve $\g_n$
separates $L_1(\ve)$ into two components and let $L_1(n,\ve)$ be the component
containing $p_0$.  Note that for some regular value $\eta\in (0,\ve)$ of $x_3|_{L_1(\ve)}$
so that $\ve -\eta $ is sufficiently small,
the component $L_1(n,\ve,\eta)$ of $L_1(n,\ve)\cap \{x_3\leq \eta\}$ that contains
$\g_n$ must contain a boundary component $\partial \subset L_1(n,\ve)\cap \{ x_3=\eta\}$
intrinsically close to $p_0$ and $\partial$ does not depend on $n$.

Suppose for the moment that
\begin{equation} \label{separation}
x_3|_{L_1(n,\ve,\eta)}\geq \min(x_3|_{\g_n}).
\end{equation}
\noindent Under the above hypothesis, $L_1(n,\ve,\eta)$
is properly embedded in $\rth$ and then
Theorem~3.1 in~\cite{ckmr1} implies that  $L_1(n,\ve,\eta )$ is a parabolic surface.
Since  $L_1(n,\ve,\eta)$ is parabolic,
the arguments in the proof of Claim~4.19  in~\cite{mpr14} show that the scalar flux
of the intrinsic gradient $\nabla x_3$ of $x_3$ on $L_1(n,\ve,\eta)$ across
$\g_n\subset \partial L_1(n,\ve,\eta)$, given by
\[
F(\nabla {x_3},\g_n)=\int _{\g_n}\langle \nabla{x_3},\nu \rangle
 \]
 where $\nu$ is the inward pointing conormal to $L_1(n,\ve,\eta)$ along
its boundary, is bounded from below by the positive number
\[
-F(\nabla {x_3},\partial)=-\int _{\partial }\langle \nabla{x_3},\nu \rangle .
 \]
 But this conclusion is impossible
since the lengths of $\g_n$ are converging  to zero
as $n\to \infty$.  Thus to find the desired contradiction in the case that (E2-A)
holds with the limit object of $\{ \widehat{L}(n)\} _n$ being a catenoid,
it remains to show that \eqref{separation} holds for $n$ sufficiently large.

Next note that as the stable minimal surface $\S_n$ is parabolic, then
\begin{equation} \label{separation2}
x_3|_{\S_n}\geq \min(x_3|{\g_n)}.
\end{equation}
Inequality~(\ref{separation2})
implies that the ends of the catenoid $\widehat{L}(\infty )$ are horizontal.
Also, since $\S_n$ is parabolic and the scalar flux
of the intrinsic gradient $\nabla ^{\S_n} x_3$ across $\g_n\subset \partial \S_n$ is
converging to zero as $n\to\infty$, then similar
reasoning as in the previous paragraph implies
that the  scalar flux
of $\nabla ^{\S_n} x_3$ across $ \S_n\cap \{x_3=\eta\}$
is converging to zero as $n\to \infty$.
By curvature estimates for the stable minimal surface $\S_n$,
we conclude:
\par
\vspace{.2cm}
\noindent
(G0) The spherical image of the Gauss map of $\S_n$
along $\S_n\cap \{x_3=\eta\}$ is contained in arbitrarily small
neighborhoods of the north or south
pole of $\esf^2(1)$ for $n$ sufficiently large.
\par
\vspace{.2cm}
Arguing similarly with fluxes also we deduce that there
exists a positive  constant $C$ depending
only on curvature estimates for stable minimal surfaces such that for any point
of $\S_n$ of intrinsic distance greater than $C\cdot
$Length$(\g_n)$ from $\partial \Sigma _n$,
the normal line to $\S_n$ must make an angle of less than
$\frac{\pi}{4}$ with the horizontal.

Let $\widehat{X}(\infty )$ be the nonsimply connected component of
$\R^3-\widehat{L}(\infty )$, and let $X(n)\subset \overline{\B }(p_n,\ve _n)$ be
the related solid annular regions defined by the condition
\[
\l _n[X(n)-p_n] \mbox{ converges to } \widehat{X}(\infty ) \mbox{ as }n\to \infty .
\]
By letting $\ve_n$ converge to zero sufficiently quickly and
after replacing by a subsequence,  we can also
assume that the domains $\l _n[\partial X(n)-p_n] \cap \B(n)$ are annuli that can be expressed as
normal graphs over their projections to $\widehat{L}(\infty )$ with the $C^1$-norm of
the graphing functions less than $\frac1n$ and
so that $\l_n\ve_n =n$.

Curvature estimates for the stable minimal surface ${\Sigma
}_n$ and the flat horizontal asymptotic geometry of the catenoid $\widehat{L}(\infty )$
imply there is a constant $R>1$ such that, for $n$ sufficiently large, the components of
$\Sigma _n\cap\left[ \B (p_n,\frac{n}{2\l _n})-\B ( p_n,\frac{R}{\l _n}) \right] $
are graphs over their orthogonal projections to the $(x_1,x_2)$-plane.
Since these graphs are part of $\S_n$ which is an area-minimizing surface in  $X(n)$,
 then there is
only one such graph for $n$ sufficiently large.  Note that by picking $R$ sufficiently large, the
graph $\Sigma _n\cap\left[ \B (p_n,\frac{n}{2\l _n})-\B ( p_n,\frac{R}{\l _n}) \right] $
can be assumed to have arbitrarily small gradient.

There exists a connected compact neighborhood $U(\g _n)$ of $\g _n$
in $\S_n\cap\B (p_n,\ve_n)$ with two boundary components, $\g_n,\a_n$,
such that the component
$\Omega_n$ of ${\Sigma }_n-[U(\g_n)\cup \{ \eta\leq x_3\leq\ve\}]$ with
boundary curve  $\a_n:=\partial\Omega_n \cap \partial U(\g_n)$ satisfies:
\begin{enumerate}[(G1)]
\item $\a_n$ can be chosen so that it corresponds to the inner boundary component of the annular
graph  $\Sigma _n\cap\left[ \B (p_n,\frac{n}{2\l _n})-\B ( p_n,\frac{R}{\l _n}) \right] $.
\item After choosing $R$ sufficiently large, then the Gauss map $G_n$ of $\Omega_n$
along $\a_n$ is almost constant
and equal to the vertical normal vector of one  of the ends of $\widehat{L}(\infty )$.
\item For $n$ sufficiently large, the
intrinsic distance function $d_{\S_n}(\cdot,\partial \S_n)$ restricted to $\Omega_n$
is greater than $C\cdot $Length$(\g_n)$.  In particular,
the normal line to $\Omega_n$ must make an angle of less than
$\frac{\pi}{4}$ with the horizontal.
\end{enumerate}

 Properties (G0) and (G2) together with the stability of $\Omega _n$ imply that
 $G_n(\Omega _n)$ can be assumed to be contained in an arbitrarily small neighborhood of the north or south
 pole in $\esf^2(1)$, for $n$ sufficiently large. In particular, the orthogonal
 projection from $\Omega _n$ to $P$ is a proper submersion for $n$ large, and its restriction to
 each boundary component of $\Omega _n$ is injective.  In this situation, one can
apply Lemma~1.4 in~\cite{mr8} to deduce that for $n$ sufficiently
large, $\Omega_n$ is a graph over its projection to $P$ whose gradient has norm at most 1.

We next prove that for $n$ sufficiently large, the curve $\a_n$ is the boundary of a small
almost-horizontal disk $T_n \subset \B (p_n,\ve_n)$ that
intersects $L_1(n,\ve,\eta)$ only along $\g_n$. The construction of
this disk is clear from Figure~\ref{fig6} if,  for $n$ sufficiently large,
$L_1\cap \B(p_n,\frac{R}{\l_n})$ is the annulus $\partial X(n)\cap \B(p_n,\frac{R}{\l_n})$.
Otherwise since $L_1\cap \ov{\B}(p_n,\ve_n)$ is
compact, then
$L_1\cap \ov{\B}(p_n,\ve_n)$  would contain a compact component
$\Lambda_n$,  disjoint from the annulus $\partial X(n)$,
with its boundary in $\esf^2(p_n,\ve _n)$
and which intersects $ \B(p_n,\frac{R}{\l_n})$.
Then $\Lambda_n$  could
be used  as a barrier to construct a compact
stable minimal surface $\Lambda_n'$  in $\B(p_n,\ve_n)-L_1$ with boundary
in $\esf^2(p_n,\ve _n)$
that intersects $\B(p_n,\frac{R}{\l_n})$. By fixing $R$ and letting $n\to \infty$,
the stable minimal surfaces  $\Lambda_n'$ can then be used to prove that the limit catenoid
$\widehat{L}(\infty )$ lies on one side of a complete stable minimal surface that is a plane, which is impossible.
 This shows that $T_n$ exists.
\begin{figure}
\begin{center}
\includegraphics[width=11cm]{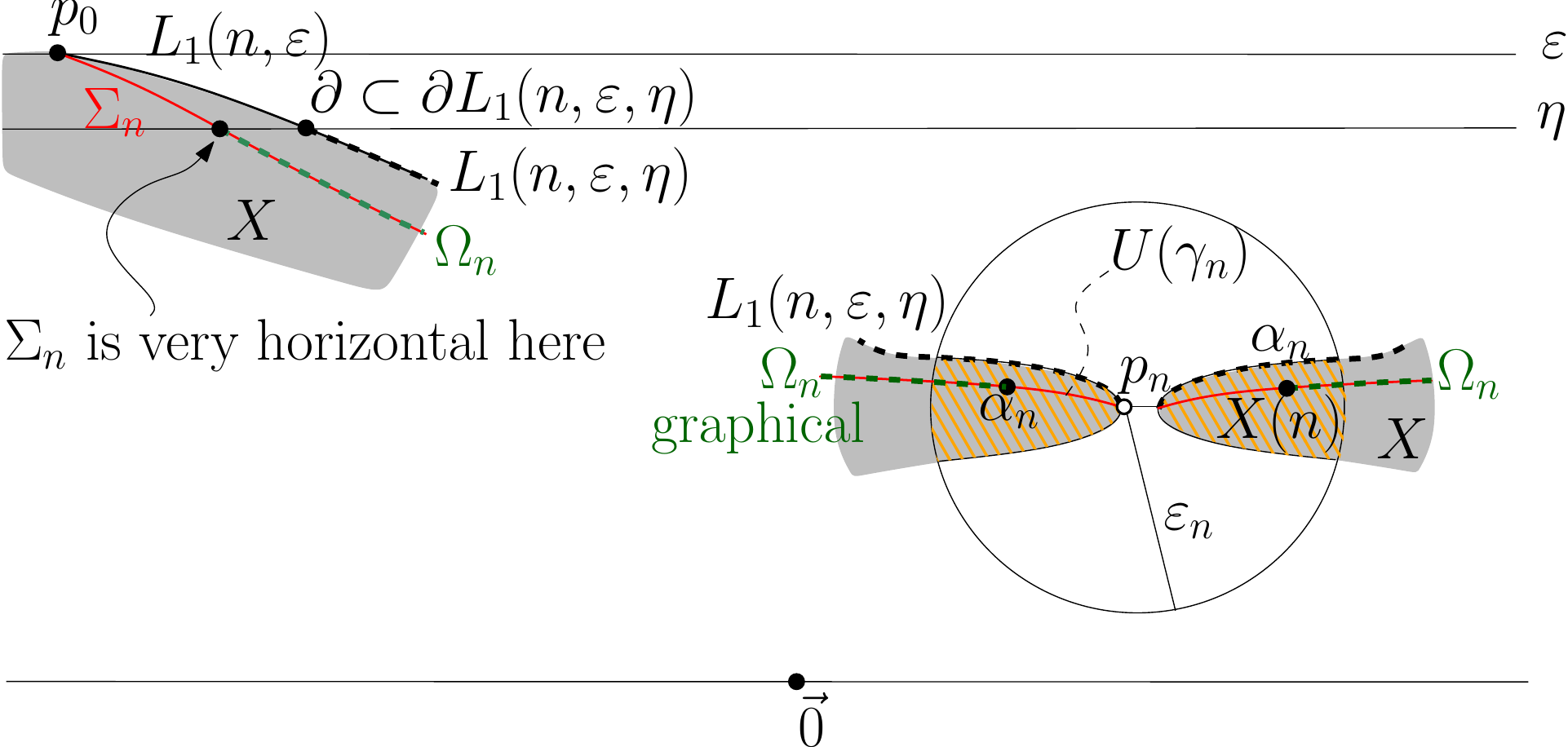}
\caption{The graphical piece $\Omega _n$ inside the stable surface $\Sigma _n$ together with
a disk $T_n$ (not represented in the figure) produce a piecewise smooth graph $Y_n$
that separates the slab $\{ 0\leq x_3\leq \eta \} $, leaving the surface $L_1(n,\ve ,\eta )$ above
$Y_n$. }
\label{fig6}
\end{center}
\end{figure}

The union $Y_n$ of $\Omega_n$ with $T_n$  is a graph over
its projection to $P$ and $Y_n$ separates the slab
$\{0\leq x_3\leq \eta\}$ into two components,
one of whose closures contains the surface $L_1(n,\ve,\eta)$.
Since a subsequence of the graphs $\Omega_n$ converges to a minimal graph
$G$ contained in
$\{0\leq x_3\leq \ve-\eta\}$ that has $\vec{0}$ in its closure,
then $G\cup \{\vec{0}\}$ is a smooth minimal surface, possibly with boundary.
By the maximum principle
for minimal surfaces, it now follows that the  graphs
$Y_n$ converge as $n\to \infty$ to the entire plane $P$.
Hence, for $n$ sufficiently large, the boundary component
$\partial$ of $L_1(n,\ve,\eta)$ must lie in the region above the
graph $Y_n$. This implies that the surface
 $L_1(n,\ve,\eta)$ is properly embedded in $\rth$ since it lies above the proper graph
  $Y_n$ (recall that $L_1$ is proper in $\{ 0<x_3\leq \ve \} $)
 Hence by Theorem~3.1 in~\cite{ckmr1}, $L_1(n,\ve,\eta)$ is a parabolic surface
 and  $x_3|_{ L_1(n,\ve,\eta)}\geq \min({x_3|_{\g_n}})$.
This completes the proof that \eqref{separation} holds.

The above arguments show that the limit object $\widehat{L}(\infty )$ of the
surfaces $\widehat{L}(n)$ defined in (F3.3)
is not a catenoid. Thus, either the limit object is a Riemann
minimal example or a minimal parking garage structure of $\rth$ with two
oppositely oriented columns. In either of these cases, there
exist homotopically nontrivial closed curves $\tau_n\subset L(n)$
converging to $\vec{0}$ with lengths converging to zero
that play the role of the waist curves $\g_n$ of the forming
catenoids in the previously considered case.
In the case that the limit object is a Riemann minimal example, then
the $\tau_n$ correspond to a circle  of the limit Riemann minimal
example  and in the case that the limit object is
a minimal parking garage structure, then the $\tau_n$  correspond to ``connection loops''
as described in item~(B) of Proposition~4.20 in~\cite{mpr14}.
Using the closed curves $\tau_n$ in place of the curves $\g_n$, the arguments
in the  case where $\widehat{L}(\infty )$ was a catenoid
can be adapted in a straightforward manner to obtain a contradiction.
Thus, Case (E2-A) does not occur at any isolated point in ${\cal S}'$.
\par
\vspace{.2cm}
 \noindent
(E2-B) {\sc Suppose that there is a constant $c>0$ such that
$I_{L_1}\geq c\ |\cdot|$ in $L_1\cap \B (\de )$.}
\newline
The contradiction in this case will be found after the application of the already proven
parts of Theorem~\ref{tttt} to an appropriate sequence of homothetic expansions of $L_1(\ve )$
from the origin.
Since ${\cal S}'\cap \B (2\de )=\{ \vec{0}\} $, we can
apply Theorem~\ref{tt2} to the minimal lamination
\[
\left( [\overline{L_1(\ve )}\cup P]-\{ \vec{0}\} \right) \cap \B (\de )
\]
of $\B (\de )-\{ \vec{0}\} $ to conclude that there exists a sequence of points
$\{p_n \}_n \subset L_1(\ve )$ converging to $\vec{0}$ such that
$|K_{L_1}| (p_n)|p_n|^2\geq n$ for all $n$. Consider the sequence of
 embedded minimal surfaces with boundary
\[
\widehat{L}'(n) = \frac{1}{|p_n|} [L_1(\ve)\cap \B (\de )],
\]
all of which have genus zero. Note that the boundary of $\wh{L}'(n)$
lies in the horizontal plane at height $\ve /|p_n|\to \infty $ and that
by our hypothesis in (E2-B), the injectivity radius function
$I_{\widehat{L}'(n)}$ of $\widehat{L}'(n)$ satisfies
\begin{equation}
\label{eq:inj}
I_{\widehat{L}'(n)}(x)=\frac{1}{|p_n|}I_{L_1}\left( |p_n|x\right) \geq c\, |x|, \qquad \mbox{ for all }x\in
\widehat{L}'(n).
\end{equation}
Given $p\in \R^3$ and $\tau > 0$,
consider the conical region
\begin{equation}
\label{eq:cone}
C^+(p,\tau ) = \{ (x_1 ,x_2 ,x_3 ) \ | \ (x_1 - x_1 (p))^2 + (x_2 - x_2(p))^2 < \tau ^{-2} (x_3 - x_3(p))^2 \}
\end{equation}
with vertex $p$ and opening angle $\a $ with respect to the positive $x_3$-axis such that $\cot \a =\tau $.
A consequence of the scale invariant lower bound (\ref{eq:inj}) for $I_{\wh{L}'(n)}$
together with the intrinsic version of the one-sided curvature estimate by Colding-Minicozzi
(Corollary~0.8 in~\cite{cm35}) is that
\begin{enumerate}[(H)]
\item Given $a>0$ small, there exists $\tau >0$ such that for every $r>0$,
$\wh{L}'(n)-[\B (r)\cup C^+(\vec{0}, \tau )]$
has bounded
Gaussian curvature on compact sets, with the bound independent of $n$, and for $n$ sufficiently large, the
components in this set consist of graphs and multivalued graphs
over their projections to the $(x_1,x_2)$-plane $P$, with the norms of the gradients
of the graphing functions being less than $a$.

\end{enumerate}
In particular, the points $\frac{p_n}{|p_n|}$
lie in $\esf^2(1)\cap C^+(\vec{0},\tau )$ for $n$ sufficiently large.

Another consequence of (\ref{eq:inj}) is that the
sequence $\{ \widehat{L}'(n)\}_n$ has locally positive
injectivity radius in the open set $B=\R^3-\{ \vec{0}\} $. Also observe that
$\vec{0}$ lies in the closure of $\widehat{L}'(n)$ for all $n$.
Applying the already proven conclusions of Theorem~\ref{tttt}
before the list of items $1,\ldots ,7$ to the closed countable set
$W=\{ \vec{0}\} $ and to the sequence of minimal surfaces
$\{ \widehat{L}'(n)\} _n$, we conclude that there exists a
(possibly empty) closed subset ${\cal S}^B$ of $\R^3-\{ \vec{0}\} $
 and a (regular) minimal lamination $\widehat{\cal L}$ of
 $\R^3-[\{ \vec{0}\} \cup {\cal S}^B]$ such that:
\begin{enumerate}[(H1)]
\item After passing to a subsequence (denoted in the same way),
$\{ \widehat{L}'(n)\} _n$ converges $C^{\a}, \, \a\in (0,1)$, to $\widehat{\cal L}$ in
$\R^3-[\{ \vec{0}\} \cup {\cal S}^B\cup S(\widehat{\cal L})]$, where
$S(\widehat{\cal L})\subset \widehat{\cal L}$ is
the singular set of convergence of the $\widehat{L}'(n)$ to
$\widehat{\cal L}$.
\item The closure of $\widehat{\cal L}$ relative to $B$
has the structure of a possibly singular minimal lamination of $B$
with related regular lamination $\widehat{\cal L}$ and singular set
${\cal S}^{B}$.
\item The closure $\overline{\widehat{\cal L}}$ of $\widehat{\cal L}$ in $\R^3$ has the structure of a
possibly singular minimal lamination of $\R^3$, whose singular set
$\widehat{\cal S}$ satisfies $\widehat{\cal S}\subset {\cal S}^B\cup
\{ \vec{0}\} $.
\end{enumerate}
Clearly, $\widehat{\cal L},{\cal S}^B$ and $\widehat{\cal S}$ are contained
in $\{ x_3\geq 0\} $. Since by construction the curvatures of the surfaces
$\widehat{L}'(n)$ are unbounded on $\esf ^2(1)$,
then ${\cal S}^B \cup S(\widehat{\cal L})\neq \mbox{\rm \O}$.

By property (H) above, we deduce that ${\cal S}^B\cup S(\widehat{\cal L})$
lies in $C^+(\vec{0},\tau )$.
As the absolute Gaussian curvature of the $\widehat{L}'(n)$ at
$\frac{p_n}{|p_n|}$ is at least $n$, we also deduce that ${\cal S}^B\cup S(\widehat{\cal L})$
intersects $\esf^2(1)$ at some point $x_0\in C^+(\vec{0},\tau )$.

\begin{claim}
\label{claim3.9}
 $\overline{\widehat{\cal L}}$ is the foliation of the closed upper
halfspace of $\rth $ by horizontal planes
(in particular, $\widehat{\cal S}={\cal S}^B=\mbox{\rm \O}$),
and $S(\widehat{\cal L})$ is the positive $x_3$-axis.
\end{claim}
\begin{proof}
Observe that we cannot apply item~7 of
Theorem~\ref{tttt} since it has not
been proved yet. Instead, we argue as follows.
Suppose for the moment that
$\overline{\widehat{\cal L}}$ does not contain nonflat leaves.
Then, the leaves of $\overline{\widehat{\cal L}}$ are horizontal planes and
$\widehat{\cal S}={\cal S}^B=\mbox{\O }$.
Since $x_0\in [{\cal S}^B\cup S(\widehat{\cal L})]\cap\esf^2(1)$
  and the sequence $\{ \widehat{L}'(n)\} _n\subset \{ x_3>0\} $
  is locally simply connected in
$\R^3-\{ \vec{0}\} $, then it follows from Corollary~0.8 in~\cite{cm35}
and from Meeks' $C^{1,1}$-regularity
theorem~\cite{me25} that $S(\widehat{\cal L})=\{ (0,0,x_3)\ | \ x_3 > 0\} $.
Thus, in order to finish the proof of Claim~\ref{claim3.9} we will
suppose that $\overline{\widehat{\cal L}}$ contains a nonflat leaf $L'$
and we will find a contradiction.

By definition of leaf of a singular lamination, we can decompose
$L'=L_1'\cup {\cal S}_{L_1'}$ where
$L_1'$ is a leaf of the regular lamination $\widehat{\cal L}$ and
${\cal S}_{L_1'}$ is the set of singular leaf points of $L_1'$.
Note that $L_1'$ is not flat, and so the convergence of
portions of the $\widehat{L}'(n)$ to $L_1'$ is of multiplicity
one (see item~5 of Theorem~\ref{tttt}).
As the $\widehat{L}'(n)$ have genus zero, then the same
holds for $L_1'$.

Since $x_0\in {\cal S}^B\cup S(\widehat{\cal L})$, then item~4 of Theorem~\ref{tttt}
implies that there passes a plane $P_{x_0}$ through $x_0$, such that $P_{x_0}$
is a leaf of $\overline{\widehat{\cal L}}$. Recall that we have also proven item~6
of Theorem~\ref{tttt} except for the property
of $L_1'(\ve')=L_1'\cap \{ 0<x_3\leq \ve'\} $ having infinite
genus for every $\ve'>0$ when $\ov{L}_1'$
has a singularity in the $(x_1,x_2)$-plane.
Consider the closure $\overline{L'}$ of $L'$ in $\R^3$, which has the
structure of a possibly singular minimal lamination.
As by construction $L'$ is contained in $\{ x_3\geq 0\} $, then the
already proven part of item~6 of Theorem~\ref{tttt} applies to $L'$  and gives that
$\overline{L'}$ is contained in a closed slab or halfspace of $\R^3$, which must be
contained in $\{ x_3\geq 0\}$. Therefore, there
exists a collection ${\cal P}(L')\subset \overline{L'}$
consisting of one or two planes contained in $\{ x_3\geq 0\} $,
such that $\overline{L'}=L'\cup {\cal P}(L')$, $L'$ is proper in a component
$C(L')$ of $\R^3-{\cal P}(L')$ and $C(L')\cap \overline{\widehat{\cal L}}=L'$.

Let Sing$(\overline{L'})$ be the set of singularities of $\overline{L'}$.
Clearly, Sing$(\overline{L'})\subset \widehat{\cal S}\subset {\cal S}^B\cup \{ \vec{0}\} $.
Note that the following properties hold.
\begin{enumerate}[(J1)]
\item Through every point $y\in \overline{L'}\cap [{\cal S}^B\cup \{ \vec{0}\}] $
there passes a plane in $\overline{L'}$: this follows from item~4 of Theorem~\ref{tttt} if
$y\neq \vec{0}$. In the case that $y=\vec{0}$, then $\vec{0}$ cannot be a singular leaf point
of $L'_1$ by Corollary~\ref{corol2.3}; hence $L'_1$ limits to $P$ and we can take
$P$ as the desired plane in $\overline{L'}$ passing through $y$.

\item
Let $P'$ be a horizontal plane contained in $\overline{L'}$ such that $x_3(P')>0$. Then,
$P'\cap \left[ {\cal S}^B\cup \mbox{Sing}(\overline{L'})\right] $ $=P'\cap {\cal S}^B$
is a discrete set by the locally simply connected
property of $\widehat{L}'(n)$ in $\{ x_3>0\} $, and hence it is a finite set (since
$P'\cap {\cal S}^B$ lies in $C^+(\vec{0},\tau )$). Actually,
$P'\cap {\cal S}^B$ consists of at most two points:
this follows from 
a straightforward adaptation of
the connecting loop argument in the proof of Lemma~3.3 in~\cite{mpr14}
using that $L_1'$ has genus zero.
\item There are no singularities of $\overline{L'}$ in $\{ x_3=0\} -\{
\vec{0}\} $, by property (H).
\end{enumerate}

From properties (J1), (J2) and (J3) we conclude that:
\begin{description}
\item[{\rm (J4)}] Sing$(\overline{L'})\cap C(L')=\mbox{\O }$,
because  there are no planes of $\overline{L'}$ in $C(L')$.
\item[{\rm (J5)}] Either $P=\{ x_3=0\} $ is the lower boundary plane
of $C(L')$, in which case Sing$(\overline{L'})$
consists of at most three points, or $P$ lies strictly below the planes
in ${\cal P}(L')$, in which
case Sing$(\overline{L'})$ consists of at most four points.
\end{description}

Let $P'$ be a plane
in ${\cal P}(L')$ with positive height, which
exists because $P_{x_0}$ exists.
The final contradiction that will finish the proof of
Claim~\ref{claim3.9} will be a consequence
of the following three contradictory
properties:
\begin{enumerate}[(K1)]
\item $\overline{L'}$ has at most one singularity on~$P'$.

\item $\overline{L'}$ cannot have exactly one singularity on~$P'$.

\item $\overline{L'}$ has at least one singularity on $P'$.
\end{enumerate}

We now prove (K1) by contradiction: assume
that $\overline{L'}$ has two singularities occurring at distinct points
$q_1,q_2$ in a plane $P'\in {\cal P}(L')$ at positive height. As the injectivity radius functions of
the surfaces
$ \widehat{L}'(n)$ are uniformly bounded away from zero nearby $q_1,q_2$ by (\ref{eq:inj}),
then the description in (D2) gives that $L'$ has the appearance around the point
$q_i$, $i=1,2$, of a disk with the geometry of a
spiraling double staircase that limits from above or below to a disk in $P'$
centered at the singularity $q_i$.
The same type of connecting loop argument mentioned in (J2) above
together with the fact that the surfaces $\widehat{L}'(n)$ have genus zero imply that the
spiraling double staircases at $q_1,q_2$ are oppositely handed.
Furthermore, we can modify slightly the flux-type arguments in the proof of
Proposition~4.18 in~\cite{mpr14} to find a contradiction in this case; roughly
speaking, these arguments
use connecting loops $\G '_n\subset \widehat{L}'(n)$ that converge as
$n\to \infty $ to the twice covered horizontal segment that joins
$q_1$ to $q_2$, and lead to a contradiction to
the fact that the absolute value of the scalar flux of $\nabla x_3$
along the curves $\G'_n$ tends to zero as
$n\to \infty $. 
This proves  (K1).

To prove property (K2), suppose that $\overline{L'}$ has exactly one singularity $p$
at a plane $P'\in {\cal P}(L')$ at positive height. Suppose for the moment that $P'$
is the lower boundary plane of $C(L')$. Observe that as $x_3(P')>0$, then
there exists a positive constant $c'$ such that for every point $x=(x_1,x_2,x_3)\in
\overline{C(L')}$,  it holds
$|x|\geq c'\, |x-p|$ (observe that this inequality uses that the distance from $x$ to the
origin  is greater than some positive number).
This inequality and~(\ref{eq:inj}) imply that
if $x\in \widehat{L}'(n)\cap \{ x_3\geq x_3(P')\} $, then
\begin{equation}
\label{eq:inj1}
I_{\widehat{L}'(n)}(x) \geq c_1\, |x-p|,
\end{equation}
for $c_1=c\cdot c'>0$.

Given $\tau > 0$, consider a sufficiently shallow conical
region $C^+(p, \tau )$ defined in (\ref{eq:cone})
so that the singularities of $\overline{L'}$ above $P'$,
if they exist, lie in $C^+(p,2\tau)$ (recall we have at most two
of these singularities, both at the same horizontal plane $P''$
strictly above $P'$). It is straightforward to deduce that there
exists $a=a(\tau )\in (0,1)$ such that
\begin{equation}
\label{dist}
\mbox{dist}_{\R^3}\left( x, \mbox{Sing}(\overline{L'})\right) \geq a\ | x-p|
\quad \mbox{ for all $x\in x_3^{-1}([x_3(P'),x_3(P'')])-C^+(p,\tau )$,}
\end{equation}
where by convention $x_3(P'')=+\infty $ if $P''$ does not exist. Clearly, we can assume $c_1\in (0,a)$
(because inequality (\ref{eq:inj1}) stays true if we choose a smaller positive constant $c_1$).

We claim that the injectivity radius function of $L_1'$ admits a scale invariant bound of the form
\begin{equation}
\label{eq:inj2}
\frac{I_{L_1'}(x)}{|x-p|}\geq c_2>0 \qquad \mbox{in $L_1'\cap [\R^3-C^+(p,\tau )]$,}
\end{equation}
for some $c_2>0$. The proof of (\ref{eq:inj2}) is by contradiction: suppose on the contrary that
$\displaystyle \frac{I_{L_1'}(x_k)}{|x_k-p|}\to 0$ as $k\to \infty $ for some sequence of points $x_k\in
L_1'\cap [\R^3-C^+(p,\tau )]$. For $k\in \N$ fixed, take a sequence of points $x_{k,n}\in \wh{L}'(n)
\cap \{ x_3> x_3(P')\} $ converging to $x_k$ as $n\to \infty $ (recall that $x_k$ lies in the
regular part $L_1'$ of $L'$). Then,
\begin{equation}
\label{eq:inj3}
I_{\wh{L}'(n)}(x_{k,n})\stackrel{(\ref{eq:inj1})}{\geq }c_1\, |x_{k,n}-p|\stackrel{(\star )}
{>}\frac{c_1}{2}\, |x_k-p|,
\end{equation}
where $(\star )$ holds for $n$ sufficiently large. Inequality (\ref{eq:inj3})
ensures that the intrinsic metric ball $B_{\wh{L}'(n)}(x_{k,n},\frac{c_1}{2}\, |x_k-p|)$ is a
geodesic disk in $\wh{L}'(n)$, i.e., the exponential map in $\wh{L}'(n)$ with base point $x_{k,n}$
restricts as a diffeomorphism to the disk of radius $\frac{c_1}{2}\, |x_k-p|$
centered at the origin in the tangent plane $T_{x_{k,n}}\wh{L}'(n)$. As $n\to
\infty $, the disks $B_{\wh{L}'(n)}(x_{k,n},\frac{c_1}{2}\, |x_k-p|)$ converge smoothly
(with multiplicity one) to
the intrinsic metric ball $B_{L_1'}(x_k,\frac{c_1}{2}\, |x_k-p|)$,
because both surfaces $B_{L_1'}(x_k,\frac{c_1}{2}\, |x_k-p|)$,
$B_{\wh{L}'(n)}(x_{k,n},\frac{c_1}{2}\, |x_k-p|)$ are contained for $n$ sufficiently large
in the extrinsic ball $\B (x_k,c_1\, |x_k-p|)$, and this extrinsic ball does not contain
points of Sing$(\overline{L'})$ by (\ref{dist}) (recall that $0<c_1<a$).
The continuity of the injectivity radius function under smooth limits
(Erlich~\cite{ehr1} and Sakai~\cite{sa2}) implies that
$I_{L_1'}(x_k)\geq  \frac{c_1}{2}\, |x_k-p|$, which contradicts our hypothesis.
This proves our claimed inequality (\ref{eq:inj2}) for some $c_2>0$.

Observe that as the surfaces $\wh{L}'(n)$ are
locally simply connected away from $\{ \vec{0}\} $,
then the description (D)-(D2) applies. In particular, $L_1'$ contains a main component locally
around $p$, whose intersection
with $x_3^{-1}(x_3(P'),x_3(P''))-C^+(p,\tau _1)$ is a pair of $\infty $-valued
graphs over its projection to the punctured plane $P'-\{ p\} $, for some $\tau _1\in (0,\tau)$.
These two $\infty $-valued graphs can be connected by curves of uniformly bounded length arbitrarily close to $p$,
by the local description (D2). The scale invariant lower bound (\ref{eq:inj2})
for $I_{L_1'}$ is sufficient to apply
the arguments in page 45 of Colding and Minicozzi~\cite{cm25}.
The existence of such $\infty $-valued graphs over the punctured plane $P'-\{ p\} $ contradicts
Corollary~1.2 in~\cite{cm26}, thereby finishing the proof of property (K2) in the particular
case that $P'$ is the lower boundary plane of $C(L')$. If $P'$ is the upper boundary plane of
$C(L')$ and the lower boundary plane of $C(L')$ is not $P=\{ x_3=0\} $,
then the same reasoning holds after reflecting in $P'$ (because $|x|\geq c'\, |x-p|$ still holds in this case
for some $c'>0$ and for all $x\in C(L')$).
Finally, if $P'$ is the upper boundary
plane of $C(L)$ and its lower boundary plane is $P$, then $|x|\geq c'\, |x-p|$ holds for
each point in $\overline{C(L')}-C^+(p,\tau )$ where $\de $ is chosen so
that $\vec{0}\in C^+(p,2\tau)$
(in other words, with $\vec{0}$ playing the role of the singularities of
$\overline{L'}$ above $P'$ of the previous case); therefore, in this
last case the arguments above also hold after reflecting in $P'$. This finishes the proof of property (K2).

To demonstrate property (K3), suppose that $\overline{L'}$ has no singularities in $P'$.
Then,  the curvature of $L'$ in a neighborhood
of $P'$ is bounded (because the singular set of
$\overline{L'}$ is disjoint from $P'$ and the injectivity radius function of $L'$
is bounded away from zero in a neighborhood of $P'$, as follows from (\ref{eq:inj})
and from the continuity of the injectivity radius function
under smooth limits~\cite{ehr1,sa2}, so the one-sided
curvature estimates of Colding and Minicozzi apply).
In this situation, Lemma~1.4 in~\cite{mr8} together with the proof of the
halfspace theorem produce a contradiction, which proves (K3).  Now Claim~\ref{claim3.9}
follows.
\end{proof}

\begin{claim}
\label{claim3.11}
  Given $\{ \l _n\} _n\subset (1,\infty )$ with $\l _n\to \infty $ as $n\to \infty $, the sequence
$\{ \l _nL_1(\ve ) \} _n$ converges  to the foliation of
the closed upper halfspace of $\R^3-\{ \vec{0}\} $ by horizontal
planes, and the singular set of convergence
of $\{ \l _nL_1(\ve ) \} _n$ is the positive $x_3$-axis.
\end{claim}
\begin{proof}
Assume that $\tau,\de>0$ are chosen sufficiently small so that the tangent
planes to points of $L_1$ make an angle of less than $\frac{\pi}{4}$
with the horizontal
at points of $L_1\cap [\B(\de)-C^+(\vec{0}, \tau )]$; this is possible by the intrinsic
one-sided curvature estimates in~\cite{cm35} and our assumption (E2-B).
Furthermore, these one-sided curvature estimates and the fact that $L_1(\ve)$ is proper in the
$\{0<x_3\leq\ve\}$ imply that for any  $r\in (0,\frac{\de}{2})$ fixed
and for $\be>0$ sufficiently small, each point in the set
\[
\a _r(\be ):=[L_1(\ve )\cap\{(x_1,x_2,x_3)\mid x_1^2+x_2^2=r^2, \, x_3\leq \be r\}]- C^+(\vec{0}, \tau )
\]
lies on a component of $\a _r(\be )$
that is either a graph over the circle  $C(r)=P\cap \{x_1^2+x_2^2=r^2\}$
or an infinite spiraling  ``graphical'' arc,  limiting from above
to $C(r)$. Recall
that we chose points $p_n\in L_1(\ve)$ that converge to
$\vec{0}$ and that are blow-up points on the scale of curvature,
around which one has a Colding-Minicozzi
picture with a pair of multigraphs that extends sideways by Claim~\ref{claim3.9}, for $n$ large,
all the way to the cylinder $\{x_1^2+x_2^2=r^2\}$ for any $r\in (0,\frac{\de}{2})$ fixed.
Therefore, we
deduce that for $\de>0$ sufficiently small and for any $r\in (0,\frac{\de}{2})$ fixed,
we have that
\[
\a _{r}:=(L_1(\ve )\cap\{x_1^2+x_2^2=r^2\})- C^+(\vec{0}, \tau )
\]
contains  a pair of spiraling ``graphical'' arcs, each one limiting from above
to the circle $C(r)$.

 We first show that for $r>0$ sufficiently small,
every component of $L_1(\ve )\cap \B (2r)$ is
simply connected.
Otherwise for every $m\in \N$ sufficiently large, there exists a
simple closed curve $\G_m \subset L_1(\ve )\cap \B(1/m)$
that separates $L_1(\ve )$,  $\G_m$
is homologically nontrivial at one of the sides of $L_1(\ve )$
 and $\G_m$ bounds a complete, noncompact,
 embedded, stable minimal surface
$F_m\subset \{0<x_3\leq \ve\}$, which is properly embedded
in the closure of a component of $ \{0<x_3\leq \ve\}-L_1(\ve )$,
such that
$\G_m\subset \partial F_m\subset [\{x_3=\ve\} \cap L_1(\ve )]\cup \G_m$.
By curvature estimates for stable minimal surfaces,
for $m$ sufficiently large,
 $F_m$
intersects $\{ x_1^2+x_2^2=r^2\} $
in at least one almost-horizontal
circle\footnote{
By curvature estimates, $F_m$ is very flat
nearby $\{ x_1^2+x_2^2=r\} $ for $m$ sufficiently large; as $\G_m$ is arbitrarily close to
height zero but $F_m$ is contained in $\{ x_3>0\} $, then
$F_m$ is very horizontal nearby $\{ x_1^2+x_2^2=r\} $; hence either
$F_m\cap \{ x_1^2+x_2^2=r\} $ contains an almost-horizontal circle, or it contains an
almost-horizontal long spiraling arc. This last possibility can be ruled out as $F_m$ has been
constructed by a standard procedure as a limit of area-minimizing surfaces, which cannot
have this multigraph appearance.}
and these circles converge to $C(r)$ as $m\to \infty $.
However, for $m$ large $F_m$
would intersect the pair of spiraling arcs in
$\a_{r}$, giving a contradiction.  This contradiction proves that
every component of $L_1(\ve )\cap \B(2r)$ is simply connected (for $r>0$ chosen sufficiently small).

We now prove Claim~\ref{claim3.11} by contradiction.
Suppose that for some sequence $\l _n\to \infty $, a subsequence of
$\{\l _nL_1(\ve )\} _n$ does not converge to the foliation of $\{ x_3\geq 0\} -\{
\vec{0}\} $
by horizontal planes. Observe that the sequence
$\l _nL_1(\ve )$ has locally positive injectivity radius in $\R^3-\{ \vec{0}\} $,
as we are assuming $I_{L_1}\geq c\ |\cdot |$ in this case (E2-B).
Also observe that $\vec{0}$
lies in the closure of $\l _nL_1(\ve )$ for all $n$. Applying the already proven conclusions of
Theorem~\ref{tttt} before the list of items 1,...,7 to the closed countable set $W = \{ \vec{0}\} $
and to the sequence of minimal surfaces $\{ \l _nL_1(\ve )\} _n$ , we
conclude that there exists a (possibly empty) closed subset ${\cal S}'$ of $\R^3-\{ \vec{0}\} $
 and a (regular) minimal lamination $\widehat{\cal L}'$ of
 $\R^3-[\{ \vec{0}\} \cup {\cal S}']$ such that after passing to a subsequence (denoted in the same way),
$\{ \l _nL_1(\ve )\} _n$ converges $C^{\a}$, \, $\a\in (0,1)$, to $\widehat{\cal L}'$ in
$\R^3-[\{ \vec{0}\} \cup {\cal S}'\cup S(\widehat{\cal L}')]$, where
$S(\widehat{\cal L}')\subset \widehat{\cal L}'$ is
the singular set of convergence of the $\l _nL_1(\ve )$ to
$\widehat{\cal L}'$. The arguments in the proof of Claim~\ref{claim3.9} can be adapted to
demonstrate that
there are no points of ${\cal S}'\cup S(\widehat{\cal L}')$
in $\{ x_3>0\} $, and thus,
$\widehat{\cal L}'$ consists of $P-\{ \vec{0}\} $ together with a single leaf $L'$,
which is a smooth, nonflat surface that is
proper in $\{ x_3>0\} $. Furthermore, the multiplicity of the convergence of
$\{ \l _nL_1(\ve )\} $ to $L'$ is one, as $L'$ is not flat. Since every component of
$L_1(\ve )\cap \B(2r)$ is simply connected for $r>0$ sufficiently small, a standard
lifting argument and the convex hull property imply that $L'$ is simply connected.

As $L'$ does not extend through the origin, then Theorem~\ref{tt2} produces a sequence
of points $q_n\in L'$ such that $q_n\to \vec{0}$ and
$|K_{L'}|(q_n)|q_n|^2\to \infty $ as $n\to \infty $.
Without loss of generality,
we can assume that the $q_n$ are points of
almost-maximal Gaussian curvature, in the sense of Theorem~1.1
in~\cite{mpr20}: in particular, the sequence of translated and rescaled surfaces
$\mu _n(L'-q_n)$ converges (after passing to a subsequence) to
a helicoid, where $\mu _n=1/\sqrt{|K_{L'}|(q_n)}$.
This limit helicoid is vertical, since the scaled surfaces
$\frac{1}{|q_n|}L'$ converge to the foliation of $\{ x_3\geq 0\} -\{ \vec{0}\} $ by horizontal
planes, as follows from Claim~\ref{claim3.9} adapted to this situation. This implies that nearby
$q_n$, $L'$ has a point $y_n$ where the tangent plane $T_{y_n}L'$ is vertical and the intersection
$L'\cap (T_{y_n}L')\cap \B(2)$
contains an analytic arc $\be_n$ that is arbitrarily close (for $n$ sufficiently
large)
in the $C^1$ norm to the straight line segment $(T_{y_n}L')\cap \{(x_1,x_2,x_3)\mid x_3=x_3(y_n)\}
\cap \B(2)$, and so we may assume that $\be_n$ has length less than $5$.
Also, note that the
arcs $\be _n$ converge after passing to a subsequence to a straight line segment of length $4$
 passing through the origin, and that for $n$ large enough, $\wt{\be }_n:=\be _n\cap
\{ (x_1,x_2,x_3)\ | \ x_1^2+x_2^2\leq 1\} $
joins the two spiraling arcs in $L'\cap
\{ (x_1,x_2,x_3)\ | \ x_1^2+x_2^2=1\} $. After replacing by a subsequence, we will assume that
the last sentence holds for all $n\in \N$.

Consider the arc $\G \subset L'$ consisting of $\wt{\be }_1$ together
with the two infinitely spiraling arcs in $L'\cap
\{ (x_1,x_2,x_3)\ | \ x_1^2+x_2^2=1\} $ with the same end points as
$\wt{\be }_1$ and which limit to the circle
$\{ x_1^2+x_2^2=1\} \subset P$. Observe that $\G $ is a proper arc in $L'$,
and since
$L'$ is simply connected, then $\G $ separates $L'$
into two components. Let $D$ be the closure of the component of $L'-\G$ that, near its boundary,
lies in $\{ (x_1,x_2,x_3)\ | \ x_1^2+x_2^2\leq 1\}$. Note that $D$ is topologically a disk with one end
(the boundary $\partial D$ is a Jordan arc).
For every $n\geq 2$, let $D_n$ be the compact subdisk of $D$ bounded by $\wt{\be}_1\cup \wt{\be }_n$.
Thus, $\{ D_n\} _{n\geq 2}$ is a compact increasing exhaustion of $D$.
Assume for the moment that the following property (P)
holds and we will finish the proof of Claim~\ref{claim3.11} (we will prove property (P) later).
\begin{enumerate}[(P)]
\item The function $x\in D\mapsto I_{L'}(x)/|x|$ is bounded.
\end{enumerate}

We next prove that the diameter of $D$ is finite. Let $d_D(\cdot,\cdot)$ denote the Riemannian
distance function in $D$.
Arguing by contradiction, we can find sequences
of points $a_n,b_n\in D$, such that $d_D(a_n,b_n)$
becomes arbitrarily large. Since $L'-C^+(\vec{0},\tau )$ consists of multivalued graphs with bounded gradient
and the arcs $\wt{\be }_n$ have uniformly bounded lengths,
then we can assume after replacement that $a_n,b_n$ both lie in $C^+(\vec{0},\tau )$.
Since $\{ D_n\} _{n\geq 2}$ is a compact
exhaustion of $D$ and the diameter of each $D_n$ is finite, then we can assume, without
loss of generality that after choosing a subsequence, $b_n\in D-D_n$ and
$d_D(a_n,b_n)>n$.  Let $\sigma_n$ be a smooth arc in $D$ with length less than $ d_D(a_n,b_n)+1$.
We claim that the points $a_n$ can also be chosen to diverge in $D$.  Otherwise we may assume
 that after replacing by a subsequence,
for all $n\in \N$, $a_n\in D_{j_0}$ for some $j_0\in \N,$ $j_0\geq 2$.
Thus for each integer $k\in [j_0,n-1]\in \N$, there is a point $a(n,k)\in \sigma_n\cap (D_{k+1}-D_k)$,
and then,
\[
n<d_D(a_n,b_n)\leq d_D(a_n,a(n,k))+d_D(a(n,k),b_n)\leq \mbox{diameter}(D_{k+1})+d_D(a(n,k),b_n).
\]
As $n-\mbox{diameter}(D_{k+1})$ can be made arbitrarily large for some choices of $k,n$ with $n\to \infty $
and $k\in [j_0,n-1]$, then we conclude that
given $k\in \N$, $k\geq 2$, there is an $n(k)\in \N$ such that
$d_D(a(n(k),k),b_{n(k)})>k$. Clearly this shows that we may assume that the sequence $a_n$ can also be chosen to diverge in $D$.
 Hence, we now have that the sequences $\{ a_n\} _n$ and $\{ b_n\} _n$ are both diverging in $D$ and
as they lie in $C^+(\vec{0},\de )$, then in $\rth$  both sequences are converging to $\vec{0}$ as $n\to \infty $. In particular,
property (P) gives that for $n$ large, both $I_{L'}(a_n)$, $I_{L'}(b_n)$ can be taken arbitrarily small.
Since the extrinsic distance from $a_n$ to the boundary $\partial D$ is greater than some positive number
independent of $n$, then $D$ contains geodesic arcs parameterized by arc length
\[
\g _{a_n}\colon [0,L(\g _{a_n}))\to D,\quad \g _{b_n}\colon
[0,L(\g _{b_b}))\to D,
\]
starting respectively at $a_n,b_n$, with finite lengths
$L(\g _{a_n})=I_{L'}(a_n)$, $L(\g _{b_n})=I_{L'}(b_n)$ that can be both taken
arbitrarily small for $n$ large, and with limiting end points the origin.
Fix $n$ large and let $m(n)$ be an integer such that $a_n,b_n\in D(m(n))$.
As $D(m(n))$ is a compact minimal disk, then the Gauss-Bonnet formula
ensures that both $\g _{a_n},\g _{b_n}$ can be assumed to exit $D(m(n))$,
which implies that these geodesic arcs
must intersect $\wt{\be }_{m(n)}$. As the length of $\wt{\be }_{m(n)}$ is not larger than
5, then there exists a piecewise smooth path $\Delta _n\subset D$
joining $a_n,b_n$ with length not larger than 6 for $n$ large
($\Delta _n$ is a union of arcs in $\g _{a_n},\wt{\be }_{m(n)},\g _{b_n}$). This is a contradiction, because
the intrinsic distance in $D$ from $a_n$ to $b_n$
was supposed to be arbitrarily large. Therefore, the diameter of $D$ is finite.

The finiteness of the diameter of $D$ insures that we can join the two multivalued graphs in $L'\cap C^+(\vec{0},
\de )$ by curves of uniformly bounded lengths, which contradicts Corollary~1.2 in~\cite{cm26} (specifically see
the paragraph just after Corollary 1.2 in~\cite{cm26}). This contradiction finishes the proof of Claim~\ref{claim3.11},
modulo proving property (P), which we demonstrate next.

First note that $L'$ is not complete (otherwise it would be proper by~\cite{cm35} hence it could not be contained in
a halfspace by the halfspace theorem). Therefore, the injectivity radius function $I_{L'}$ is finite valued and continuous
on $L'$. Arguing by contradiction, suppose that $I_{L'}(z_n)/|z_n|$ tends to infinity at a sequence of points
$z_n\in D$. By continuity of $I_{L'}/|\cdot |$, the
points $z_n$ must leave every compact subset of $D$. This property  and the properness of $L'$ in $\{ x_3>0\} $  imply
that after passing to a subsequence, the $z_n$ converge in $\R^3$ to a point $z_{\infty }\in P$, $|z_{\infty }|\leq 1$.

Suppose that $z_{\infty }\neq \vec{0}$. Therefore, $I_{L'}(z_n)\to \infty $ as $n\to \infty $. By
Corollary~0.8 in~\cite{cm35}, we can find disks in $L'$ centered at $z_n$ of intrinsic radius arbitrarily large,
such that the second fundamental form of such disks is arbitrarily small (for $n$ sufficiently large). This is impossible,
as such disks would intersect the spiraling curves in $[L'\cap \{ x_1^2+x_2^2=1\} ]-C^+(\vec{0},\tau )$.
Therefore, $z_{\infty }=\vec{0}$. Consider the sequence of rescalings $L''_n:=\frac{1}{|z_n|}L'$. Since
\[
\frac{I_{L''_n}\left(\frac{1}{|z_n|}x\right) }{ \frac{1}{|z_n|}|x|} =\frac{I_{L'}(x)}{|x|},
\]
then $I_{L''_n}\left(\frac{1}{|z_n|}z_n\right) $ tends to infinity as $n\to \infty $.
As in the previous case, there exist
disks in $L''_n$ centered at $\frac{1}{|z_n|}z_n$ of intrinsic radius arbitrarily large,
such that the second fundamental form of such disks is arbitrarily small (for $n$ sufficiently large). This is again
impossible, because such disks would intersect the multivalued graphs in $L''_n-C^+(\vec{0},\tau )$
for $n$ large.
Now Claim~\ref{claim3.11} is proved.
\end{proof}

We next apply Claim~\ref{claim3.11} to find a contradiction in Case (E2-B), that in turn
will imply that the genus of $L_1(\ve )$ is infinite (finishing the proof of Assertion~\ref{ass3.8}).
From Claim~\ref{claim3.11} we deduce that for any
$r>0$ sufficiently small, $L_1(\ve )\cap \B(r)$ has the
appearance of a spiraling double staircase limiting
from above to the closed horizontal disk $\B(r)\cap P$ minus its center.

Since $L_1(\ve )$ is proper in $\{ 0<x_3\leq \ve \} $,
we conclude from the last paragraph that given $k$
isolated points $q_1, q_2, ..., q_k$ in the singular set ${\cal S}'
\subset \{ x_3=0\} $ of the lamination $\overline{L_1(\ve)}$, there exist
pairwise disjoint disks $D(q_1, \de_1),\ldots ,D(q_k, \de_k) \subset \{ x_3=0\} $
such that each of the vertical cylinders $\partial D(q_j, \de_j) \times (0,\ve]$
intersects $L_1(\ve)$ in two spiraling curves that limit to the
horizontal  circle
$\partial D (q_j, \de_j) \times \{0\}$. As  $L_1(\ve )$ has finite genus,
then
the proof of Lemma~3.3 in~\cite{mpr14} implies that there exist at most two of these isolated
singular points of ${\cal S}'$, which in turn implies that
${\cal S}'$ contains at most two points (since ${\cal S}'$ is closed and
countable, the subset of its isolated points is dense in ${\cal S}'$
by Baire's Theorem), and that if ${\cal S}'$ consists of exactly two points,
then we find a contradiction with the flux-type arguments along connecting loops
as in the proof of Proposition~4.18 in~\cite{mpr14} (also see the proof of property (K1)
above). Hence, ${\cal S}'$ consists of a unique point, and in this case we find a
contradiction as in the last paragraph of the proof of property (K2).
 This contradiction proves that the genus of $L_1(\ve )$
 is infinite provided that Case (E2-B) holds,
thereby finishing the proof of Assertion~\ref{ass3.8}, which in turn demonstrates
Proposition~\ref{propos3.5}.
\end{proof}

In the remainder of this section, we will assume that the surfaces
$M_n$ that appear in the statement of Theorem~\ref{tttt}
have uniformly bounded genus
and ${\cal S} \cup S(\lc) \neq \mbox{\O}$; our goal will be to prove
the following statement, which will finish the proof of Theorem~\ref{tttt}.
\begin{proposition}
\label{propos3.10}
Item~7 of Theorem~\ref{tttt} holds.
\end{proposition}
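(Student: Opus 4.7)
For item~7.1, I argue by contradiction: suppose $\overline{\lc}$ contains a nonplanar leaf $L=L_1\cup {\cal S}_{L_1}$. Since the convergence of portions of $M_n$ to $L_1$ is of multiplicity one by Proposition~\ref{propos3.5} (item~6 of Theorem~\ref{tttt}), the genus of $L_1$ is bounded above by the uniform genus bound of the $M_n$. This immediately rules out case~6.2, in which Assertion~\ref{ass3.8} forces $L_1$ to have infinite genus. Hence case~6.1 must apply: $L$ is the unique leaf of $\overline{\lc}$, is proper in $\R^3$, ${\cal P}=\mbox{\rm \O}$, ${\cal S}^A\cup S(\lc)=\mbox{\rm \O}$, and ${\cal S}={\cal S}_{L_1}\subset W$. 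Since $L$ is proper with finite genus and ${\cal S}_{L_1}$ is a closed countable subset of $\R^3$, a transfinite iteration of Corollary~\ref{corol2.3} at isolated points (which exist at each stage of the Cantor--Bendixson process by Baire's theorem, and which the process exhausts because ${\cal S}_{L_1}$ is countable) shows ${\cal S}_{L_1}=\mbox{\rm \O}$. Combined with ${\cal S}^A\cup S(\lc)=\mbox{\rm \O}$, this yields ${\cal S}\cup S(\lc)=\mbox{\rm \O}$, contradicting the hypothesis of item~7. Therefore $\overline{\lc}={\cal P}$ and consequently ${\cal S}=\mbox{\rm \O}$.

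For item~7.2, from~7.1 and the hypothesis we must have $S(\lc)\neq \mbox{\rm \O}$. Through every $q\in S(\lc)$ there passes a plane $P_q\in {\cal P}$ by item~4, and the local description~(D1) combined with the $C^{1,1}$-regularity theorem of Meeks~\cite{me25,me30} shows that $\overline{\lc}$ is locally a foliation by parallel planar disks with $S(\lc)$ a $C^{1,1}$-arc orthogonal to them. The maximal open connected region of $\R^3$ foliated in this way by planes of ${\cal P}$ yields the slab foliation ${\cal F}$, and $S(\lc)\cap {\cal F}$ consists of line segments orthogonal to the planes of ${\cal F}$. The key technical step is bounding the number of such segments, for which I would apply the connecting-loop argument of Lemma~3.3 in~\cite{mpr14}, using the bounded genus to limit this number to at most two. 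In the two-segment case, the flux argument on connecting loops in Proposition~4.18 of~\cite{mpr14} (the same mechanism used in the proof of property~(K1) in Assertion~\ref{ass3.8}) shows that the corresponding pairs of multivalued graphs inside the $M_n$ must be oppositely handed, giving the claimed limit minimal parking garage structure.

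For item~7.3, assume the $M_n$ are compact with boundary. Since $\{M_n\}_n$ has locally positive injectivity radius in $A$ and each $M_n$ is compact, the boundaries $\partial M_n$ must eventually exit every compact subset of $A$, so any fixed bounded region of $A$ is boundary-free for $M_n$ with $n$ sufficiently large. The main obstacle is upgrading the slab foliation ${\cal F}$ from~7.2 to a foliation of all of $\R^3$. I would argue that any nonempty component $H$ of $\R^3-\overline{{\cal F}}$ leads to a contradiction: if $\overline{\lc}\cap H=\mbox{\rm \O}$, then the halfspace-theorem argument with catenoid barriers from Lemma~\ref{lemma3.4}, applied inside $H$ together with the fact that the compact $M_n$ with diverging boundaries must have pieces penetrating into $H$ from its bounding plane, yields a contradiction; if $\overline{\lc}\cap H\neq \mbox{\rm \O}$, then items~7.1 and~7.2 applied to the portion of the sequence lying in $H$ produce new planar leaves in $H$, extending ${\cal F}$ beyond its bounding plane and contradicting the maximality of ${\cal F}$. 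Hence ${\cal F}$ foliates all of $\R^3$. Finally, since the local picture~(D1) holds at every point of $S(\lc)$ and no plane of the foliation can terminate a segment of $S(\lc)$, the segments of $S(\lc)\cap {\cal F}$ from~7.2 extend to complete lines orthogonal to the foliation, yielding the stated structure of $\overline{S(\lc)}$.
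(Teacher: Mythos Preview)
Your arguments for items~7.1 and~7.2 are essentially correct and close to the paper's, though organized a bit differently. For~7.1 the paper first proves (Assertion~\ref{asser3.11}) that a plane passes through every point of ${\cal S}\cup S(\lc)$, which forces ${\cal P}\neq\mbox{\rm \O}$ and hence case~6.2, whereas you run the 6.1/6.2 dichotomy directly and kill case~6.1 via Corollary~\ref{corol2.3} and Baire; both work. One small slip in~7.2: the opposite-handedness of the two columns comes from the \emph{genus} argument (same-handed pairs would force unbounded genus in $M_n$, as in the proof of Lemma~3.3 of~\cite{mpr14}), not from the flux argument of Proposition~4.18 in~\cite{mpr14}; the flux argument is used in (K1) only \emph{after} opposite-handedness is known, to derive a contradiction in that different context.

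Your proof of~7.3 has a genuine gap. In your case~1 ($\overline{\lc}\cap H=\mbox{\rm \O}$), the claim that ``the compact $M_n$ with diverging boundaries must have pieces penetrating into $H$'' is not justified, and even granting it, the halfspace-theorem/catenoid-barrier argument from Lemma~\ref{lemma3.4} does not apply: that lemma concerns leaves of the limit lamination, not the approximating surfaces $M_n\cap H$, which have boundary on the bounding plane of $H$. Nothing prevents these pieces from collapsing onto that boundary plane in the limit, which is perfectly consistent with $\overline{\lc}\cap H=\mbox{\rm \O}$. In your case~2, knowing that $\overline{\lc}={\cal P}$ only tells you there are \emph{some} planes in $H$; it does not by itself extend the \emph{foliated} slab ${\cal F}$ across its boundary, since isolated planes in $H$ would not contradict the maximality of ${\cal F}$.

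The paper's mechanism is quite different and more concrete (Assertion~\ref{ass3.15}): one places vertical solid cylinders $C(i,\ve,k)$ around the column endpoints $p_i$ and exploits the compactness of the $M_n$ to show that $M_n\cap \de C(i,\ve,k)$ contains a pair of long spiraling arcs running the full height of the cylinder, from $x_3=-k$ to $x_3=k$. Since the $M_n$ converge $C^1$ to horizontal planes along $\de C(i,\ve,k)$ (away from $W\cup\overline{S(\lc)}$), this forces every height $|t|<k$ to lie in ${\cal P}$; letting $k\to\infty$ gives the full foliation. A second step, using transverse graphical disks $E(n,t)$ and the Gauss-map behavior along the two spirals, shows that every height also meets $W\cup\overline{S(\lc)}$ inside each cylinder; countability of $W$ then forces $\overline{S(\lc)}$ to contain the full vertical lines through $p_1,p_2$. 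This is the argument you are missing.
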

\begin{proof}
Suppose that the surfaces $M_n$ have uniformly
bounded genus and ${\cal S} \cup S({\lc}) \neq \mbox{\rm \O}$.
As before, we will organize the proof in assertions.
\begin{assertion}
\label{asser3.11}
Through every point $p\in {\cal S} \cup S(\lc)$,
there passes a plane of ${\cal P}$ (in particular, ${\cal P}\neq
\mbox{\rm \O }$).
\end{assertion}
{\it Proof. }
Fix a point $p\in
{\cal S} \cup S(\lc)\stackrel{(\ref{eq:S})}{=}
{\cal S}^A\cup (W\cap \overline{\cal
L})^{\mbox{\footnotesize sing}}\cup S({\cal
L})$. By item~4 of Theorem~\ref{tttt}, the assertion holds if
$p\in {\cal S}^A \cup S({\cal L})$; hence in the sequel
we will assume that $p\in {\cal S}\cap W$.
We will discuss two possibilities for $p$, depending on whether or not $p$ is isolated
in ${\cal S}\cap W$.
\begin{enumerate}[(L1)]
\item Assume $p$ is an isolated point of ${\cal S}\cap W$.
Arguing by contradiction, suppose no plane of ${\cal P}$
passes through $p$. By item~5 of Theorem~\ref{tttt},
neither of the conditions 5.1, 5.2 hold.
Since 5.1 does not occur and $p$ is
isolated in the closed set ${\cal S}\cap W$, we can find $\ve >0$
such that such that
$\lc \cap \overline{\B}(p, \ve)$ consists of a
finite number of noncompact, connected,
smooth, properly embedded minimal surfaces $\{\Sigma_1, \ldots ,\Sigma_m\}$
in $\overline{\B}(p, \ve) - \{p \}$ (otherwise there would be a
limit leaf of $\lc \cap (\B (p, \ve) - \{p \})$, contradicting~5.1), together with
finitely many compact, connected, smooth minimal surfaces with boundary on $\partial
\overline{\B} (p, \ve)$. By Corollary~\ref{corol2.3} we have
 that $m=1$ and the surface $\Sigma _1$ has just one
end. Since the surfaces $M_n$ have uniformly bounded genus and converge
with multiplicity one to $\Sigma_1$ (because 5.2 does not occur at $p$), then $\Sigma_1$
has finite genus  not greater than the uniform bound on the genus of
the $M_n$. By the last statement in
Corollary~\ref{corol2.3}, then $\Sigma _1$ extends smoothly across $p$,
contradicting that $p \in {\cal S}$.

\item Assume that $p\in {\cal S}\cap W$ is not an
isolated point of ${\cal S}\cap W$. Since ${\cal S} \cap W$ is a countable closed set
of $\R^3$, then $p$ must be a limit of isolated points $p_k\in {\cal
S}\cap W$. By (L1), there pass planes in ${\cal P}$
through the points $p_k$, $k\in \N$. Our assertion holds in this
case by taking limits of these planes. This finishes the proof of
Assertion~\ref{asser3.11}.
{\hfill\penalty10000\raisebox{-.09em}{$\Box$}\par\medskip}
\end{enumerate}

\begin{assertion}
\label{asser3.12}
 $\overline{\cal L}={\cal P}$ (hence item~7.1 of Theorem~\ref{tttt}
holds).
\end{assertion}
\begin{proof}
Arguing by contradiction, choose a leaf $L$ of
$\overline{\lc}$ in $\overline{\cal L}-{\cal P}$.
By definition of a leaf of a singular lamination, we can
decompose $L=L_1\cup {\cal S}_{L_1}$ where
$L_1$ is a leaf of the related regular lamination ${\cal
L}_1=\overline{\cal L}- {\cal S}$ of $\R^3-{\cal S}$ defined in (\ref{eq:reglam}),
and ${\cal S}_{L_1}$ is the set of
singular leaf points of $L_1$.
Since ${\cal S} \cup S(\lc) \neq \mbox{\O}$, then Assertion~\ref{asser3.11} implies that
${\cal P} \neq \mbox{\O}$. By item~6 of Theorem~\ref{tttt},
$L$ does not intersect ${\cal S}^A\cup S({\cal L})$, there exists a
subcollection ${\cal P}(L)$ of one of two planes in ${\cal P}$ such
that $\overline{L}=L\cup {\cal P}(L)$, $L$ is proper in the open
slab or halfspace component $C(L)$ of $\R^3 - {\cal P}(L)$ that
contains $L$, $C(L)\cap \overline{\cal L}=L$, and $L_1$ has infinite genus.
But since the convergence of portions of
the $M_n$ to $L_1$ has multiplicity one (otherwise $L_1$ and $L$
would be flat), then we conclude $L_1$ has finite genus.
This contradiction finishes the proof of the assertion.
\end{proof}

We now prove item~7.2 of Theorem~\ref{tttt}.
By Assertion~\ref{asser3.12} we have $\overline{\cal L}={\cal P}$, which
implies that ${\cal S} = \mbox{\O}$. Since by hypothesis ${\cal S}\cup
S({\cal L})\neq \mbox{\rm \O}$, it follows that $S({\cal L})\neq
\mbox{\rm \O}$.

\begin{assertion}
\label{asser3.13}
Let $P\in{\cal P}$ be a plane such that $P\cap
S(\lc)\neq\mbox{\rm \O}$ (note that $P$ exists by item~4 of Theorem~\ref{tttt}).
Then,  $P \cap S(\lc)$ contains at
most two points, and if $P \cap S(\lc)=\{ p_1,p_2\} $,
then the two multivalued graphs occurring inside the surfaces $M_n$ near $p_1,
p_2$ are oppositely handed.
\end{assertion}
\begin{proof}
Description (D1) implies that $P \cap S(\lc)$ is discrete in
$P-W$. Reasoning by contradiction, suppose that
$P \cap S(\lc)$ contains three isolated points $p_1,p_2,p_3$.
Let $\Gamma\subset P-W$ be a smooth, embedded compact arc joining $p_1$
to $p_2$  and disjoint from $S(\lc)-\{ p_1,p_2\} $. Note that the
corresponding  two multivalued graphs forming in the surfaces $M_n$
around the points $p_1,p_2$ are oppositely handed (otherwise, for
$n$ large in a fixed size small neighborhood of $\Gamma$ in $\rth
-W$, the surfaces $M_n$ would have unbounded genus, see for example
the proof of Lemma~3.3 in~\cite{mpr14} and also see
the proof of property
(K1) above). Using an analogous local
picture of the $M_n$ near $p_3$, one sees that the handedness of the
multivalued graph
in $M_n$ near $p_3$ must be opposite to the handedness of
the multivalued graph
in $M_n$ near $p_1$ and near $p_2$, which is
impossible since they have opposite handedness. Hence the assertion follows.
\end{proof}

\begin{assertion}
\label{ass3.14}
Item~7.2 of Theorem~\ref{tttt} holds.
\end{assertion}
\begin{proof}
Consider a point $p\in S({\cal L})$. By the local description (D1),
it follows that locally around $p$, the set
$S(\lc)$ is a $C^{1,1}$ arc $\G _p$ that is orthogonal to a local foliation
of disks contained in planes of ${\cal P}$. Thus $\G _p$ is an open
straight line segment orthogonal to the planes in ${\cal P}$. Consider the
collection ${\cal P}(\G _p)$ of planes in ${\cal P}$ that intersect
$\G _p$. If ${\cal P}(\G _p)\cap S({\cal L})=\G _p$, then item~7.2 of
Theorem~\ref{tttt} follows. Otherwise, there is a point $q\in
\left[ {\cal P}(\G _p)\cap S({\cal L})\right] -\G _p$, and the previous arguments
show that there exists a related open line segment $\G _q\subset S({\cal L})$
passing through $q$. $\G_q$ is clearly parallel to (and disjoint from) $\G _p$.
Note that we do not require any maximality on $\G _p$ or $\G _q$ as line
segments contained in $S({\cal L})$ passing respectively through $p$ or $q$.
Choose a plane $P\in {\cal P}$ so that it intersects both $\G _p$ and $\G _q$
and $P$ is disjoint from $W$, which is possible since $W$ is
countable. By  Assertion~\ref{asser3.13}, ${ P} \cap S(\lc)
$ contains exactly two points, one in each segment $\G _p,\G _q$.
Thus, there is a related limiting minimal parking garage
structure ${\cal F}$ in some $\ve$-neighborhood of $P$ (see the
first paragraph just after the statement of Theorem~\ref{tttt} for
an explanation of this limiting parking garage structure). Also,
Assertion~\ref{asser3.13} gives that the two multivalued graphs forming in
$M_n$ near these two points are oppositely
handed. Assertion~\ref{ass3.14} now follows.
\end{proof}

\begin{assertion}
\label{ass3.15}
If the surfaces $M_n$ are compact with boundary, then
item~7.3 of Theorem~\ref{tttt} holds (this completes the proof of
Proposition~\ref{propos3.10}
and of Theorem~\ref{tttt}).
\end{assertion}
{\it Proof. }
Recall that by Assertion~\ref{asser3.12}, $\overline{\cal L}={\cal P}$
(and thus, ${\cal L}=\overline{\cal L}$).
After possibly a rotation, assume that the planes in ${\cal P}$ are
horizontal. The proof of Assertion~\ref{ass3.14} insures that $S({\cal L})$ consists of a
nonempty set of open vertical segments (possibly halflines or
lines). By Assertion~\ref{asser3.13}, every horizontal plane in ${\cal P}$ intersects
$S({\cal L})$ in at most two points. We now
consider two cases, depending upon whether or not some plane $P\in
{\cal P}$ intersects $S({\cal L})$ in exactly two points.

\begin{enumerate}[(M1)]
\item Suppose that some plane $P\in {\cal P}$ intersects $S({\cal L})$ in
exactly two points.\newline
By the proof of
item~7.2 of Theorem~\ref{tttt} and after replacing $P$
by another plane, we may assume
that there exists an open slab $Y$ containing $P$, which is foliated by
planes in ${\cal P}$, and such that $Y\cap S({\cal L})=Y\cap \overline{S({\cal L})}$
consists of two
connected, vertical line segments with boundary end points in the boundary
$\partial Y$ of $Y$. We next exchange $Y$ by the largest such open slab; more precisely,
let $\Delta (Y)$ be the collection of all open slabs $Y'$ in $\R^3$ such that
$Y\subset Y'$, $Y'$ intersects $\overline{S({\cal L})}$ in two
open vertical line segments, and the ends points of each line segment
in $Y'\cap \overline{S({\cal L})}$ are contained in $\partial Y'$.
Define
\begin{equation}
\label{eq:X}
X=\bigcup _{Y'\in \Delta (Y)}Y'.
\end{equation}
Note that $X$ is either an open slab in $\Delta (Y)$,
an open horizontal halfspace or $\R^3$. Furthermore, $X$ intersects
$\overline{S({\cal L})}$ in exactly two connected components, which are
either vertical segments, rays or lines with boundary end points in $\partial X$,
depending on whether or not $X$ is a slab, a halfspace or $\R^3$.
Our goal to prove Assertion~\ref{ass3.15} in this case (M1) is to show that $X=\R^3$.

Suppose $X\neq \R^3$ and we will find a contradiction.
Since $X\neq \R^3$, we may assume without
loss of generality the following properties:
\begin{description}
\item[{\rm (M1.1)}] $\partial X$ contains $P_0=\{ x_3=0\} $
as one of its boundary planes and $X$ lies below $P_0$.
\item[{\rm (M1.2)}] $X$ intersects $\overline{S({\cal L})}$ in exactly two connected
vertical line segments or rays that have end points $p_1,p_2\in
P_0$. One of these two points, say $p_1$, does not lie in the
interior of a line segment in $\overline{S({\cal L})}$ (this fact follows from
the maximality of $X$). Note that this implies that either $p_1$ is
isolated as an end point of a maximal segment in $\overline{S({\cal L})}$ (see
Figure~\ref{figslabs}-Up), or there exists a sequence of maximal
segments in $\overline{S({\cal L})}$ with end points converging to $p_1$
(Figure~\ref{figslabs}-Down).
\end{description}
\begin{figure}
\begin{center}
\includegraphics[width=10.4cm]{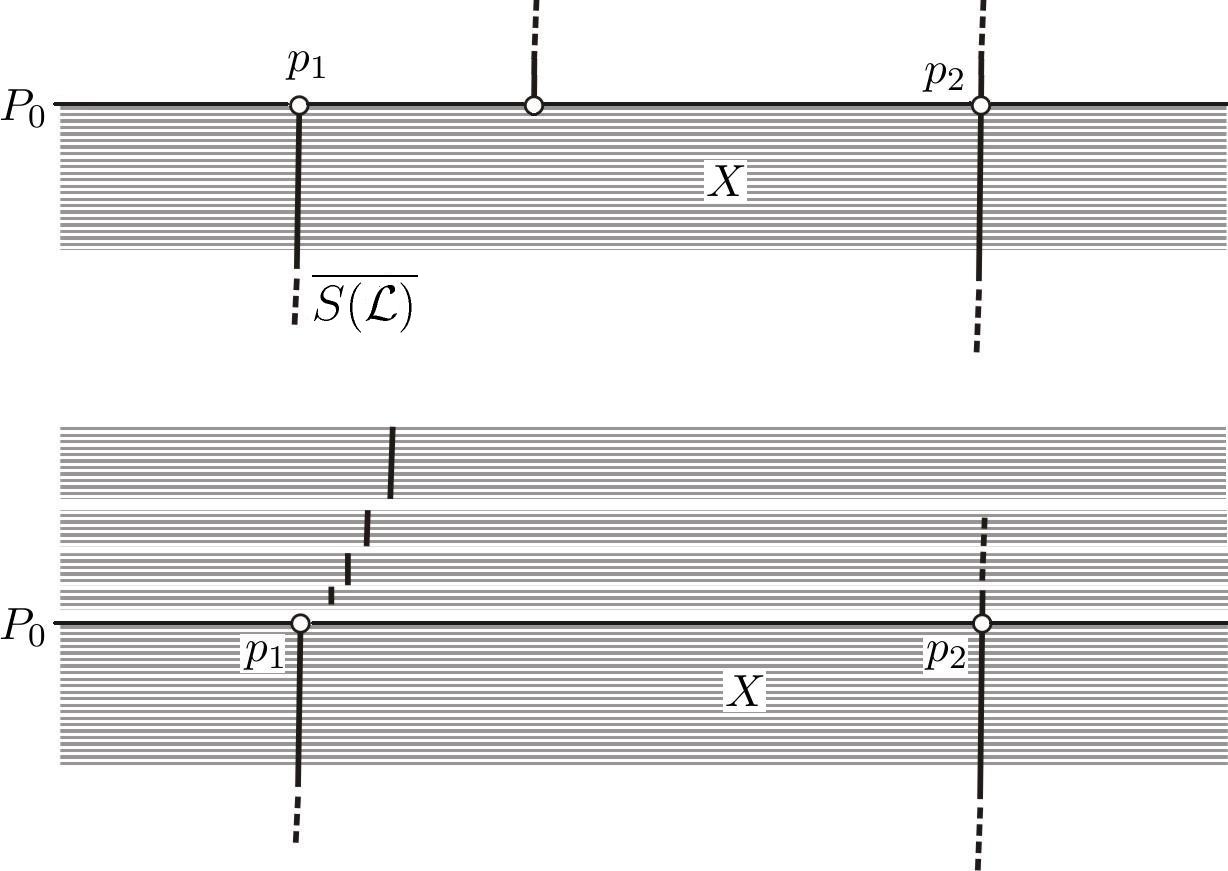}
\caption{Up: $p_1$ is isolated as an end point of maximal segments in
$\overline{S({\cal L})}$. Down: A sequence of maximal segments in
$\overline{S({\cal L})}$ with end points converging to $p_1$.}
 \label{figslabs}
\end{center}
\end{figure}

For $i=1,2$ and $\ve>0$, let
$\overline{D}(p_i,\ve )=\overline{\B}(p_i,\ve )\cap P_0$ be the closed disk in $P_0$
centered at $p_i$ of radius $\be$.
For $k\in \N$, let $C(i,\ve,k)=
\overline{D}(p_i,\ve )\times [-k ,k]$ denote the related compact solid cylinder
in $\R^3$, $i=1,2$. We also denote by $\de C(i,\ve,k)=\partial \overline{D}(p_i,\ve )
\times [-k,k] $ the side of this cylinder.
For a generic fixed small value of $\ve >0$, we have
\begin{enumerate}[(N.1)]
\item
$\overline{D}(p_1,\ve )\cap \ov{D}(p_2,\ve )=\mbox{\O }$, and
\item
$\partial \overline{D}(p_1,\ve )\cup \partial \overline{D}(p_2,\ve )$ is
disjoint for all $n\in \N$ from the compact countable set
\[
A(\ve,k):=
\Pi\left( [W\cup
\overline{S({\cal L})})]\cap \cup_{i=1}^2 C(i,\ve,k)\right) \subset P_0,
\]
where $\Pi$ is the orthogonal projection to $P_0$; hence,
for such a value of $\ve$, $\partial \overline{D}(p_1,\ve )\cup \partial \overline{D}(p_2,\ve )$  is at
least a positive distance from the compact set $A(\ve ,k)$.
\end{enumerate}

Since for each $k\in \N$, $\de C(1,\ve,k)\cup \de C(2,\ve,k)$ is also at a
positive distance
from the closed set $W\cup\overline{S({\cal L})}$, then the compact surfaces $M_n$ converge
$C^1$ as $n\to \infty $ to a subset of the collection of
horizontal planes ${\cal P}$ near the compact set
$\de C(1,\ve,k)\cup \de C(2,\ve,k)$. Also
recall that the $M_n$ converge below $P_0$ to a limiting minimal
parking garage structure and so, for
$i=1,2$ fixed and for
$n$ large enough depending on $k$,
$M_n\cap \de C(i,\ve,k)$  contains a pair of
pairwise disjoint,
long, almost-horizontal,
embedded spiral arcs
$\a_1^k(i,n)$, $\a_2^k(i,n)$, each of which joins
a point in the bottom boundary circle of
$\de C(i,\ve,k)$ to another point in its top boundary circle,
and both $\a _1^k(i,n)$, $\a _2^k(i,n)$ rotate together around $\de C(i,\ve,k)$.
Furthermore, the Gauss map of $M_n$ along $\a _1^k(i,n)$
is arbitrarily close to the north pole of the sphere (for $n$
large) and to the south pole along $\a _2^k(i,n)$.  Since
these spiral arcs intersect every horizontal plane of height between $-k
$ and $k$, then we deduce that the slab $\{ -k \leq x_3\leq k \} $
is foliated by planes in ${\cal P}$. By letting $k$ vary, we deduce that ${\cal P}$  is a foliation
of $\rth$ by horizontal planes.

Fix an integer $k\in \N$. We claim that every horizontal plane $P_t=P_0+t\, (0,0,1)$, $|t|<k$,
intersects $W\cup \overline{S({\cal L})}$ in at least one point of
$C(1,\ve,k)$ and in at least one point of $C(2,\ve,k)$.
If this intersection property does not hold for some $P_t$ with
say $C(1,\ve,k)$, then since $W\cup \overline{S({\cal L})}$ is closed,
the compact disk $D_t=P_t\cap C(1,\ve,k)$
is at a positive distance
from $W\cup \overline{S({\cal L})}$.  Observe that there
exists a sequence of minimal disks $E(n,t)\subset C(1,\ve,k)$ with
$\partial E(n,t)\subset \de C(1,\ve,k)$, that
are vertical graphs over $D_t$,
 converge $C^2$ to $D_t$
as $n\to \infty$ and such that  for $n$ large, $\partial E(n,t)$ intersects each of the
arcs $\a_1^k (i,n), \a_2^k(i,n)$ transversely in exactly  one point (see the perturbation foliation argument in
page 737 of~\cite{mr8} for the construction of the boundary curves $\partial E(n,t)$, so that $E(n,t)$ is found by solving
the Plateau problem for these curves).
After possibly replacing the graphs $E(n,t)$ by sufficiently
small vertical translations  $E(n,t)+(0,0,t_n)$ with $t_n\to 0$, we may assume that $E(n,t)$
intersects $M_n$ transversely in a compact 1-manifold whose
boundary is contained in
$\partial M_n\cup [M_n \cap \partial E(n,t)]$.
Since points in the boundary curves of the surfaces $M_n$
diverge in $\rth$ or converge to points in $W$
 as $n\to \infty$ and since for $n$ sufficiently large,
 $W\cup \overline{S({\cal L})}$ is a positive distance from
$E(n,t)$, then also, for $n$ sufficiently large,
$M_n\cap E(n,t)$ contains exactly  one component with boundary and this component
is a compact arc $c_n$ that joins points of the two long spirals
$\a_1^k (i,n), \a_2^k(i,n)$.
The property of the Gauss map of $M_n$ along
$\a_1^k (i,n)\cup \a_2^k(i,n)$
explained in the previous paragraph implies that the normal vector to $M_n$
at some point $q_n\in c_n$ is horizontal. After extracting a subsequence,
the points $q_n\in E(n,t)$ converge as $n\to \infty $ to a point
 in $[W\cup \overline{S({\cal L})}]\cap P_t\cap C(1,\ve,k)$, which is a contradiction. Hence, our claim holds.

Since $W$ is countable, our claim proved in the previous paragraph implies that
every plane $P_t$,  with $|t|<k$, intersects $\overline{S({\cal L})}$ in
at least one point of $C(1,\ve,k)$
and in at least one point of $C(2,\ve,k)$.
Since, for any $k\in \N,$ the generic\footnote{In
the sense of properties (N.1), (N.2) above.}
radii values $\ve$ of $C(1,\ve,k),C(2,\ve,k) $ can be chosen arbitrarily small,
it follows that $\overline{\cS(\cL)}$ contains every point in the two infinite vertical lines
passing through the points $p_1,p_2$. By Assertion~\ref{asser3.13},
 $\overline{\cS(\cL)}$  is the union of the two infinite vertical lines
passing through the points $p_1,p_2$, which proves item~7.3 of Theorem~\ref{tttt} holds in case (M1) holds
(with two vertical lines for $S({\cal L})$).

\item Suppose that every plane in ${\cal P}$ intersects $S({\cal L})$
in at most one point.
\newline
By item~7.2 of Theorem~\ref{tttt}, either $\overline{S({\cal
L})}$ consists of a single vertical line (and we are done),
or there exists a maximal segment in $\overline{S({\cal L})}$ with some
end point $p$.  The arguments in the case of (M1)
can be easily modified to prove that
for every $k\in \N$ and every small generic\footnotemark[7]
radius, the compact cylinder $C(\ve ,k)=\overline{D}(p,\ve )\times [-k,k]$
intersects $W\cup \overline{S({\cal L})}$ at any height $t$ with $|t|<k$, and thus,
the infinite vertical line $l_p$ passing through $p$ is contained in $\overline{S({\cal L})}$.
Since the cases (M1) and (M2) are mutually exclusive, then
$\overline{S({\cal L})}=l_p$ and ${\cal P}$ is the foliation of $\rth$ by horizontal planes.

This finishes the proofs of Assertion~\ref{ass3.15},
of Proposition~\ref{propos3.10} and of Theorem~\ref{tttt}.
\end{enumerate}
\par
\vspace{-.8cm}
\end{proof}

\section{The structure theorem for singular minimal laminations of $\R^3$ with countable singular set.}
\label{secproofthm1.3}

We next state the
following general structure theorem for possibly singular minimal laminations
of $\R^3$ whose singular set is countable. Theorem~\ref{global} below
is useful in applications because of the following situation. Suppose
that $L$ is a nonplanar leaf of a minimal lamination $\lc$ of $\R^3
-{\cal S}$, with ${\cal S}\subset \R^3$ being closed. In this case,
its closure $\overline{L}$ has the structure of a possibly singular
minimal lamination of $\R^3$, which under certain additional hypotheses,
can be shown to have a countable singular set. If $L$ has  finite genus,
then item~6 of the next
theorem demonstrates that $\overline{L}$
is a smooth, properly embedded minimal surface in $\R^3$, $ L$ is the unique
leaf of $\cL $
and $\cS=\mbox{\O }$.

\begin{theorem} [Structure Theorem for Singular Minimal Laminations of $\R^3$]
\label{global}
\mbox{}\\
Suppose that $\overline{\lc}={\cal L}\cup
{\cal S}$ is a possibly singular minimal lamination of $\R^3$ with
a countable set ${\cal S}$ of singularities. Then:

\begin{description}
\item[{\it 1.}]
\label{12.3.1}
The set ${\cal P}$ of leaves in $\overline{\lc}$
that are planes forms a closed subset of $\R^3$.
\item[{\it 2.}]
\label{12.3.2} The set $\mbox{\rm Lim}(\overline{\cal L})$ of limit
leaves of $\overline{\lc}$ forms a closed set in $\R^3$ and
satisfies $\mbox{\rm Lim}(\overline{\cal L})\subset {\cal P}$.
\item[{\it 3.}]
\label{12.3.3} If $P$ is a plane in ${\cal P} - \mbox{\rm
Lim}(\overline{\cal L})$, then there exists  $\de >0$ such that
$P({\delta}) \cap \overline{\lc}=P$, where $P({\delta})$ is the
$\delta$-neighborhood  of $P$. In particular, ${\cal S}\cap [{\cal
P}- \mbox{\rm Lim}(\overline{\cal L})]=\mbox{\rm \O }$.
\item[{\it 4.}]
\label{12.3.4} Suppose  $p \in {\cal S}-\bigcup _{P\in {\cal P}}P$.
Then for almost all $\ve>0$ sufficiently small, $\lc(p, \ve) = \lc \cap
\overline{\B} (p, \ve) $ has the following description.
\begin{enumerate}[4.1.]
\item
 $\lc(p, \ve) $
 consists of a finite number of leaves, each of which
 is a properly embedded smooth surface in
$\overline{\B}(p, \ve) -{\cal S}$
with compact boundary in $\esf^2(p,\ve )$.

\item
 All of the leaves of $\lc(p, \ve)$ lie on the
same leaf of $\overline{\lc}.$
\item
 Each point $q\in {\B}(p, \ve) \cap {\cal S}$ represents the
end of a unique leaf $L_q$ of $\lc(p, \ve)$,
in the sense that there is a proper arc $\a\colon [0,1)\to L_q$
with $q=\lim_{t\to 1} \a(t)$. Furthermore, this end of $L_q$ has
infinite genus ($L_q=L_{q'}$ may occur if $q,q'$ are distinct points
in ${\B}(p, \ve) \cap {\cal S}$, for example this occurs if ${\B}(p,
\ve) \cap {\cal S}$ is infinite for all small $\ve>0$). In fact, if $p$ is an
isolated point of ${\cal S}$, then $\ve$ can be chosen small enough
so that $\lc (p, \ve)$ is contained in the leaf of $\cL$ that contains $L_p$, and
$L_p$ has infinite genus and exactly one end.
\end{enumerate}
\end{description}
\noindent In items~5, 6 below, suppose that $L=
\overline{\cal L}(L_1)=L_1\cup {\cal S}_{L_1}$ is a leaf of
$\overline{\cal L}$ that is not contained in ${\cal P}$, where $L_1$ is
the related leaf of ${\cal L}$,
and ${\cal S}_{L_1}$ is the set of singular leaf points of $L_1$.
\begin{description}
\item[{\it 5}.] One of the following possibilities holds.
\begin{enumerate}[5.1.]
\item
$L$ is proper in $\R^3$, and  $ L$ is the unique
leaf of $\cL $.
\begin{figure}
\begin{center}
\includegraphics[width=15.1cm]{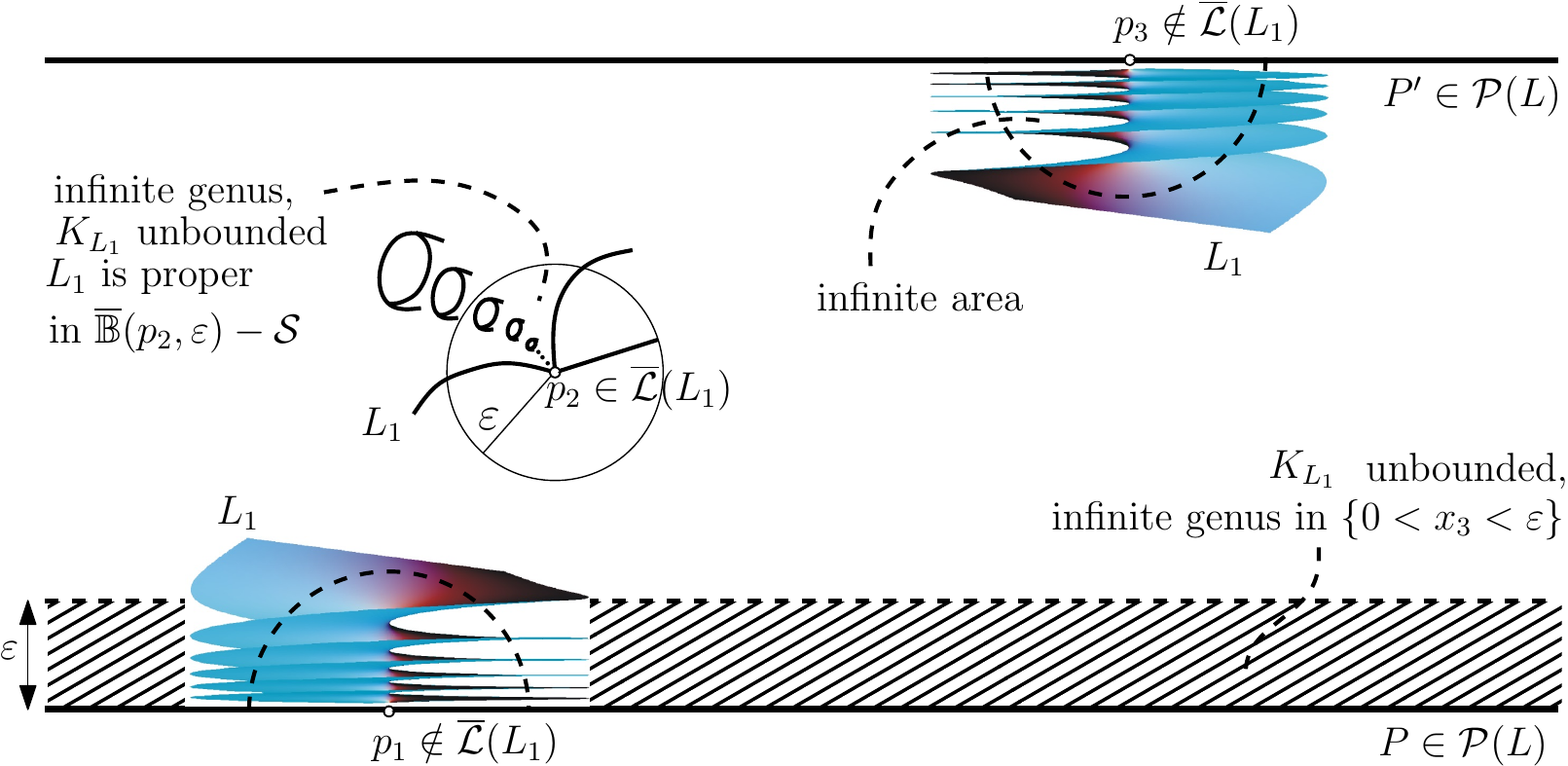}
\caption{Behavior of any
nonproper leaf $L=L_1\cup {\cal S}_{L_1}$
of the singular lamination $\overline{\cal L}$ in item~5.2 of Theorem~\ref{global}:
$L_1$ has infinite genus
in any neighborhood
of a singularity of type $p_2$ and in any slab neighborhood of a plane containing a
singularity of type $p_1$,
and no other leaves of $\overline{\cal L}$ can occur between
$P,P'$. }
\label{fig2}
\end{center}
\end{figure}
\item
 $L$ is not proper in $\R^3$
and ${\cal P}\neq \mbox{\rm \O}$. In this case, the closure
$\overline{L}$ of $L$ in $\R^3$ has the structure of a possibly
singular minimal lamination of $\R^3$ (with singular set contained
in $\overline{L}\cap {\cal S}$) and there exists a subcollection
${\cal P}(L)\subset {\cal P}$ consisting of one or two planes, such
that $\overline{L}=L \cup {\cal P}(L)$ and  $L$
is proper in a component $C(L)$ of $\R^3-{\cal P}(L)$ and
$C(L)\cap \overline{\cal L}=L$. 
Furthermore (see
Figure~\ref{fig2}):\ben[a.]
\item Every open $\ve$-neighborhood $P(\ve)$ of a plane $P\in {\cal
P}(L)$ intersects $L_1$ in a connected surface with unbounded Gaussian curvature.
\item   If some open $\ve$-neighborhood $P(\ve)$ of a plane $P\in {\cal
P}(L)$ intersects $L_1$ in a surface with finite genus, then  $P(\ve)$  is
disjoint from the singular set of $\ov{L}$. 
\item  $L_1$ has infinite genus.
\een
\end{enumerate}

In particular, $\overline{\lc}$ is the disjoint union of its leaves,
regardless of whether case 5.1 or 5.2 occurs.
\item[{\it 6}.]
\label{12.3.6}
 If $L_1$ has finite genus, then $L=L_1$ is a smooth,
   properly embedded minimal surface in $\R^3$ (thus ${\cal
L}=\overline{\cal L}$, $ L  $ is the unique leaf of $\cL$ and ${\cal S}=\mbox{\rm \O}$).
\end{description}
\end{theorem}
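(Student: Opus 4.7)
The plan is to reduce Theorem~\ref{global} to Theorem~\ref{tttt} by introducing the sequence $\{M_n\}_n$ of embedded minimal surfaces with boundary in $A=\R^3-{\cal S}$ obtained as follows: pick a dense countable collection $\{q_k\}_k\subset {\cal L}$ and let $M_n$ be a finite disjoint union of smoothly bounded compact pieces of the leaves of ${\cal L}$ through $q_1,\ldots,q_n$, chosen so that these pieces exhaust every such leaf as $n\to \infty$. Since ${\cal L}$ is a regular minimal lamination of $A$, this sequence will have locally positive injectivity radius in $A$ and will converge smoothly on compact subsets of $A$ to ${\cal L}$, with ${\cal S}^A=\mbox{\O}$ and $S({\cal L})=\mbox{\O}$ in the notation of Theorem~\ref{tttt}, and the resulting limit singular lamination will coincide with the original $\overline{\lc}$. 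Applying Theorem~\ref{tttt} to this setup will inherit items~1, 2, 3 and~5 of Theorem~\ref{global} directly from items~1, 2, 3 and~6 of Theorem~\ref{tttt}.

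For item~4, I will fix $p\in {\cal S}-\bigcup _{P\in {\cal P}}P$ and use the closedness of ${\cal P}$ together with $\mbox{Lim}(\overline{\cal L})\subset {\cal P}$ to find some $\ve_0>0$ so that no planar or limit leaf of $\overline{\cal L}$ meets $\overline{\B}(p,\ve_0)$; for almost every $\ve\in (0,\ve_0)$ the sphere $\esf^2(p,\ve)$ is transverse to ${\cal L}$. For such a value of $\ve$, the intersection ${\cal L}\cap \overline{\B}(p,\ve)$ will decompose into finitely many compact leaves (finiteness following from the absence of nearby limit leaves) together with finitely many noncompact properly embedded components in $\overline{\B}(p,\ve)-{\cal S}$, and Corollary~\ref{corol2.3} will imply that at each point $q\in \B(p,\ve)\cap {\cal S}$ at most one noncompact component has $q$ in its closure, that this component has a single end at $q$, and that any finite-genus end would extend smoothly across $q$. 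Combined with Proposition~\ref{propos1}, which prohibits two distinct leaves of $\overline{\cal L}$ from sharing a singular leaf point, this will yield items~4.1 and~4.3, and a connectedness argument in $\overline{\B}(p,\ve)-{\cal S}$ will give the amalgamation statement~4.2.

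For item~6, I will assume $L_1$ has finite genus; item~5.2(c), which asserts infinite genus for $L_1$ in case~5.2, then forces case~5.1 to apply: $L$ is proper in $\R^3$, is the unique leaf of $\overline{\cal L}$, and ${\cal S}={\cal S}_{L_1}\subset \overline{L_1}$. Baire's theorem applied to the closed countable set ${\cal S}$ supplies a dense subset of isolated points of ${\cal S}$, and at each such point the final statement of Corollary~\ref{corol2.3} will extend $\overline{L_1}$ smoothly across that singularity using the finite-genus hypothesis. A Cantor--Bendixson iteration along the derived sets of ${\cal S}$ should then eliminate every singularity in countably many transfinite steps, ultimately yielding that $L=L_1$ is smooth and properly embedded in $\R^3$ and that ${\cal S}=\mbox{\O}$. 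The main technical obstacle will be this iterative removal in item~6, where at each transfinite stage one must verify that the finite-genus and properness hypotheses persist so that Corollary~\ref{corol2.3} can be reapplied; a parallel subtlety in item~4 is showing that the finitely many noncompact components across the various singular points in $\overline{\B}(p,\ve)\cap {\cal S}$ amalgamate into a single leaf of $\overline{\lc}$.
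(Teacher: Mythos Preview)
Your overall strategy --- realizing the given singular lamination as the limit of an explicit sequence $\{M_n\}_n$ and then invoking Theorem~\ref{tttt} as a black box --- differs from the paper's. The paper does not construct any sequence; instead it observes that the arguments proving items~1, 2, 3 and~6 of Theorem~\ref{tttt} depend only on the limit object $\overline{\cal L}$ and the countability of its singular set, not on the approximating surfaces $M_n$, and it simply repeats those arguments in the present setting (Lemmas~\ref{lemma4.1bis}, \ref{lemma4.2} and Proposition~\ref{propos5.1}). Your reduction is legitimate and buys you the ability to quote Theorem~\ref{tttt} rather than retrace its proof; the cost is that you must actually build the sequence. Your handling of item~4 is essentially the paper's, except that the paper obtains~4.2 by shrinking $\ve$ (using Proposition~\ref{propos1} to see that only one component of ${\cal L}(p,\ve)$ contains $p$ in its closure) rather than by a separate connectedness argument.

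On the construction of $M_n$: you need more than ``compact pieces that exhaust each leaf''. To guarantee locally positive injectivity radius in $A$, the piece of the leaf through $q_n$ freshly added at stage $n$ must already be large near every compact subset of $A$ visited so far; otherwise a fixed point $x\in A$ could see arbitrarily small boundary-touching components of $M_n$ for infinitely many $n$. A clean fix is to choose a compact exhaustion $K_1\subset K_2\subset\cdots$ of $A=\R^3-{\cal S}$ with each $\partial K_j$ transverse to ${\cal L}$, and set $M_n=\bigcup_{k\leq n}(L_{q_k}\cap K_n)$; then for $n$ large every component of $M_n$ meeting a fixed lamination chart about $x$ is a full plaque, and local simple connectivity follows from the lamination structure of ${\cal L}$.

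For item~6 you are working harder than necessary, and the ``technical obstacle'' you flag is not real. Once items~4 and~5 are in hand, finite genus of $L_1$ rules out case~5.2 by~5.2(c), so case~5.1 holds and ${\cal P}=\mbox{\rm \O}$. Then every $p\in{\cal S}$ lies in ${\cal S}-\bigcup_{P\in{\cal P}}P$, and item~4.3 forces an infinite-genus end of $L_1$ at $p$, contradicting the hypothesis; hence ${\cal S}=\mbox{\rm \O}$ in one step --- this is exactly what the paper means by ``item~6 follows directly from item~5''. Even staying with your Baire approach, no transfinite recursion is needed: if every isolated point of ${\cal S}$ is removable by Corollary~\ref{corol2.3}, then ${\cal S}$ has no isolated points, but a nonempty closed countable subset of $\R^3$ always has an isolated point; so ${\cal S}=\mbox{\rm \O}$ immediately.
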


As in the previous section, we will divide the proof of Theorem~\ref{global} into several lemmas.


\begin{lemma}
\label{lemma4.1bis}
Items 1, 2 and 3 of Theorem~\ref{global} hold.
\end{lemma}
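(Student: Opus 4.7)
The plan is to mirror the strategy of Lemma~\ref{lemma4.1} and Lemma~\ref{lemma3.4}, in fact in a simpler form because here we start directly with a possibly singular minimal lamination instead of a subsequential limit: there is no separate singular set of convergence to track, and ${\cal S}$ plays the joint role of ${\cal S}^A\cup W$ from Theorem~\ref{tttt}. Throughout I will use freely that the related regular lamination ${\cal L}$ of $\R^3-{\cal S}$ has countable closed complement ${\cal S}$, so Corollary~\ref{corrs} applies with $W={\cal S}$.

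For Item~1, the closure $\overline{\lc}$ is closed in $\R^3$, so any limit $P$ of a sequence $\{P_n\}_n\subset{\cal P}$ of planes is a plane contained in $\overline{\lc}$. Since ${\cal S}$ is countable, the complement $P-{\cal S}$ is connected and, by the local lamination structure of ${\cal L}$ in $\R^3-{\cal S}$, is covered by leaves of ${\cal L}$; connectedness forces $P-{\cal S}$ to be a single leaf of ${\cal L}$. Hence $P=(P-{\cal S})\cup(P\cap{\cal S})$ is a leaf of $\overline{\lc}$ and $P\in{\cal P}$.

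For Item~2, closedness of $\mbox{\rm Lim}(\overline{\cal L})$ will follow from the standard double-limit argument for the regular lamination ${\cal L}$ of $\R^3-{\cal S}$. For the inclusion $\mbox{\rm Lim}(\overline{\cal L})\subset{\cal P}$: given a limit leaf $L=L_1\cup{\cal S}_{L_1}$ of $\overline{\cal L}$, the two-sided cover of the limit leaf $L_1\in{\cal L}$ is stable by the stable limit leaf theorem~\cite{mpr19,mpr18}. Corollary~\ref{corrs} then implies that the closure of $L_1$ extends across ${\cal S}\cap\overline{L_1}$ to a complete, stable minimal surface in $\R^3$, which must be a plane; this extension coincides with $L$, so $L\in{\cal P}$.

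For Item~3, I fix $\de>0$ so that $P(2\de)$ is disjoint from the closed set $\mbox{\rm Lim}(\overline{\cal L})$. If $P(\de)\cap\overline{\lc}$ contained a portion of a leaf $L'\neq P$, then $L'\cap P(\de)$ would be proper in $P(\de)-{\cal S}$, and any point of $L'\cap P$ would be forced by the maximum principle to lie in ${\cal S}$ and to be a singular leaf point of both $P$ and $L'$; Proposition~\ref{propos1} rules this out, so $L'\cap P=\mbox{\rm \O}$. A contradiction then arises from the catenoid-barrier proof of the halfspace theorem of Hoffman--Meeks~\cite{hm10}, adapted to the countable singular set via Proposition~\ref{propos1} exactly as in Lemma~\ref{lemma3.4}. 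Finally, to prove ${\cal S}\cap[{\cal P}-\mbox{\rm Lim}(\overline{\cal L})]=\mbox{\rm \O}$, any alleged $p\in{\cal S}\cap P$ would, by the just-established identity $P(\de)\cap\overline{\lc}=P$, have a neighborhood in $\overline{\lc}$ consisting only of the plane $P$ (punctured at $p$ inside ${\cal L}$); since $|\sigma_{\cal L}|\cdot d\equiv 0$ for a plane, the local removable singularity theorem~\ref{tt2} forces the singularity at $p$ to be removable, contradicting $p\in{\cal S}$.

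The main technical obstacle will be the careful adaptation of the halfspace argument in Item~3 to the singular setting: Proposition~\ref{propos1} is needed both to force disjointness of $L'$ and $P$ and to ensure that the area-minimizing barrier surface constructed between $L'$ and $P$ extends smoothly across the countably many singular points via Corollary~\ref{corrs}. Everything else is essentially a reorganization of the arguments already used in the proof of Theorem~\ref{tttt}.
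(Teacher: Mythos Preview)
Your proof is correct and follows essentially the same approach as the paper's own proof: Item~1 via limits of planes, Item~2 via the stable limit leaf theorem combined with Corollary~\ref{corrs} (exactly as in the second paragraph of Lemma~\ref{lemma4.1}), and Item~3 via the properness-plus-Proposition~\ref{propos1}-plus-halfspace argument from Lemma~\ref{lemma3.4}. Your treatment is in fact slightly more detailed in places---you justify why the limit plane is a single leaf of $\overline{\lc}$ in Item~1, and you spell out the ``In particular'' clause of Item~3 using Theorem~\ref{tt2}---whereas the paper handles these points more tersely or leaves them implicit.
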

\begin{proof}
The proof of item~1 of Theorem~\ref{global} is the same as the one of item~1
of Theorem~\ref{tttt}. As for the proof of item~2 of Theorem~\ref{global}, the second paragraph
of the proof of Lemma~\ref{lemma4.1} can be applied without changes to show that if
$L=L_1\cup {\cal S}_{L_1}$ is a limit leaf of $\overline{\lc}$, then the two-sided cover of
$L_1$ is stable. By items~1 and  2 of Corollary~\ref{corrs}, $L_1$
extends across the countable set ${\cal S}$ to a plane. Hence,
$L$ is a plane itself. Since the limit of planes in Lim$(\overline{\cal L})$
is clearly a limit leaf of ${\cal L}$, it follows that Lim$(\overline{\cal L})$
is closed in $\R^3$. These observations prove item~2 of Theorem~\ref{global}.

Next we prove item~{3} of Theorem~\ref{global}. The argument will be similar to that in the
proof of Lemma~\ref{lemma3.4}. Let $P$ be a plane in
${\cal P}-\mbox{Lim}(\overline{\cal L})$. By item~2 of Theorem~\ref{global}, we can
choose $\de >0$ such that the $2\de $-neighborhood
$P(2\de )$ of $P$ does not intersect $\cup _{P'\in {\cal P}-\{ P\} }P'$.
Suppose, arguing by contradiction, that the closed
slab $\overline{P(\de )}$ intersects $\overline{\cal L}$ in a
portion of a leaf $L$ of $\overline{\cal L}$
different from $P$. Note that as a set, $L\cap \overline{P(\de )}$
is proper in $\overline{P(\de )}$
(otherwise we produce a limit leaf of $\overline{\cal L}$ in
$\overline{P(\de )}$, hence a plane which cannot be $P$,
as $P\notin \mbox{Lim}(\overline{\cal L})$). Proposition~\ref{propos1}
implies that $L$ is disjoint from $P$. Now, a standard application
of the proof of the halfspace theorem~\cite{hm10})
using catenoid barriers still works in this setting to
obtain a contradiction to the existence of $L$, thereby finishing
the proof of Lemma~\ref{lemma4.1bis}.
\end{proof}

\begin{lemma}
\label{lemma4.2}
Item 4 of Theorem~\ref{global} holds.
\end{lemma}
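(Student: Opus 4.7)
The plan is to prove item~4 of Theorem~\ref{global} in four steps, closely paralleling the proof of Lemma~\ref{lemma3.3} (item~5 of Theorem~\ref{tttt}).

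First I would fix $\ve > 0$. Since ${\cal P}$ is closed (item~1) and $p \notin \bigcup _{P\in {\cal P}}P$, there exists $\ve _0>0$ such that $\overline{\B}(p,2\ve _0)\cap \bigcup _{P\in {\cal P}}P = \mbox{\rm \O}$. Since ${\cal S}$ is countable, I can further require $\ve\in (0,\ve _0)$ with ${\cal S}\cap \esf ^2(p,\ve )=\mbox{\rm \O}$. Finally, by Sard's theorem applied to the distance function $|\cdot -p|$ restricted to each leaf of ${\cal L}$, for almost every such $\ve $ the sphere $\esf ^2(p,\ve )$ meets each leaf of ${\cal L}$ transversely.

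For item~4.1, I would show that $\lc (p,\ve )$ has only finitely many components, each a properly embedded smooth minimal surface in $\overline{\B}(p,\ve )-{\cal S}$ with compact boundary in $\esf ^2(p,\ve )$. Properness, smoothness, and the compact boundary condition follow from the leaf structure of ${\cal L}$ together with transversality. To establish finiteness I would argue by contradiction: infinitely many components would accumulate at some point $x\in \overline{\B}(p,\ve )$. If $x\notin {\cal S}$, then $x$ would lie on a limit leaf of ${\cal L}_1$, hence on a plane of ${\cal P}$ by item~2 of Theorem~\ref{global}, contradicting the choice of $\ve $. If $x\in {\cal S}$, then by Proposition~\ref{propos1} at most one leaf of ${\cal L}$ can have $x$ as a singular leaf point; any other accumulating leaf would have $x$ as a non-singular limit point, so its closure would meet a nearby regular point on a new limit leaf of ${\cal L}_1$, again a plane of ${\cal P}$, contradicting the choice of $\ve $.

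For item~4.2 (all components lie on the same leaf of $\overline{\cal L}$), I would argue by contradiction. Suppose components $L_i,L_j\subset \lc (p,\ve )$ lie on distinct leaves $M_i\neq M_j$ of ${\cal L}$; then $\overline{\cal L}(M_i)$ and $\overline{\cal L}(M_j)$ are disjoint leaves of $\overline{\lc}$ by Proposition~\ref{propos1}. Using $L_i$ and $L_j$ as barriers to solve a Plateau-type problem in the region of $\overline{\B}(p,\ve )$ between them (exactly as in the arguments for Proposition~\ref{propos1} and Assertion~\ref{ass3.7}) produces a stable, area-minimizing minimal surface $\Sigma $ complete outside the closed countable set ${\cal S}\cap \overline{\B}(p,\ve )$. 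By Corollary~\ref{corrs}, $\Sigma $ extends smoothly across ${\cal S}$ to a stable minimal surface; combined with curvature estimates and the singular halfspace theorem (as used in Lemma~\ref{lemma3.4}), this forces the existence of a plane in $\overline{\B}(p,2\ve )\cap \overline{\cal L}$, contradicting the choice of $\ve $.

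For item~4.3, I would combine Proposition~\ref{propos1} with Corollary~\ref{corol2.3}. For each $q\in \B (p,\ve )\cap {\cal S}$, the argument of step~4.1 shows that any leaf of ${\cal L}$ having $q$ in its closure must have $q$ as a singular leaf point; Proposition~\ref{propos1} then gives at most one such leaf, and Corollary~\ref{corol2.3} identifies the unique component $L_q$ of $\lc (p,\ve )$ limiting to $q$. If $q$ is isolated in ${\cal S}$ and $L_q$ had finite genus near $q$, the last statement in Corollary~\ref{corol2.3} would extend $L_q$ smoothly across $q$, contradicting $q\in {\cal S}$; for non-isolated $q$, Baire's theorem supplies a sequence of isolated $q_k\to q$ in ${\cal S}$ where infinite genus already holds, and the property propagates. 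The last sentence (isolated $p$) then follows by choosing $\ve $ with ${\cal S}\cap \overline{\B}(p,\ve )=\{ p\} $: item~4.2 places $\lc (p,\ve )$ inside the leaf of ${\cal L}$ containing $L_p$, and Corollary~\ref{corol2.3} yields that $L_p$ has infinite genus and exactly one end. The main obstacle in this plan is item~4.2, namely the barrier-and-extension step: the challenging points are choosing appropriate boundary data for the Plateau problem, verifying via Corollary~\ref{corrs} that the stable minimizer passes safely through the singularities, and then identifying the extended surface as a plane of ${\cal P}$ inside $\overline{\B}(p,2\ve )$ to contradict the choice of $\ve $; the technical template is provided by the proofs of Proposition~\ref{propos1}, Assertion~\ref{ass3.7}, and Lemma~\ref{lemma3.4}.
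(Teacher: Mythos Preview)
Your approach to item~4.2 has a genuine gap. You propose to use $L_i$ and $L_j$ as barriers in $\overline{\B}(p,\ve)$ to build a stable area-minimizing surface $\Sigma$, extend it across ${\cal S}$ via Corollary~\ref{corrs}, and then invoke a halfspace-type argument to force a plane of ${\cal P}$ to appear in $\overline{\B}(p,2\ve)$. But the surface $\Sigma$ you construct lives in a ball and carries boundary on $\esf^2(p,\ve)$; it is \emph{not} complete outside ${\cal S}$, so neither Corollary~\ref{corrs} (which concerns laminations of $N-W$, not surfaces with boundary) nor any form of the halfspace theorem applies to identify it as a plane. In Assertion~\ref{ass3.7} the analogous argument works only because $L$ and $L'$ are proper in an entire slab or halfspace $C(L)$, so the stable separator is complete outside a countable set and hence a plane; that global structure is exactly what is missing here.

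The paper's route for 4.2 is far simpler and avoids barriers entirely. Since $\overline{\B}(p,2\ve')$ misses ${\cal P}$ and hence $\mathrm{Lim}(\overline{\cal L})$, every component of $\lc(p,\ve)$ is proper in $\overline{\B}(p,\ve)-{\cal S}$; thus $p$ is a singular leaf point of any component that has $p$ in its closure, and Proposition~\ref{propos1} guarantees there is at most one such component, say $C(p,\ve)$. The remaining finitely many components have compact closures not containing $p$ and therefore sit at positive distance from $p$. Shrinking $\ve$ below that distance forces $\lc(p,\ve_1)\subset C(p,\ve)$, which lies in a single leaf of $\overline{\lc}$. This is the key idea you are missing.

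Your treatment of 4.3 for non-isolated $q$ is also too loose. Saying ``infinite genus at isolated $q_k\to q$ and the property propagates'' does not work, because the $q_k$ need not lie in the closure of the component $L_q$ whose end at $q$ is under consideration. The paper instead assumes the end at $q$ has finite genus, takes a finite-genus proper subdomain $\Omega'\subset \overline{\B}(q,\de)$ containing that end, and applies Baire to the closed countable set $\overline{\Omega'}\cap{\cal S}$ (not to ${\cal S}$ itself). An isolated point $q'$ of $\overline{\Omega'}\cap{\cal S}$ yields a finite-genus end of $\Omega'$ properly embedded in a punctured ball about $q'$, and now Corollary~\ref{corol2.3} extends it smoothly across $q'$, contradicting $q'\in{\cal S}$.
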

\begin{proof}
Take a point $p\in {\cal S}- \bigcup _{P\in {\cal P}}P$. By item~1 of Theorem~\ref{global},
we can choose $\ve '>0$ small
enough so that $\overline{\B} (p,3\ve ')$ does not intersect
${\cal P}$; hence $\overline{\B} (p,3\ve ')$ does not intersect
$\mbox{Lim}(\overline{\cal L})$ as
well, by item~2 of Theorem~\ref{global}.
This last property implies that leaves of ${\cal L}$ are proper in
$\overline{\B }(p,2\ve ')$. Consider a number $\ve \in (\ve ',2\ve ')$
such that the sphere$\esf^2(p,\ve )$
is at a positive distance from ${\cal S}$ and is transverse to $\cL$.
Therefore, $\esf^2(p,\ve )$ intersects ${\cal
L}$ in a finite number of smooth closed curves. Since every
component of ${\cal L}(p,\ve )={\cal L}\cap \overline{\B }(p,\ve )$
intersects $\esf^2(p,\ve )$ (a leaf $L_1$ of ${\cal L}(p,\ve )$
completely contained in $\B (p,\ve )$ would contradict Proposition~\ref{propos1}
applied to $L_1$ and to a plane passing through a point in $L_1$ at
maximum distance from $p$),
then we conclude that
item~4.1 of Theorem~\ref{global} holds.

To prove item~4.2,
note that as all of the components of ${\cal L}(p,\ve )$ are proper
as sets in $\overline{\B }(p,\ve )-{\cal S}$, then
$p$ is a singular leaf point of any leaf of $\overline{L}\cap
\overline{\B }(p,\ve )$ that has $p$ in its closure.
 By Proposition~\ref{propos1}, only one of the components
 of ${\cal L}(p,\ve )$, say $C(p,\ve )$, has $p$ in its closure.
Hence, we can reduce $\ve $ to $\ve_1>0$ so that
${\cal L}(p,\ve _1)\subset C(p,\ve )$ and item~4.2 is proved.

Regarding item~4.3, its first statement follows from Proposition~\ref{propos1}.
Recall that if $e$ is an end of a noncompact surface $\Sigma $ and $\a\colon [0,1)\to \Sigma$
is a proper arc representing $e$, then $e$ has infinite genus if
every proper subdomain $\Omega \subset \Sigma$ with compact boundary
that contains the end of $\a$, has infinite genus. To prove the second statement
in item~4.3 we argue by contradiction:
take $q\in \B(p,\ve)\cap \cS$ and let $\Sigma $
be the component of $\cL(p,\ve)$ that contains the point $q$.
Suppose that $\a$ is a proper arc representing the end of
 $\Sigma $ corresponding to $q$, such that $\Sigma $ contains a
 proper subdomain $\Omega $ with finite genus
 and compact boundary, in such a way that the end of $\a$ is contained in $\Omega$.
Choose $\de\in (0,\ve)$ sufficiently small
so that $\partial \Omega $ lies outside
$\ov{\B}(q,\de)\subset \ov{\B}(p,\ve)$ and $\partial \ov{\B}(q,\de)$ is
transverse to $\Sigma $. Let $\Omega'$ be the component of
$\Omega\cap \ov{\B}(q,\de)$ that contains the
end of $\a$. Since  $\Omega'$  is properly embedded in
$\ov{\B}(q,\de)-\cS$, then the set of points $ \ov{\Omega'}\cap \cS$
is a nonempty closed countable subset of
$\ov{\B}(q,\de)$.  Baire's Theorem implies
that the set of isolated singularities in
$ \ov{\Omega'}\cap \cS$ is dense in $ \ov{\Omega'}\cap \cS$.
But Corollary~\ref{corol2.3} applied around an isolated
singularity of $ \ov{\Omega'}\cap \cS$ in $\B (q,\de )\cap {\cal S}$
gives a contradiction since $\Omega'$ has finite genus.
This contradiction completes the proof of the second statement in item~4.3.
Finally, the last statement in item~4.3 is a consequence of the previously proved parts
of this item. Now the lemma holds.
\end{proof}

\begin{proposition}
\label{propos5.1}
Items 5, 6 of Theorem~\ref{global} hold (and so, the proof of Theorem~\ref{global} is complete).
\end{proposition}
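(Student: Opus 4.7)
The plan is to follow the template of Proposition~\ref{propos3.5}, which establishes the analogous statements (item~6 of Theorem~\ref{tttt}) in the setting of a limit lamination; in the present abstract setting the proof is in fact somewhat simpler, since there is no singular set of convergence $S(\cL)$ to manage and no multiplicity of convergence to handle. The principal tools remain Proposition~\ref{propos1} (at most one leaf of $\ov{\cL}$ can share a given singular leaf point), Theorem~\ref{tt2} and Corollary~\ref{corrs} (local and global removable singularity for stable laminations across countable sets), Corollary~\ref{corol2.3} (smooth extension of properly embedded finite-genus minimal surfaces across isolated singularities), the proof of the (strong) halfspace theorem~\cite{hm10,msy1} adapted to the countably singular setting via Proposition~\ref{propos1}, and Corollary~1 of~\cite{mpr3} (every non-planar leaf of a regular minimal lamination of $\R^3$ with more than one leaf has infinite genus).

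To prove item~5 I would split according to whether $L$ is proper in $\R^3$. If $L$ is proper, the argument of case~(E1) in Proposition~\ref{propos3.5} carries over without change: Proposition~\ref{propos1} combined with the halfspace theorem using catenoid barriers forces ${\cal P} = \mbox{\rm \O}$; any second leaf $L'$ of $\ov{\cL}$ would also be non-planar, hence proper in $\R^3$ (otherwise $\ov{L'}$ would contain a limit leaf, and hence a plane, contradicting ${\cal P} = \mbox{\rm \O}$) and disjoint from $L$ (by Proposition~\ref{propos1} plus the maximum principle), and the proof of the strong halfspace theorem adapted via Proposition~\ref{propos1} would then give a contradiction. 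If $L$ is not proper, I would first establish, as in Assertion~\ref{ass3.6}, that through every limit point of $L$ passes a plane in ${\cal P}$: a limit point in $\cL$ is contained in a limit leaf and hence in a plane by item~2; a limit point $q \in (\ov{L_1} \cap \cS) \setminus \cS_{L_1}$ can be realized as an extrinsic but not intrinsic limit of an auxiliary sequence in $L_1$, reducing to the previous case. A connectedness argument then yields $\ov{L} = L \cup {\cal P}(L)$ with ${\cal P}(L)$ consisting of one or two planes bounding the component $C(L)$ of $\R^3 - {\cal P}(L)$ that properly contains $L$. The equality $C(L) \cap \ov{\cL} = L$ follows exactly as in Assertion~\ref{ass3.7}: any hypothetical second leaf $L' \subset C(L)$ is nonflat and disjoint from $L$ by Proposition~\ref{propos1}, and an area-minimizing surface constructed between $L$ and $L'$ extends across the countable set $\cS \cap C(L)$ via Corollary~\ref{corrs} to yield a complete stable minimal surface in $\R^3$, hence a plane, contradicting that $L$ and $L'$ lie on the same side of every plane in $C(L)$. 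The unbounded curvature statement 5.2(a) transfers verbatim from Assertion~\ref{ass3.88}, using Theorem~\ref{tt2} to extend $L_1(\ve) \cup P$ to a minimal lamination of a two-sided slab with bounded curvature and then applying Lemma~1.4 of~\cite{mr8} to contradict the halfspace theorem.

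For item 5.2(b), I would argue that if $P(\ve) \cap L_1$ has finite genus, then the closed countable set $\cS \cap P(\ve) \cap \ov{L}$ must be empty: otherwise Baire's Theorem would produce an isolated point $q$ in this set, around which Corollary~\ref{corol2.3} would extend $\ov{L_1}$ smoothly, contradicting $q \in \cS$. Item 5.2(c) then follows by contradiction: if $L_1$ had finite genus, 5.2(b) would exclude singularities of $\ov{L}$ on ${\cal P}(L)$, and the same Baire--Corollary~\ref{corol2.3} argument applied in $C(L)$ would remove all remaining singularities; the resulting closure $\ov{L}$ would be a regular minimal lamination of $\R^3$ with at least two leaves ($L_1$ and a plane in ${\cal P}(L)$) containing a non-planar leaf of finite genus, contradicting Corollary~1 of~\cite{mpr3}. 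Item~6 is then an immediate consequence: if $L_1$ has finite genus, 5.2(c) rules out case 5.2, so $L$ is proper and unique by 5.1, and one further application of Baire to the closed countable set $\cS \cap L$ combined with Corollary~\ref{corol2.3} gives $\cS \cap L = \mbox{\rm \O}$; since $L$ is the unique leaf, this forces $\cS = \mbox{\rm \O}$ and $L = L_1$ is a smooth, properly embedded minimal surface in $\R^3$. The main technical point to verify is that the Baire--Corollary~\ref{corol2.3} arguments are rigorous — specifically that finite genus of $L_1$ is inherited correctly by small balls around isolated singularities, and that the arguments transferred from Proposition~\ref{propos3.5} do not covertly depend on the sequence of surfaces $\{M_n\}$ present there but absent here.
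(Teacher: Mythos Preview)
Your outline matches the paper's approach closely for item~5.1, for the main statement of~5.2, and for~5.2(a); the paper indeed says that the arguments of case~(E1), Assertion~\ref{ass3.6}, Assertion~\ref{ass3.7}, and Assertion~\ref{ass3.88} carry over with only cosmetic changes, exactly as you describe.

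The gap is in your treatment of item~5.2(b). You propose to show ${\cal S}\cap P(\ve)\cap\ov{L}=\mbox{\O}$ by Baire plus Corollary~\ref{corol2.3}. This works for an isolated singularity $q\in C(L)\cap P(\ve)$, since $L$ is proper in $C(L)$ and hence $L_1\cap\ov{\B}(q,r)$ is properly embedded in $\ov{\B}(q,r)-\{q\}$ for small $r$. But it fails when $q$ lies on the limiting plane $P\in{\cal P}(L)$ itself: near such a $q$, the surface $L_1$ accumulates on $P$, so $L_1\cap\ov{\B}(q,r)$ is \emph{not} properly embedded in $\ov{\B}(q,r)-\{q\}$, and the hypotheses of Corollary~\ref{corol2.3} are not met. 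Ruling out singularities of $\ov{L}$ on $P$ is precisely the hard case.

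The paper handles this by invoking Assertion~\ref{ass3.8} in full, noting that its entire proof --- the dichotomy (E2-A)/(E2-B) on whether $I_{L_1}/|\cdot|$ is bounded below near an isolated point of ${\cal S}'\subset{\cal S}\cap P$, the local picture theorem on the scale of topology~\cite{mpr14}, the flux arguments, the rescaling analysis leading to Claims~\ref{claim3.9} and~\ref{claim3.11}, and the final contradiction via Corollary~1.2 of~\cite{cm26} --- uses only the leaf $L_1$ and the lamination $\ov{L}$, never the approximating sequence $\{M_n\}$. So it transfers verbatim to the abstract setting of Theorem~\ref{global}. Your shortcut bypasses exactly this machinery, and there is no elementary replacement: a singularity on $P$ with $L_1(\ve)$ of finite genus is ruled out only by the delicate arguments of Assertion~\ref{ass3.8}. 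Once~5.2(b) is established correctly, your derivations of~5.2(c) and item~6 are fine and coincide with the paper's.
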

\begin{proof}
Suppose that $L=L_1\cup {\cal S}_{L_1}$ is a leaf of ${\cal L}$ not contained in ${\cal P}$
(with the notation of Theorem~\ref{global}). Following the reasoning in the proof of
Proposition~\ref{propos3.5}, we will distinguish two cases,
depending on whether or not $L$ is proper as a set in $\R^3$. If
$L$ is proper in $\R^3$, then the arguments in
case (E1) of the proof of Proposition~\ref{propos3.5}
are now valid and prove that item~5.1 of Theorem~\ref{global} holds.

Now assume that $L$ is not proper in $\R^3$, and we will deduce that
item~5.2 of Theorem~\ref{global} holds. As before, we will only comment on how to adapt
the arguments in (E2) of the proof of Proposition~\ref{propos3.5} to our current setting.
The property that through every limit point of $L$ there passes a plane in ${\cal P}$
(that is, Assertion~\ref{ass3.6})
follows verbatim, with the only change of ${\cal L}_1$ by ${\cal L}$ in the proof of
Assertion~\ref{ass3.6}. This implies that $\overline{L}=L\cup {\cal P}(L)$ with ${\cal
P}(L)\subset {\cal P}$ consisting of one or two planes, and $L$ is
proper in the component $C(L)$ of $\R^3-{\cal P}(L)$ that contains $L$.
Assertion~\ref{ass3.7} also holds in our new setting, with the only change in its
proof occurring when demonstrating
the countability of the set of points of $\overline{\Sigma }$ where the
least-area surface
$\Sigma $ is possibly incomplete, which is easier now as this set is clearly contained in
the countable set ${\cal S}\cap [L\cup L'\cup
{\cal P}(L)]$). Assertion~\ref{ass3.88}
also holds true now, with the only change in its proof that
incompleteness of the surface $L_1(\ve )=L_1\cap \{ 0<x_3\leq \ve \} $ (we assume
the same normalization as at the beginning of the proof of Assertion~\ref{ass3.88})
may fail at the set ${\cal S}\cap \{ 0\leq x_3\leq \ve \} $, which is countable.
Hence, the proof of the main statement of item~5.2 and item~5.2(a) of
Theorem~\ref{global} are proved.

Assertion~\ref{ass3.8} and its proof are valid in our current setting without changes,
as all their arguments rely on the limit singular lamination $\overline{\cal L}$ of $\R^3$
and not in the sequence of minimal surfaces $\{ M_n\} _n$ that appear in the statement of
Theorem~\ref{tttt}. Therefore, items~5.2(b) and~5.2(c) of Theorem~\ref{global} are also proved.

Finally, item~6 of Theorem~\ref{global} follows
directly from item~{5} of the same theorem.
\end{proof}

\section{A convergence result for embedded minimal surfaces of uniformly bounded genus.}
\label{sec7}


\begin{theorem}
\label{thmcm} Suppose $\{M_n\}_n$ is a sequence of compact, embedded
minimal surfaces of finite genus at most $g\in \N\cup \{ 0\}$, with $\partial M_n
\subset \esf^2(n)$ for each $n$. Suppose that some subsequence
of disks $\{D_n \subset M_n\}_n$ converges $C^2$ to a nonflat
minimal disk. Then, a subsequence of the $M_n$ converges smoothly on
compact subsets of $\R^3$ with multiplicity one to a connected,
properly embedded, nonflat minimal surface $M_{\infty}\subset \R^3$
of genus at most $g$, that is either a surface of finite total
curvature, a helicoid with handles or a two-limit-ended minimal
surface. Furthermore, $M_{\infty }$ has bounded Gaussian curvature.
\end{theorem}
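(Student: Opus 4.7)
The plan is to reduce Theorem~\ref{thmcm} to Theorem~\ref{tttt} and then use the bounded genus hypothesis together with the nonflat disk limit to eliminate all but the desired case in items~6 and~7 of Theorem~\ref{tttt}.

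First I would verify that after passing to a subsequence, $\{M_n\}_n$ has locally positive injectivity radius in $A=\R^3-W$ for some closed countable set $W\subset \R^3$ (which will ultimately be forced to be empty). This is a preparatory step: failure of locally positive injectivity radius at a point can arise either from the formation of a Colding-Minicozzi disk-type blow-up curve or from the collapse of a homotopically nontrivial loop. Lemma~3.3 in~\cite{mpr14} bounds the number of such disk-type blow-up curves in terms of the genus, and the uniform bound $g$ on the genus of $M_n$ similarly constrains the handle-collapse points to a locally finite set in any ball. A diagonal argument then produces the desired $W$, allowing the application of Theorem~\ref{tttt}.

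Theorem~\ref{tttt} produces a possibly singular minimal lamination $\overline{{\cal L}}$ of $\R^3$ with singular set ${\cal S}$ and singular set of convergence $S({\cal L})$. Because $D_n\to D_\infty$ in $C^2$ and $D_\infty$ is nonflat, $\overline{{\cal L}}$ contains a nonflat leaf $L=L_1\cup {\cal S}_{L_1}$ passing through $D_\infty$. I next invoke item~7 of Theorem~\ref{tttt}: were ${\cal S}\cup S({\cal L})\neq \mbox{\rm \O}$, the bounded genus hypothesis would force $\overline{{\cal L}}={\cal P}$, contradicting the nonflatness of $L$. Hence ${\cal S}\cup S({\cal L})=\mbox{\rm \O}$, so $\overline{{\cal L}}$ is a regular minimal lamination and the convergence is smooth on compact subsets of $\R^3-W$. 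Item~6 of Theorem~\ref{tttt} applied to $L$ gives two cases: case 6.2 is ruled out because it would force $L_1$ to have infinite genus, while the multiplicity-one convergence asserted in item~6 combined with the genus bound on $M_n$ limits the genus of $L_1$ to at most $g$. Thus case 6.1 holds: $L$ is the unique leaf of $\overline{{\cal L}}$ and is proper in $\R^3$. Combined with ${\cal S}=\mbox{\rm \O}$ and the smoothness of $L$, the continuity of the injectivity radius function under smooth limits forces $W=\mbox{\rm \O}$ as well (any residual point of $W$ on $L$ would violate smoothness, and any point off $L$ would be disjoint from the $M_n$ for $n$ large). Consequently $M_\infty:=L$ is a connected, properly embedded, nonflat minimal surface of genus at most $g$, and $M_n\to M_\infty$ smoothly on compact subsets of $\R^3$ with multiplicity one.

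Finally, I would apply the classification of properly embedded minimal surfaces of finite genus in $\R^3$. If $M_\infty$ has finite topology, Collin's theorem (in the multi-ended case) and the Meeks-Rosenberg type uniqueness results for one-ended finite-genus examples yield that $M_\infty$ is either of finite total curvature or a helicoid with handles. If $M_\infty$ has infinite topology, then the finite-genus structure results of Meeks-P\'erez-Ros imply that $M_\infty$ has exactly two limit ends, giving a two-limit-ended minimal surface. Bounded Gaussian curvature follows in each case from the corresponding curvature estimates. The principal obstacle is the initial verification of the hypotheses of Theorem~\ref{tttt}: the bounded-genus constraint on the Colding-Minicozzi blow-up structure must be used carefully to ensure that $W$ can be taken to be a closed countable set; the subsequent elimination of cases in items~6 and~7 follows more directly once Theorem~\ref{tttt} is in force.
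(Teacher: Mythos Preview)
Your route through Theorem~\ref{tttt} is substantially different from the paper's argument, and the difference matters because your first step carries a real gap while the paper's approach sidesteps it entirely.

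The paper does not attempt to verify locally positive injectivity radius anywhere. Instead it invokes Colding--Minicozzi's compactness result (Theorem~0.6 in~\cite{cm25}) directly: if the curvature of the $M_n$ blew up on some ball, a subsequence would converge (off a singular set) to a lamination by \emph{parallel planes}, which is incompatible with the $C^2$ limit disk $D_\infty$ being nonflat. Hence the $M_n$ have curvature uniformly bounded on compact sets, a subsequence converges to a regular minimal lamination of $\R^3$, and the conclusion then follows from the structure theorem for such laminations (Theorem~1.6 in~\cite{mr8}), multiplicity one via Lemma~A.1 in~\cite{mr13}, Corollary~1 in~\cite{mpr3} for uniqueness and properness of the nonflat leaf, and Theorem~1 in~\cite{mpr4} for the trichotomy and bounded curvature.

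Your preparatory step---producing a closed countable $W$ outside of which $\{M_n\}$ has locally positive injectivity radius---is not justified by the sketch you give. Bounded genus does \emph{not} by itself force the set of injectivity-radius collapse points to be locally finite or countable: genus-zero sequences can develop short nontrivial loops that accumulate (think of Riemann-type configurations with necks clustering). Lemma~3.3 in~\cite{mpr14} controls handedness and multiplicity of columns in a parking-garage picture, but it does not give you countability of $W$ a priori. To make this step rigorous you would essentially need the Colding--Minicozzi structure theory that the paper uses anyway, at which point you already have locally bounded curvature and Theorem~\ref{tttt} is superfluous. The remainder of your argument (eliminating items~7 and~6.2 via the nonflat disk and the genus bound, then invoking the finite-genus classification) is correct in outline, but it reproves in a heavier framework what the paper obtains directly from~\cite{mr8,mpr3,mpr4} once the limit lamination is known to be regular.
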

\begin{proof}
Suppose for the moment that there exists $R>0$ such that
\[
\sup _{M_n\cap \B (R)}|K_{M_n}|\to \infty \mbox{ as }n\to \infty.
\]
By Theorem~0.6 in~\cite{cm25} (see also Footnote 3 in the statement of
that theorem), then after a rotation in $\R^3$, there exists a subsequence
of these compact minimal surfaces, also denoted by $\{ M_n\} _n$,
a lamination ${\cal L}_1=\{ x_3=t\} _{t\in {\cal I}}$ by parallel planes
(where ${\cal I}\subset \R $ is a closed set), and a nonempty closed set $S({\cal L}_1)$
in the union of the leaves of ${\cal L}_1$ such that:
\begin{enumerate}[(N1)]
\item For each $\a \in (0,1)$, $\{ M_n-S({\cal L}_1)\} _n$ converges in the $C^{\a }$-topology
to the lamination ${\cal L}_1-S({\cal L}_1)$.
\item ${\displaystyle \sup _{M_n\cap \B(x,r)}|K_{M_n}|\to \infty }$ as $n\to \infty $ for
all $x\in S({\cal L}_1)$ and $r>0$.
\end{enumerate}

Our hypothesis that a sequence of disks
$D_n\subset M_n$ converges $C^2$ to a nonflat minimal disk contradicts that
${\cal L}_1$ consists of planar leaves. This contradiction shows that
the $M_n$ have uniformly bounded Gaussian curvatures on compact subsets of $\R^3$.
Therefore, a standard diagonal argument implies that after passing to a subsequence,
the $M_n$ converge to a (regular) minimal lamination ${\cal L}$ of $\R^3$.

By the structure theorem for (regular) minimal laminations of $\R^3$ (see Theorem~1.6
in~\cite{mr8}), the collection ${\cal P}$ of planes in ${\cal L}$
forms a possibly empty, closed set of $\R^3$, each of  the
components $X$ of $\R^3-{\cal P}$ contains at most one leaf $L_X$ of
${\cal L}$, and such a leaf $L_X$ is not flat and proper in $X$.
Since every such a $L_X$ is nonflat, its universal cover cannot
be stable and the proof of Lemma~A.1 in~\cite{mr13} implies that
the multiplicity of the convergence of portions of the $M_n$ to
$L_X$ is one. In this setting, standard lifting arguments give that
one can lift any handle on $L_X$ to a nearby handle on
an approximating surface $M_n$ for $n$
large, such that any fixed finite collection of pairwise disjoint
handles in $L_X$ lifts to a collection of disjoint handles on the
nearby surface
$M_n$. Since the genus of each $M_n$ is at most $g$,  then $L_X$
has genus at most $g$. By Corollary~1 in~\cite{mpr3}, $L_X$ is the
only leaf in ${\cal L}$ and $L_X$ is properly embedded in $\R^3$.
This proves the first item in~Theorem~\ref{thmcm} with $M_{\infty }$ being
$L_X$.

By Theorem~1 in~\cite{mpr4}, the surface $M_{\infty}$ is either a
surface of finite total curvature, a helicoid with handles or a
minimal surface with two limit ends. The same theorem states that
$M_{\infty }$ has  bounded curvature, which completes the proof of
Theorem~\ref{thmcm}.
\end{proof}

\center{William H. Meeks, III at profmeeks@gmail.com\\
Mathematics Department, University of Massachusetts, Amherst, MA
01003}
\center{Joaqu\'\i n P\'{e}rez at jperez@ugr.es \qquad\qquad Antonio Ros at aros@ugr.es\\
Department of Geometry and Topology and Institute of Mathematics
(IEMath-GR), University of Granada, 18071, Granada, Spain}

\bibliographystyle{plain}
\bibliography{bill}

\end{document}